\documentclass[12pt,reqno]{amsart}
\usepackage[margin=1in]{geometry}
\makeatletter
\def\section{\@startsection{section}{1}%
	\z@{.7\linespacing\@plus\linespacing}{.5\linespacing}%
	{\bfseries
		\centering
}}
\def\@secnumfont{\bfseries}
\makeatother
\usepackage{amsmath,amssymb,amsthm,graphicx,amsxtra, setspace}
\usepackage[utf8]{inputenc}
\usepackage{charter}
\usepackage{mathrsfs}
\usepackage{alltt}
\usepackage{stackengine}
\usepackage{relsize}
\usepackage{hyperref}
\usepackage{aliascnt}
\usepackage{tikz}
\usepackage{mathtools}
\usepackage{multicol}
\usepackage{upgreek}
\usepackage{graphicx,type1cm,eso-pic,color}
\allowdisplaybreaks
\usepackage{scalerel,stackengine}
\stackMath
\parskip 1ex
\usepackage{setspace}
\onehalfspacing

\newtheorem{theorem}{Theorem}[section]
\newtheorem*{theorem*}{Theorem}

\newaliascnt{lemma}{theorem}
\newtheorem{lemma}[lemma]{Lemma}
\aliascntresetthe{lemma} 

\newaliascnt{proposition}{theorem}
\newtheorem{proposition}[proposition]{Proposition}
\aliascntresetthe{proposition}

\newaliascnt{assumption}{theorem}

\aliascntresetthe{assumption}

\newaliascnt{auxiliary}{theorem}

\aliascntresetthe{auxiliary}

\newaliascnt{corollary}{theorem}

\aliascntresetthe{corollary}

\newaliascnt{definition}{theorem}
\newtheorem{definition}[definition]{Definition}
\aliascntresetthe{definition}

\newaliascnt{example}{theorem}

\aliascntresetthe{example}

\newaliascnt{remark}{theorem}
\newtheorem{remark}[remark]{Remark}
\aliascntresetthe{remark}

\newaliascnt{hypothesis}{theorem}

\aliascntresetthe{hypothesis}

\newaliascnt{property}{theorem}

\aliascntresetthe{property}

\usepackage[utf8]{inputenc}
\usepackage{amsmath}
\usepackage{enumerate}
\usepackage{bm}
\usepackage[T1]{fontenc}
\usepackage{amsthm}
\usepackage{amsfonts}
\usepackage{amssymb}
\usepackage{graphicx}
\usepackage{enumerate}
\usepackage[english]{babel}
\usepackage{comment}
\usepackage{amssymb}
\usepackage{amsthm}
\usepackage{xcolor}
\usepackage{dutchcal}

\usepackage[title]{appendix}
\newcommand{\RR}{\mathbb{R}}
\newcommand{\Om} {\Omega}
\newcommand{\pa} {\partial}
\newcommand{\be} {\begin{equation}}
\newcommand{\ee} {\end{equation}}
\newcommand{\bea} {\begin{eqnarray}}
\newcommand{\eea} {\end{eqnarray}}
\newcommand{\Bea} {\begin{eqnarray*}}
\newcommand{\Eea} {\end{eqnarray*}}

\usepackage{xcolor}
\newcommand{\al} {\alpha}
\newcommand{\ba} {\beta}

\newcommand{\ga} {\gamma}

\newcommand{\la} {\lambda}

\newcommand{\La} {\Lambda}

\newcommand{\noi} {\noindent}

\newcommand{\va} {\varphi}

\newcommand{\e}{\epsilon}

\newcommand{\fpl}{(-\Delta)_p^s\,}
\newcommand{\pst}{p_s^*}
\newcommand{\DD} {\displaystyle}

\newcommand {\n}{\nonumber\\}
\newcommand{\Addresses}{{
		\footnote{
				\footnotesize
\noindent \textsuperscript{1}Indian Institute of Science Education and Research, Thiruvananthapuram 695551, India  \par\nopagebreak 
   \noindent \textsuperscript{2}Department of Mathematics, Indian Institute of Technology, Guwahati-781039, India  \par\nopagebreak 
\noindent \textsuperscript{A}\textit{e-mail:} \texttt{dhanya.tr@iisertvm.ac.in}.
\noindent \textsuperscript{B}\textit{e-mail:} \texttt{ritabrata20@iisertvm.ac.in}.
\noindent \textsuperscript{C}\textit{e-mail:} \texttt{uttam.maths@iitg.ac.in}.
\noindent \textsuperscript{D}\textit{e-mail:} \texttt{swetatiwari@iitg.ac.in}.

			\medskip\noindent
			{\bf Acknowledgments:} 
 		R. Dhanya was supported by INSPIRE faculty fellowship with grant number DST/INSPIRE/04/2015/003221 when the work was being carried out.  Ritabrata Jana is currently supported by the Prime Minister Research Fellowship during the execution of this research.
			
}}}
\begin{document}
\title[Positive solutions for fractional p- Laplace semipositone problem]{ Positive solutions for fractional p- Laplace semipositone problem with superlinear growth 	\Addresses	}
	\author[ R. Dhanya ]
	{ R. Dhanya\textsuperscript{1,A}} 
 	\author[Ritabrata Jana]
	{Ritabrata Jana\textsuperscript{1,B}} 
 \author[Uttam Kumar]
	{Uttam Kumar\textsuperscript{2,C}} 
 \author[Sweta Tiwari]
	{Sweta Tiwari\textsuperscript{2,D}} 
\maketitle
\begin{abstract}
We consider a semipositone problem involving the fractional $p$ Laplace operator of the form
\begin{equation*}
\begin{aligned}
(-\Delta)_p^s u &=\mu( u^{r}-1) \text{ in } \Omega,\\
u &>0  \hspace{1.40cm} \text{ in }\Omega,\\
u &=0 \hspace{1.40cm} \text{ on }\Omega^{c},
\end{aligned}
\end{equation*}
where $\Omega$ is a smooth bounded convex domain in $\RR^N$, $p-1<r<p^{*}_{s}-1$, where $p_s^{*}:=\frac{Np}{N-ps}$, and $\mu$ is a positive parameter. We study the behaviour of the barrier function under the fractional $p$-Laplacian and use this information to prove the existence of a positive solution for small $\mu$ using degree theory. Additionally, the paper explores the existence of a ground state positive solution for a multiparameter semipositone problem with critical growth using variational arguments.
\end{abstract}
\keywords{\textit{Key words:} Nonlocal problem, fractional p- Laplacian, semipositone problem, indefinite nonlinearities, critical exponent}
\section{Introduction}
In this paper, we consider a fractional  $p$-Laplace equation with a superlinear nonlinearity that changes its sign. We consider $\Omega$ to be a bounded  convex open  set in $\RR^N,$  $N\geq2$ with $C^{2}$-boundary. Our primary focus is to obtain a positive solution to the following Dirichlet boundary value problem
 \begin{eqnarray} \label{pmu}(P^\mu)  \left\{
 	\begin{array}{rl}
 		(-\Delta)_p^s u &=\mu( u^{r}-1) \text{ in }\Omega, \\
 		u &>0  \hspace{1.40cm} \text{ in }\Omega,\\
 		u &=0 \hspace{1.40cm} \text{ on }\Omega^{c},
 	\end{array} 
 	\right. 
 \end{eqnarray}
where $p-1<r< p^{*}_{s}-1,  N > p s$ with
 $p^{*}_{s}:=\frac{Np}{N-ps}$ is the fractional critical Sobolev exponent. Upto a suitable normalization constant,  the operator fractional $p$-Laplacian $(-\Delta)_{p}^s u$, 
is defined as
\begin{equation}\label{o11}
(-\Delta)_{p}^su(x)= 2\thinspace PV. \int_{\mathbb{R}^N}\frac{|u(x)-u(y)|^{p-2}(u(x)-u(y)}{|x-y|^{N+sp}} \,dy \qquad x\in \Omega.
\end{equation}
\par  Motivation to study the semipositone problems stems from pure mathematical interest and also due to its applications in various other areas such as mathematical physics, population dynamics, and materials science (see \cite{CMS00} and references therein). Semipositone problems are characterized by a nonlinearity that changes sign, making the existence of a positive solution difficult to establish. In \cite{KS92}, authors observed a striking phenomenon that non-negative solutions of semipositone problems can have interior zeros. As a result of this intriguing finding, researchers have focused their attention in proving the existence of positive solutions to semipositone problems. This topic has been extensively studied for Laplace and p Laplace operators in  \cite{DH96,HS03,PSS20,RSY07} and many more. The recent advances in the study of nonlocal operators, particularly in understanding fine boundary regularity \cite{IMSS1} and Hopf-type lemmas \cite{LZ21}, have played a crucial role in exploring nonlocal semipositone problems. As a result, researchers have been able to use these tools to establish the positivity of solutions in recent works, including \cite{drst21} and \cite{lopera22}. The paper \cite{drst21} employs a sub-supersolution approach to demonstrate the existence of a solution, while \cite{lopera22} relies on the mountain pass theorem.


In this article, we utilize degree theoretic arguments to demonstrate the existence of a positive solution to the superlinear semipositone problem $(P^\mu)$. To accomplish this, we first establish apriori $L^\infty$ estimates for a family of subcritical problems.  This inturn
reduces to determining the pointwise bounds for the behavior of a barrier function near the boundary of $\Omega$ when acted upon by the fractional p Laplacian. The novelty of this work is to overcome the difficulty which arises due to the global integration as well as the nonlinearity of the operator while obtaining this pointwise bound. We anticipate that this estimate may have additional applications, such as providing an effective upper bound for solutions by utilizing relevant comparison principles and thereby establishing regularity results.  
We next state our main theorem which is proved in Section 5 of this paper.
 \begin{theorem}\label{t61}
 	For $p\geq 2$, there exists $\mu_{0}>0$ such that the problem $(P^\mu)$ admits a positive solution for $\mu\in (0,\mu_{0})$ and $p-1<r<\pst-1$.
 \end{theorem}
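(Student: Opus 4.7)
The approach I would take is Leray--Schauder degree theory applied to a fixed-point reformulation of $(P^\mu)$. Let $S$ denote the solution map $h\mapsto w$ for $(-\Delta)_p^s w = h$ in $\Om$, $w=0$ on $\Om^c$, acting on a Banach space such as $C_0(\overline{\Om})$, and set $T_\mu(u) := S\bigl(\mu((u^+)^{r}-1)\bigr)$. The subcriticality $r<\pst-1$ together with the regularity theory for $\fps$ makes $T_\mu$ compact. The plan has three ingredients: (i)~a uniform $L^\infty$ a priori bound for all fixed points of $T_\mu$, and more generally for all solutions along the degree-computing homotopy, when $\mu\in(0,\mu_0]$; (ii)~a positive barrier forcing every such fixed point to be strictly positive in $\Om$; and (iii)~a homotopy that deforms $T_\mu$ to a map whose degree is computable, coupled with invariance of the Leray--Schauder degree.

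For (i) I expect a blow-up contradiction in the style of Gidas--Spruck, now well developed in the nonlocal setting: if $u_n$ solves $(P^{\mu_n})$ with $\|u_n\|_\infty\to\infty$, the rescaling $v_n(x)=u_n(x_n+\la_n x)/\|u_n\|_\infty$ with $\la_n\to 0$ chosen to normalize the operator yields, in the limit, a nontrivial nonnegative solution of $(-\Delta)_p^s v = v^r$ on $\RR^N$ or a half-space, which is ruled out by Liouville-type results. An alternative route is Moser iteration based on the embedding $\xs\hookrightarrow L^{\pst}(\Om)$, controlling the right-hand side in $L^q$ for some $q>N/(ps)$. Either way the bound is uniform for $\mu\in(0,\mu_0]$ and along the homotopy.

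The heart of the argument, and the main obstacle, is~(ii): producing quantitative pointwise bounds on $\fps \Phi$ where $\Phi$ is a barrier modelled on the boundary-distance function $d_\Om(x)=\mathrm{dist}(x,\pa\Om)$---for instance $\Phi\asymp d_\Om^s$ inside $\Om$, extended by zero outside. As the introduction emphasizes, the integral in \eqref{o11} is global, so estimates cannot be localized to a neighborhood of $x$; moreover, the $|u(x)-u(y)|^{p-2}$ factor prevents the clean superposition available for the linear fractional Laplacian. I plan to split $\RR^N=B_\delta(x)\cup(\Om\setminus B_\delta)\cup\Om^c$ at the scale $\delta=d_\Om(x)/2$, to handle the principal-value part on $B_\delta(x)$ via a Taylor expansion of $\Phi$, and to bound the mesoscopic and tail contributions using the sharp boundary H\"older behavior of $d_\Om^s$ from \cite{IMSS1} together with the convexity of $\Om$. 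The desired output is two-sided control of $\fps\Phi$ up to $\pa\Om$, sharp enough that a small multiple $c\Phi$ is a strict subsolution of $(P^\mu)$ for small $\mu$; the hypothesis $p\geq 2$ enters both in the convexity-type estimates for the scalar nonlinearity $|t|^{p-2}t$ and in the available boundary regularity.

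With $\|u\|_\infty\leq M$ from~(i) and the lower barrier from~(ii) in hand, step~(iii) is routine: on the open set $U=\{u\in C_0(\overline{\Om}):u>c\Phi,\ \|u\|_\infty<M+1\}$ the degree $\deg(I-T_\mu,U,0)$ is well defined, and a suitable homotopy (e.g.\ $t\mapsto T_{t\mu}$, or a deformation of the inhomogeneity) keeps fixed points inside $U$ because of the uniform bound and the subsolution property of $c\Phi$. A nonzero index at one endpoint then yields, by invariance of degree, a fixed point of $T_\mu$ in $U$, and this fixed point is positive since $u\geq c\Phi>0$ in $\Om$---exactly the solution asserted. Only the barrier estimate in~(ii) is genuinely new; the a priori bound and the degree bookkeeping follow the template by now standard for semipositone problems, once the behavior of $\fps\Phi$ near $\pa\Om$ is pinned down.
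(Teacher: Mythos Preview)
Your overall architecture---compact fixed-point reformulation, Gidas--Spruck blow-up for the a~priori bound, and degree theory---matches the paper, and your instinct that the barrier estimate is the novel ingredient is correct. But you have mislocated where the barrier enters, and this leaves a genuine gap in step~(ii). In the paper the barrier $\xi\sim d^\beta$ (with small $\beta<\frac{sp}{p-1}$, not $\beta=s$) is a \emph{supersolution}: Lemma~\ref{l65} gives $(-\Delta)_p^s\xi \geq C\,d^{\beta(p-1)-sp}\to+\infty$ near $\partial\Omega$, and the resulting upper bound $u\leq C d_\tau^{sp-\theta}$ (Lemma~\ref{l66}) is consumed \emph{inside} the blow-up argument of Theorem~\ref{t64} to show that the rescaled limit point stays in the interior of the half-space. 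Your plan to use a small multiple $c\Phi$ with $\Phi\asymp d^s$ as a \emph{subsolution} of $(P^\mu)$ cannot succeed for a semipositone right-hand side: near $\partial\Omega$ one has $\mu\bigl((c\Phi)^r-1\bigr)\to -\mu<0$, while $(-\Delta)_p^s(c\Phi)=c^{p-1}(-\Delta)_p^s\Phi$ is nonnegative (indeed bounded, since $(-\Delta)_p^s d^s$ is essentially constant near $\partial\Omega$), so the subsolution inequality fails at the boundary regardless of how small $c$ or $\mu$ is.

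Positivity in the paper is obtained by an entirely different mechanism. First, the rescaling $w=\gamma u$ with $\gamma^{r+1-p}=\mu$ converts $(P^\mu)$ into $(-\Delta)_p^s w=w^r-\gamma^r$, so that $\mu\to 0$ becomes $\gamma\to 0$ with \emph{nontrivial} limit equation $(-\Delta)_p^s w=w^r$; your proposed homotopy $t\mapsto T_{t\mu}$ instead degenerates to the zero operator at $t=0$, where the degree on your set $U=\{u>c\Phi\}$ (which excludes $0$) vanishes and yields nothing. Second, the degree is computed on an annulus $B_{R_2}\setminus\overline{B}_{R_1}$ in $C(\overline\Omega)$ via an auxiliary homotopy $T(u,t)=K\bigl((|u|+t)^r\bigr)$ (Proposition~\ref{pr61} and Lemma~\ref{l67}), and ruling out solutions for large $t$ requires a Picone-type inequality together with the convexity of $\Omega$---an ingredient absent from your sketch. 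Finally, positivity for small $\gamma$ is established a~posteriori by compactness: the weighted H\"older regularity of \cite{IMSS1} gives $w_{\gamma_n}/d^s\to w_0/d^s$ uniformly on $\overline\Omega$, where $w_0$ solves the limit problem and satisfies $\inf_\Omega w_0/d^s>0$ by the Hopf-type lemma, forcing $w_{\gamma_n}>0$ for large $n$. The barrier is indeed the heart of the matter, but only through the a~priori bound; it plays no role in the positivity step.
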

The use of apriori estimates for nonlinear equations is known to be a vital tool in proving the existence of solutions to nonlinear problems. This estimate is often used to verify assumptions of the Leray Schauder degree theory about the existence of a fixed point. Chen et al.\cite{chen2016direct} demonstrated the effectiveness of this approach in proving the existence of solutions to fractional Laplace equation. Barrios et al. \cite{BLG} examined more general nonlocal operators and considered the viscosity solutions of the problem in the presence of a gradient term. Both proofs rely on constructing a barrier function and analyzing its behavior near the boundary. In this article, we use the idea of  Gidas-Spruck translated function to obtain the uniform $L^\infty$ estimate for viscosity solutions by studying a proper barrier function under the fractional p-Laplacian. 
It is well known that Gidas-Spruck type estimate transforms the uniform $L^\infty$ bound to that of Liouville type result. In order to obtain the results outlined in Theorem \ref{t61}, we must rely on a nonexistence assumption, denoted $(\mathcal{NA})$, which is similar to the nonexistence assumption made by Brasco et al. in \cite{brasco}. We consider a sub-critical exponent problem with $p-1< q< \pst-1$ either when $\mathcal{H}$ is the entire space $\mathbb R^N$ or a half-space  in $\mathbb R^N$ given by 
\begin{eqnarray}\label{p62}  \left\{
	\begin{array}{rl}
		(-\Delta)_p^s u &= u^{q} \text{ in }\mathcal{H}, \\
	u&>0\hspace{0.2cm} \mbox{ in } \mathcal{H};\\	u &=0 \hspace{0.20cm} \text{ on }\mathbb R^N\setminus\mathcal{H}. 
	\end{array} 
	\right. 
\end{eqnarray}
\begin{itemize}
    \item [$(\mathcal{NA})$] Problem (\ref{p62}) does not admit nontrivial viscosity solution in $C^{\alpha}(\mathcal{H}).$
\end{itemize}
Although to the best of our knowledge this result has not been proven yet, we have a strong basis for believing that it is a plausible assumption. Specifically, we note that analogous results have been established for both the fractional Laplacian (i.e., for $0<s<1$ and $p=2$) in Theorem 1.1 and 1.2 of Quass and Xia\cite{qx}, and for the local case ( $-\Delta_p , \, 1<p<\infty$) in Theorem 1.1 and Lemma 2.8 of Zou \cite{zhh}. Therefore, since nonexistence results are already known for these special cases, it is reasonable to assume $\mathcal{(NA)}.$ 
\par
In a recent preprint, Lopera et al. \cite{lopera22} investigated a semipositone problem with superlinear nonlinearity that is similar to $(P^\mu)$. They utilized the variational method to establish the existence of a nonnegative solution when $r$ belongs to the interval $(p-1,p_s^*-1)$ and $\mu$ is sufficiently small. However, their findings only guarantee the positivity of the solution for specific values of $r$ and small $\mu$. In contrast, our solution to problem $(P^\mu)$ does not impose any such restrictions and guarantees the positivity of the solution for all values of $r$. 

In the last section of this article, we consider a multiparameter semipositone problem involving critical Sobolev exponent given as below: 
\begin{eqnarray}\label{nscp5}  \left.
	\begin{array}{rl}
		(-\Delta)_{p}^s u &=\lambda u^{p-1}+\mu(u^{p^{*}_{s}-1}-1) \text{ in }\Omega, \\
		u &>0  \hspace{3.55cm} \text{ in }\Omega,\\
		u &=0 \hspace{3.55cm} \text{ on }\Omega^{c},
	\end{array} 
	\right\} 
\end{eqnarray} 
\noindent
\noi When $\la=0$ and $\mu>0,$ the problem (\ref{nscp5}) reduces to (\ref{pmu})  for $r=p^{*}_s-1.$ We observe that the scaling $u\mapsto \mu^{\frac{1}{p^{*}_{s}-p}}u$ transforms the  equation in the following critical semipositone nonlocal problem which we call as $(P_\la^\mu)$

\begin{eqnarray}\label{nscp3} (P^\mu_\lambda)  \left\{
	\begin{array}{rl}
		(-\Delta)_{p}^s u &=\lambda u^{p-1}+u^{p^{*}_{s}-1}-\mu \text{ in }\Omega, \\
		u &>0  \hspace{3cm} \text{ in }\Omega,\\
		u &=0 \hspace{3cm} \text{ on }\Omega^{c}.
	\end{array} 
	\right. 
\end{eqnarray}
where $\lambda,\mu >0$ are parameters.
\noindent
For a given $\lambda>0$, the solution of the problem \eqref{nscp3}  are the critical points of the energy functional $E_{\mu}:D_{0}^{s,p}(\Omega)\rightarrow \mathbb{R}$
defined by  
\begin{equation*}
	E_{\mu}(u)=	 \frac{1}{p} \int_{\mathbb{R}^{N} \times \mathbb{R}^{N}} \dfrac{|u(x)-u(y)|^{p}}{|x-y|^{N+sp}}\,dx\, dy - \frac{\lambda}{p}\int_{\Omega}u^{p}\,dx- \frac{1}{p^{*}_{s}} \int_{\Omega} u^{p^{*}_{s}}\,dx +\int_{\Omega}\mu u\,dx.
\end{equation*}
All the weak solutions of problem \eqref{nscp3} lie on the set
\begin{equation*} 
\mathcal{N}_{\mu}=
\left\{
\begin{aligned}
u\in D_{0}^{s,p}(\Omega): &\ u>0 \ \text{in}\  \Omega \thickspace
	\thickspace  \text{and} \\ &\int_{\mathbb{R}^N \times \mathbb{R}^N} \dfrac{|u(x)-u(y)|^{p}}{|x-y|^{N+sp}}\,dx\, dy = \int_{\Omega}(\lambda u^{p}+u^{p^{*}_{s}}-\mu u)\,dx 
\end{aligned}   
\right\}
\end{equation*}

A weak solution that minimizes $E_{\mu}$ on $\mathcal{N}_{\mu}$ is a ground state solution for \eqref{nscp3}. 
Let 
\begin{equation}\label{nscp4}
	\lambda_{1}=\inf_{u\in D^{s,p}_{0}(\Omega)\setminus{\{0\}}}	\dfrac{\displaystyle\int_{\mathbb{R}^N \times \mathbb{R}^N} \dfrac{|u(x)-u(y)|^{p}}{|x-y|^{N+sp}}\,dx\, dy}{\int_{\Omega}|u(x)|^{p}\,dx}
\end{equation}
is the first Dirichlet eigenvalue of the fractional $p$ Laplacian, which is positive. We prove that,
\begin{theorem}\label{nscpt1}
For $p\ge2$,  $N\ge sp^{2}$ and $\lambda\in (0,\lambda_{1})$, there exists $\mu^{*}>0$ such that  $\mu\in (0,\mu^{*})$, problem \eqref{nscp3} has a ground state solution $u_{\mu} \in C_{d}^{0,\alpha}(\overline{\Omega})$ for some $\alpha \in(0,s]$.	
\end{theorem}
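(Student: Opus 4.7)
The plan is to construct $u_\mu$ as a minimizer of $E_\mu$ on the Nehari manifold $\mathcal{N}_\mu$, using a Brezis--Nirenberg type threshold estimate to recover compactness in the presence of critical growth.

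The first step is to set up the variational problem. Because $\lambda<\lambda_1$, the quadratic form $\tfrac{1}{p}[u]_{s,p}^{p}-\tfrac{\lambda}{p}\int_\Omega u^{p}$ is positive definite on $D_{0}^{s,p}(\Omega)$, so for each nonnegative $u\in D_{0}^{s,p}(\Omega)\setminus\{0\}$ a standard fibering analysis shows that, for $\mu$ small, the function $t\mapsto E_\mu(tu)$ attains a unique positive maximum at some $t(u)>0$ with $t(u)u\in\mathcal{N}_\mu$. Substituting the Nehari identity into $E_\mu$ yields the convenient formula
\begin{equation*}
E_\mu(u) \;=\; \frac{s}{N}\int_\Omega u^{\pst}\,dx \;+\; \frac{p-1}{p}\,\mu\int_\Omega u\,dx \qquad\text{for every } u\in \mathcal{N}_\mu,
\end{equation*}
which shows that $c_\mu:=\inf_{\mathcal{N}_\mu}E_\mu\ge 0$ is well defined. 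An Ekeland argument on $\mathcal{N}_\mu$ then produces a bounded minimizing (PS)$_{c_\mu}$ sequence for $E_\mu$.

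The heart of the argument is the identification of the compactness threshold. Let $S$ denote the best constant of the embedding $D_{0}^{s,p}(\RR^N)\hookrightarrow L^{\pst}(\RR^N)$, with extremal family $U_\varepsilon$. A fractional $p$-Laplace concentration--compactness argument together with a Brezis--Lieb type decomposition shows that any (PS)$_c$ sequence for $E_\mu$ at a level $c<\tfrac{s}{N}S^{N/(sp)}$ admits a strongly convergent subsequence in $D_{0}^{s,p}(\Omega)$. I would therefore prove the energy estimate $c_\mu<\tfrac{s}{N}S^{N/(sp)}$ by testing with a cut-off $\eta U_\varepsilon$ centered at an interior point of $\Omega$ and expanding $\sup_{t>0}E_\mu(t\eta U_\varepsilon)$ via the sharp asymptotic estimates known for the fractional $p$-Laplacian. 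The negative correction supplied by the eigenvalue term $-\tfrac{\lambda}{p}\int u^{p}$ dominates the positive error produced by the truncation exactly under the dimensional condition $N\ge sp^{2}$, while the semipositone contribution $+\mu\int u$ is of lower order in $\varepsilon$ and is made harmless by choosing $\mu<\mu^{*}$ small enough.

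Once the threshold estimate is secured, the minimizing sequence converges strongly to a critical point $u_\mu\ge 0$ of $E_\mu$ with $E_\mu(u_\mu)=c_\mu$. To obtain the regularity and positivity claimed in the theorem, I would first upgrade $u_\mu$ to $L^\infty(\Omega)$ by Moser iteration and then to $C_d^{0,\alpha}(\overline{\Omega})$ via the fine boundary regularity of Iannizzotto--Mosconi--Squassina \cite{IMSS1}; strict positivity in $\Omega$ then follows from the Hopf-type result \cite{LZ21} combined with the barrier analysis for $(-\Delta)_{p}^{s}$ developed in Section~5 of this paper, which rules out interior zeros once $\mu$ is small, exactly as in the proof of Theorem~\ref{t61}. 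The main obstacle is the threshold estimate $c_\mu<\tfrac{s}{N}S^{N/(sp)}$: unlike the $p=2$ case, explicit formulas for $(-\Delta)_p^s U_\varepsilon$ are unavailable, so the interaction of the cut-off with the nonlocal nonlinear energy has to be controlled using only the known asymptotic estimates, and the role of the hypothesis $N\ge sp^{2}$ is precisely to ensure that the eigenvalue correction beats these truncation errors. A secondary difficulty is upgrading the nonnegative minimizer to a strictly positive solution, which is nontrivial because the constant semipositone forcing $-\mu$ prevents a direct application of the strong maximum principle.
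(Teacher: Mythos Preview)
Your variational set-up differs from the paper's in a way that is workable but worth noting. The paper does \emph{not} minimize on $\mathcal N_\mu$; it replaces the nonlinearity by $\lambda u_+^{p-1}+u_+^{\pst-1}-\mu f(u)$, where $f$ is a truncation that vanishes for $u\le -1$, and then runs the mountain-pass theorem on the associated modified energy. The compactness threshold is the \emph{shifted} level $\tfrac{s}{N}S_{s,p}^{N/sp}-(1-\tfrac1p)\mu|\Omega|$ (Lemma~\ref{nscpl1}), and the mountain-pass level is pushed below it using the truncated Talenti-type profiles $u_{\epsilon,\delta}$ of \cite{BNMPY} (Lemma~\ref{nscpl2}); the hypothesis $N\ge sp^2$ enters exactly where you say it does. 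Your Nehari route could in principle be made to work, but you would still owe the reader a proof that $\mathcal N_\mu$ is a natural constraint in the presence of the non-homogeneous term $\mu\int_\Omega u$, so that an Ekeland sequence on $\mathcal N_\mu$ is a \emph{free} (PS) sequence for $E_\mu$; the paper's modified-functional approach sidesteps this entirely.

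The substantive gap is in the positivity step. You cannot apply a Hopf-type lemma directly to $u_\mu$, because the right-hand side $\lambda u_\mu^{p-1}+u_\mu^{\pst-1}-\mu$ is \emph{negative} on the set where $u_\mu$ is small; the barrier estimates of Section~\ref{linf} are also irrelevant here. What actually works, and what the phrase ``exactly as in the proof of Theorem~\ref{t61}'' points at without unpacking, is a compactness-in-$\mu$ argument. One first proves that the family $\{u_\mu\}$ is bounded in $C_d^{0,\alpha}(\overline\Omega)$ \emph{uniformly in $\mu$} (Lemma~\ref{nscpl3}); this requires a uniform-in-$\mu$ $L^\infty$ bound, which the paper gets by combining equi-integrability of $|u_\mu|^{\pst}$ with the Moser-type iteration in the appendix (Theorem~\ref{anscpt3}). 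Along any sequence $\mu_j\to 0$ one then extracts a $C_d^{0,\alpha}$ limit $u$ which solves the $\mu=0$ equation with right-hand side $\lambda u_+^{p-1}+u_+^{\pst-1}\ge 0$; now Hopf applies to $u$ and gives $u/d^s\ge c>0$, and the convergence $u_{\mu_j}/d^s\to u/d^s$ in $C^0(\overline\Omega)$ transfers strict positivity back to $u_{\mu_j}$ for $j$ large. Your plan establishes $L^\infty$ and $C_d^{0,\alpha}$ regularity only for a single fixed $\mu$, which is not enough: the entire mechanism hinges on uniformity as $\mu\to 0$ together with nontriviality of the limit (guaranteed by the uniform lower bound $c_\mu\ge c_0>0$ in Lemma~\ref{nscpl2}).
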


\section{Preliminaries}
\subsection{Function spaces}
To begin, we will revisit the definitions of several Sobolev spaces with fractional orders that are utilized in this article. For a smooth, bounded domain $\Omega \subset \mathbb{R}^N$ (where $N\geq 2$)  we denote the standard $L^{p}(\Omega)$ norm by $|\cdot|_{L^{p}(\Omega)}$, where $p\in [1,\infty]$. For a measurable function $u:\mathbb{R}^{N}\rightarrow \mathbb{R}$ and for $p\in(1,\infty)$ and $s\in (0,1)$, let
\begin{equation*}
	[u]_{W^{s,p}(\mathbb{R}^N)}:=  \left(\int_{\mathbb{R}^N \times \mathbb{R}^N} \dfrac{|u(x)-u(y)|^{p}}{|x-y|^{N+sp}}\,dx\, dy\right)^{1/p}
\end{equation*}
be the Gagliardo seminorm.
For $sp<N$, we consider the space 
\begin{equation*}
	W^{s,p}(\mathbb{R}^{N}):= \left \{u \in {L}^{p}(\mathbb{R}^{N}):[u]_{{W^{s,p}(\mathbb{R}^{N})}}<\infty  \right\}.
\end{equation*}
Then $W^{s,p}(\mathbb{R}^{N})$ is a Banach space with respect to the norm 
\begin{equation*}
\|u\|_{{W^{s,p}(\mathbb{R}^{N})}}=\left( \|u\|^{p}_{L^{p}(\mathbb{R}^{N})} + [u]^{p}_{{W^{s,p}(\mathbb{R}^{N})}}\right)^{\frac{1}{p}} . 	
\end{equation*}
 To address the Dirichlet boundary condition, we consider the space
\begin{equation*}
	W_{0}^{s,p}(\Omega):= \left \{u \in W^{s,p}(\mathbb{R}^{N}):u=0\medspace\text{in}\medspace \mathbb{R}^{N}\setminus \Omega \right\},
\end{equation*}
which is a Banach space endowed with the norm $\|\cdot\|=	\|\cdot\|_{{W^{s,p}(\mathbb{R}^{N})}}$. Moreover the embedding $W^{s,p}_{0}(\Omega)\hookrightarrow L^{r}(\Omega)$ is continuous for $1\leq r\leq p^{*}_{s}$ and
compact for  $1\leq r < p^{*}_{s}$. 
 Due to continuous embedding of $W^{s,p}_{0}(\Omega)\hookrightarrow L^{r}(\Omega)$ for $1\leq r\leq p^{*}_{s}$, we define the equivalent norm on $W^{s,p}_{0}(\Omega)$ as
\begin{equation*}
	\|u\|_{W^{s,p}_{0}}:=\left(\int_{\mathbb{R}^N \times \mathbb{R}^N} \dfrac{|u(x)-u(y)|^{p}}{|x-y|^{N+sp}}\,dx\, dy\right)^{1/p}.
\end{equation*}
In order to handle the critical growth in problem (\ref{nscp5}), we will work with the following function spaces\begin{equation*}
	D^{s,p}(\mathbb{R}^{N}):= \left \{u \in {L}^{p_{s}^{*}}(\mathbb{R}^{N}):[u]_{{W^{s,p}(\mathbb{R}^{N})}}<\infty  \right\},
\end{equation*}
\begin{equation*}
	D_{0}^{s,p}(\Omega):= \left \{u \in D^{s,p}(\mathbb{R}^{N}):u=0\medspace\text{in}\medspace \mathbb{R}^{N}\setminus \Omega \right\}.
\end{equation*}
Note that for the bounded domain $\Omega$, $	D_{0}^{s,p}(\Omega)=W_{0}^{s,p}(\Omega).$
Next, we recall some weighted H\"{o}lder spaces.
Let the distance function $\;$ $d: \overline{\Omega} \rightarrow \mathbb{R}_+ $ be defined  by 
\begin{equation*}
	\label{dist}d(x):= \text{dist}(x,\partial \Omega),\; x \in \overline{\Omega}.
\end{equation*}
The weighted H\"older type spaces are defined as follows:
\begin{align*}
	& C^0_d(\overline{\Omega}) := \bigg \{ u \in C^0(\overline{\Omega}): 
	\frac{u}{d^s} \text{ admits a continuous extension to } \overline{\Omega}   \bigg\},\\
	& C^{0,\al}_d(\overline{\Omega}) := \bigg \{ u \in C^0(\overline{\Omega}): \frac{u}{d^s} \text{ admits a } \alpha \text{ -H\"older continuous extension to } \overline{\Omega}   \bigg\}
\end{align*}
equipped with the norms
\begin{align*}
	&\|u\|_{C_d^0(\overline{\Omega})}:= \|\frac{u}{d^s}\|_{L^\infty(\Omega)}, \nonumber\\ &\|u\|_{C_d^{0,\alpha}(\overline{\Omega})}:= \|u\|_{C_d^0(\overline{\Omega})}+ \sup_{x, y \in \overline{\Omega},\, x\not = y } \frac{|u(x)/d^s(x)- u(y)/d^s(y)|}{|x-y|^{\alpha}},
\end{align*}
respectively.  The embedding $C^{0,\alpha}_d(\overline{\Omega})\hookrightarrow C^0_d(\overline{\Omega})$ is compact, for all $\alpha\in(0,1).$

\subsection{Notions of solutions}
Next, we specify the two notions of the solutions, viz. weak and the viscosity solutions 
of the following boundary value problem:
\begin{eqnarray}\label{bbp2}   \left\{
	\begin{array}{rl}
		(-\Delta)_p^s u(x) &=h(x,u) \hspace{.50cm} \text{ in }\Omega, \\
		u(x) &=0  \hspace{1.55cm} \text{ on }\Omega^{c}.
	\end{array} 
	\right. 	
\end{eqnarray}
We also consider the equivalence between these two solutions and
refer to \cite{Lindgren1}, and \cite{Barrios20} for the details. 
Let $p\in(1,\infty), $ $h:\Omega\times\mathbb{R}\rightarrow \mathbb{R}$ be a  continuous function which satisfy the growth condition
\begin{equation}
	|h(x,t)|\leq \gamma(|t|)+\phi(x),
\end{equation}
where $\gamma\ge 0$ is continuous and $\phi\in L^{\infty}_{loc}(\Omega).$ We define,
\begin{equation*}
	L_{sp}^{p-1}(\mathbb R^N):= \left \{u \in L^{p-1}_{loc}(\mathbb{R}^{N}):
	\int_{\mathbb{R}^N } \dfrac{|u(x)|^{p-1}}{(1+|x|)^{N+sp}}\,dx<\infty\right\}.
\end{equation*}
\begin{definition} A function $u\in W^{s,p}(\mathbb{R}^{N})\cap L_{sp}^{p-1}(\mathbb R^N)$ is a weak super solution (sub solution) of \eqref{bbp2} if	
	\begin{equation*}
		\int_{\mathbb{R}^N \times \mathbb{R}^N } \dfrac{|u(x)-u(y)|^{p-2}(u(x)-u(y))(\varphi(x)-\varphi(y))}{|x-y|^{N+sp}}  \,dx\,dy 
		 \geq (\leq) 	\int_{\Omega} h(x,u)\varphi \,dx.	
	\end{equation*}
	for every $\varphi \in 	W_{0}^{s,p}(\Omega)$ and $\varphi\geq 0.$	
\end{definition}
We say that $u$ is a weak solution of \eqref{bbp2} if it is both a weak supersolution and subsolution to the problem.\\
\par Next, we define a viscosity sub and super solution of \eqref{bbp2} as given in \cite{Barrios20}. Let $D\subset \Omega$ be an open set; we define
\begin{equation*}
	C_{\beta}^{2}(D):=\left\lbrace u\in C^{2}(D):\sup_{x\in D}\left( \frac{\min\{d_{u}(x),1\}^{\beta-1}}{|\nabla u(x)|}+\frac{|D^{2}u(x)|}{d_{u}(x)^{\beta-2}}\right) <\infty\right\rbrace,
\end{equation*}
where $d_{u}$ is the distance from the set of critical points of $u$ denoted as $N_{u}$, that is
\begin{equation*}
	d_{u}(x):=dist(x,N_{u}),\qquad N_{u}:=\{x\in\Omega:\nabla u(x)=0\}.
\end{equation*}
Motivation for the definition of the space $C_\beta^2(D)$ comes from \cite{Lindgren1}. 
\begin{definition}\cite[Definition 2.2]{Barrios20}
	A function $u$ is a viscosity super-solution (subsolution) of \eqref{bbp2} if 
	\begin{enumerate}[(i)]
		\item $u<+\infty \thickspace(u>-\infty)$ a.e. in $\mathbb{R}^{N}$, $u>-\infty \thickspace (u<+\infty)$ a.e. in $\Omega$.
		\item $u$ is lower (upper) semicontinuous in $\Omega$.
		\item If $\phi\in C^{2}(B(x_{0},r))\cap L_{sp}^{p-1}(\mathbb{R}^{N})$ for some $B(x_{0},r)\subset \Omega$ such that $\phi(x_{0})=u(x_{0}),\phi\leq u (\phi \ge u)$ in $B(x_{0},r),\thickspace\phi=u$ in $\Omega\setminus B(x_{0},r)$ and one of the following conditions hold:
		\begin{enumerate}[(a)]
			\item $p>\frac{2}{2-s}$ or $\nabla \phi(x_{0})\neq 0$,
			\item $1<p\leq \frac{2}{2-s}$, $\nabla \phi(x_{0})= 0$ such that $x_{0}$ is an isolated critical point of $\phi$, and $\phi\in C_{\beta}^{2}(B(x_{0},r))$ for some $\beta>\frac{sp}{p-1}$, 
		\end{enumerate}
		then
			\begin{equation*}
				(-\Delta)_p^s \phi(x_{0}) \ge (\leq) h(x_{0},\phi(x_{0}).
			\end{equation*}
		\item $u_{-}:=\max\{-u,0\}\thickspace (u_{+}:=\max\{u,0\})$ belongs to $L_{sp}^{p-1}(\mathbb{R}^{N})$.	
	\end{enumerate}
	We say that $u$ is a viscosity solution of \eqref{bbp2} if it is both a viscosity supersolution and subsolution to the problem. 	
\end{definition}
\noi \textbf{Notations:} The paper uses various notations, which we will clarify here. Sections 3 and 4 consider the case where $p > \frac{2}{2-s}$. However, for sections 5 and 6, the paper restricts $p$ to the interval $(2,\infty)$ due to the lack of improved weighted regularity results. Unless stated otherwise, $C$, $C_i$, $C_{K,u}$, and other similar notations denote positive generic constants, which may vary in value within the same line. For simplicity, the notation $a^{p-1}$ denotes $a^{p-1}=|a|^{p-2}a$ for any real number $a$.

\section{Pointwise estimates for a barrier function under s-fractional p-Laplace operator} \label{linf}
\noi In this section, we prove some important estimates which are used in subsequent sections to prove the main results on a nonlocal superlinear semipositone problem with subcritical growth.

\noi We begin this section with an important observation that if $u$ is smooth enough, then the 
principal value $P.V.$ in the definition of fractional $p$-Laplacian can be replaced with integral over $\RR^N$ when $p>\frac{2}{2-s}.$ We can also define
\begin{equation} \label{e61}
(-\Delta)_p^s u(x)=P.V. \int_{\RR^N} \frac{(u(x)-u(x+z))^{p-1}+ (u(x)-u(x-z))^{p-1}}{|z|^{N+sp}}\,dz.
\end{equation}
The equivalence of the definitions \eqref{o11} and \eqref{e61}
can be proved using a change of variable.  
\begin{lemma}\label{l62}
	For $\frac{2}{2-s}<p<\infty,$ suppose that $u\in L^\infty(\RR^N) \cap C^{1,1}_{loc}(\Omega),$ then for $x\in \Omega,$
	$$(-\Delta)_p^s u(x)=\int_{\RR^N} \frac{(u(x)-u(x+z))^{p-1}+ (u(x)-u(x-z))^{p-1}}{|z|^{N+sp}}\,dz.$$

\end{lemma}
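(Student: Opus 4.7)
The plan is to begin from definition \eqref{o11}, perform the change of variable $y = x+z$, and then exploit the symmetry $z \mapsto -z$ together with oddness of the map $t \mapsto |t|^{p-2}t$ to rewrite the principal value in the symmetrized form \eqref{e61}; the content of the lemma is then to show that, once symmetrized, the integrand is absolutely integrable on $\mathbb{R}^N$, so that the $\mathrm{P.V.}$ can be dropped. The hypothesis $p > \frac{2}{2-s}$ will turn out to be sharp for absorbing the singularity of the integrand at $z=0$.

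Integrability at infinity is immediate: since $u\in L^\infty(\mathbb{R}^N)$, the numerator is uniformly bounded and $|z|^{-N-sp}$ is integrable on $\{|z|>1\}$. The delicate point is the region near $z=0$. Fix $x \in \Omega$ and $r>0$ with $\overline{B(x,2r)} \subset \Omega$. From $u \in C^{1,1}(B(x,2r))$, Taylor's theorem yields
\[
u(x\pm z) = u(x) \pm \nabla u(x)\cdot z + R_{\pm}(z), \qquad |R_{\pm}(z)|\leq M|z|^2, \quad |z|<r.
\]
Writing $a := \nabla u(x)\cdot z$ and $g(t):= |t|^{p-2}t$, the oddness of $g$ produces the crucial identity
\[
g(-a-R_+(z)) + g(a-R_-(z)) = \bigl[g(-a-R_+(z))-g(-a)\bigr] + \bigl[g(a-R_-(z))-g(a)\bigr],
\]
reducing the problem to estimating two increments of $g$ near $\pm a$. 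The standard estimates for $g$ then give, for $1<p\leq 2$, $|g(\xi)-g(\eta)|\leq C|\xi-\eta|^{p-1}$, and for $p\geq 2$, $|g(\xi)-g(\eta)|\leq C(|\xi|+|\eta|)^{p-2}|\xi-\eta|$. In the first regime each bracket is at most $C|z|^{2(p-1)}$, so the integrand is at most $C|z|^{2(p-1)-N-sp}$, which is integrable on $\{|z|<r\}$ precisely when $2(p-1)>sp$, i.e.\ when $p>\frac{2}{2-s}$. In the second regime the arguments $\pm a + R_{\pm}(z)$ have modulus at most $C|z|$ for small $|z|$, so each bracket is at most $C|z|^{p-2}\cdot|z|^2 = C|z|^p$, giving an integrand of order $C|z|^{p(1-s)-N}$ which is always integrable; this regime is trivially consistent with the hypothesis since $\frac{2}{2-s}<2$ for $s\in(0,1)$.

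The main obstacle is the sub-quadratic regime $\frac{2}{2-s}<p\leq 2$: there $g$ is only Hölder continuous at $0$ and the two increments cannot individually be bounded in an integrable way without first exploiting cancellation. One must combine the symmetrization $z\mapsto -z$ with the oddness of $g$ so that the gradient contribution $\pm a$ cancels and only the quadratic remainders $R_{\pm}(z)=O(|z|^2)$ survive; it is precisely the matching between the resulting size $|z|^{2(p-1)}$ and the singular weight $|z|^{-N-sp}$ that produces the sharp threshold $p>\frac{2}{2-s}$. Once absolute integrability is in hand, the passage from \eqref{e61} to the stated identity is automatic, and the equivalence with \eqref{o11} was already produced by the opening change of variable.
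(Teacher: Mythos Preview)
Your argument is correct and follows essentially the same route as the paper: both proofs show that the symmetrized integrand lies in $L^1(\mathbb{R}^N)$ by handling the tail via $u\in L^\infty(\mathbb{R}^N)$ and the local part via a Taylor-based pointwise bound, then conclude by dominated convergence. The only difference is cosmetic: the paper outsources the pointwise estimate near $z=0$ to \cite[Lemma~2.11]{t24}, whereas you derive it directly from the Taylor expansion combined with the H\"older/Lipschitz inequalities for $g(t)=|t|^{p-2}t$, which makes your version self-contained and also makes the sharpness of the threshold $p>\frac{2}{2-s}$ explicit.
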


\begin{proof}Let $p\geq 2$ and $x$ belongs to a compact subset $K$ of $\Omega.$ Using \cite[Lemma 2.11]{t24}, for proper choice of $\gamma\in (0,1),$ we have 
\begin{equation}
    \begin{aligned}
    \int_{B(0,R_{K})} \frac{|(u(x)-u(x+z))^{p-1}+ (u(x)-u(x-z))^{p-1}| }{|z|^{N+sp}}
    \\
    \leq C_{K,u}\int_{B(0,R_{K})}|z|^{\gamma+p-1-N-sp} dz <\infty. 
    \end{aligned}
\end{equation}
	$$ $$
	Since, $u\in L^\infty(\RR^N),$ it can easily be seen that  $$\int_{B^{c}(0,R_{K})} \frac{|(u(x)-u(x+z))^{p-1}+ (u(x)-u(x-z))^{p-1}| }{|z|^{N+sp}} <\infty.$$
	Therefore,   $\displaystyle \frac{|(u(x)-u(x+z))^{p-1}+ (u(x)-u(x-z))^{p-1}| }{|z|^{N+sp}}\in L^1 (\RR^N)$ and hence by dominated convergence theorem we can write, 
	$$   \begin{array}{llr}
		\displaystyle
		\lim_{\epsilon\rightarrow 0} \int_{z\in B^c(0,\epsilon)} \frac{(u(x)-u(x+z))^{p-1}+ (u(x)-u(x-z))^{p-1}}{|z|^{N+sp}}  dz =&&\\[5mm]
		\displaystyle \;\;\;\;\;\;\;\;\;\;\;\;\;\;\;\;\;\;\;\;\; \;\;\;\;\;\;\;\;\;\;\;\;
		\displaystyle \int_{\RR^N} \frac{(u(x)-u(x+z))^{p-1}+ (u(x)-u(x-z))^{p-1}}{|z|^{N+sp}}  dz.
	\end{array}$$
	Now if $p\in (\frac{2}{2-s},2)$ then again using the  second pointwise estimate in \cite[Lemma 2.11]{t24}, 
and arguing as in the above case, for a suitable choice of $\gamma$, we can prove that 
$$\displaystyle \frac{|(u(x)-u(x+z))^{p-1}+ (u(x)-u(x-z))^{p-1}| }{|z|^{N+ps}}\in L^1 (\RR^N)$$ and thus the conclusion follows as before.
\end{proof}
\subsection{Barrier function under fractional p-Laplacian}\label{Barrier}
\noi 
Our aim in this section is to construct a barrier function and then study the behavior of the barrier function under the fractional 
$p$-Laplacian. Later this result is used to obtain the apriori $L^\infty$ estimate in Theorem \ref{t64}.
To construct the barrier function, we start by considering a smooth perturbation of the distance function $d(x) := \text{dist}(x,\partial\Omega)$, where $x\in\Omega$ is a point in a smooth, bounded domain $\Omega\subset\mathbb{R}^N$ with $N\geq 2$. We define the set $\Omega_\delta= \{x\in \Omega : d(x,\partial\Omega)<\delta\}$ 
for some small $\delta>0$ such that $d(x)$ is well-defined and $C^2$ in $\Omega_\delta$. The barrier function $\xi(x)$ is then constructed as follows:
\begin{equation}\label{e62}\xi(x) =
	\begin{cases}
		d^{\beta}(x)     &\text{if}\thickspace x\in  \Omega_\delta\\
		r(x)  &\text{if}\thickspace	x\in \Omega\setminus \Omega_\delta\\
		0     &\text{if}\thickspace	x\in \Omega^{c}
	\end{cases}
\end{equation}
for $\beta >0$ and a function $r$ such that $\xi$ is positive and $C^{2}$ in $\Omega$
with \begin{equation}\label{e63}
	r(x)\geq d^{\beta}(x) \text{ for } x\in \Omega\setminus \Omega_\delta.
\end{equation}
The proof of the next lemma is inspired by section 3 of \cite{FQ}, where an estimate of a similar type is proved for the fractional Laplacian. However, since the operator we deal with is nonlinear, we have to estimate the integrals differently. We prove the estimate using a different technique that is tailored to the nonlinear operator. 
\begin{lemma}\label{l65} Let $\frac{2}{2-s}<p<\infty$ then there exist $\delta>0$, $C>0$ and  $0<\beta_0<\frac{sp}{p-1}<\infty$ such that for all  $\beta\in(0,\beta_0)$
	\begin{equation*}
		-(-\Delta)_p^s\xi(x)\leq -Cd(x)^{\beta(p-1)-sp}\quad \text{in}\quad \Omega_\delta.		
	\end{equation*}
\end{lemma}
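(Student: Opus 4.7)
The plan is to compare $\xi$ locally near $x_0 \in \Omega_\delta$ with a half-space barrier and exploit scaling to extract the correct rate $d(x_0)^{\beta(p-1)-sp}$. Let $d_0 := d(x_0)$, let $\bar x \in \partial\Omega$ be the unique nearest boundary point (well-defined for $\delta$ small), and let $e := \nabla d(x_0)$ be the inward unit normal at $\bar x$. Define the half-space barrier $\phi(y) := ((y-\bar x)\cdot e)_+^\beta$, so that $\phi(x_0) = d_0^\beta = \xi(x_0)$. Convexity of $\Omega$ makes the tangent plane at $\bar x$ a supporting hyperplane, so $d(y) \le (y-\bar x)\cdot e$ for every $y \in \Omega$. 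This yields $\xi(y) \le \phi(y)$ throughout $\Omega_\delta$ and trivially on $\Omega^c$; in particular, fixing $\eta := \delta/4$ and restricting to $d_0 < \delta/2$, the comparison $\xi \le \phi$ holds on all of $B(x_0,\eta)$.

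Invoking \autoref{l62} to drop the principal value and splitting the resulting integral at $|z|=\eta$, the tail $|z| \ge \eta$ contributes at most $C\eta^{-sp}$ in absolute value by boundedness of $\xi$. On the inner region $|z|<\eta$, monotonicity of $t \mapsto |t|^{p-2}t$ together with $\xi(x_0)=\phi(x_0)$ and $\xi \le \phi$ gives the pointwise inequality $(\xi(x_0)-\xi(x_0\pm z))^{p-1} \ge (\phi(x_0)-\phi(x_0\pm z))^{p-1}$. Hence
$$(-\Delta)_p^s\xi(x_0) \ge \int_{|z|<\eta}\frac{(\phi(x_0)-\phi(x_0+z))^{p-1}+(\phi(x_0)-\phi(x_0-z))^{p-1}}{|z|^{N+sp}}\,dz - C\eta^{-sp}.$$
The change of variables $z = d_0 w$, together with the identity $\phi(x_0+d_0 w) = d_0^\beta(1+w\cdot e)_+^\beta$, converts the inner integral into
$$d_0^{\beta(p-1)-sp}\int_{|w|<\eta/d_0}\frac{(1-(1+w\cdot e)_+^\beta)^{p-1}+(1-(1-w\cdot e)_+^\beta)^{p-1}}{|w|^{N+sp}}\,dw.$$

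As $d_0 \to 0$, the $w$-integral converges by dominated convergence to
$$I(\beta) := \int_{\RR^N}\frac{(1-(1+w\cdot e)_+^\beta)^{p-1}+(1-(1-w\cdot e)_+^\beta)^{p-1}}{|w|^{N+sp}}\,dw,$$
with integrability near $w=0$ supplied by the first-order cancellation of the two summands (producing an $O(|w|^{p-N-sp})$ integrand, $p-sp>0$) and at infinity by the hypothesis $\beta(p-1)<sp$. At $\beta=0$ the kernel collapses to $\mathbf 1_{\{|w\cdot e|\ge 1\}}$, which gives $I(0) = 2\int_{\{w\cdot e \ge 1\}}|w|^{-N-sp}\,dw > 0$. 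Continuity of $\beta \mapsto I(\beta)$ then furnishes a threshold $\beta_0 \in (0, sp/(p-1))$ with $I(\beta) \ge I(0)/2 > 0$ for $\beta \in (0,\beta_0)$. Since $\beta(p-1)-sp < 0$, the main term $d_0^{\beta(p-1)-sp}I(\beta)$ blows up as $d_0 \to 0$ and dominates the fixed error $C\eta^{-sp}$; shrinking $\delta$ if necessary, one concludes $(-\Delta)_p^s\xi(x_0) \ge C\,d(x_0)^{\beta(p-1)-sp}$ on $\Omega_\delta$, equivalent to the asserted bound.

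The main obstacle is the nonlinearity of $(-\Delta)_p^s$: unlike the linear case $p=2$ treated in \cite{FQ}, one cannot split $(-\Delta)_p^s\xi = (-\Delta)_p^s\phi + (-\Delta)_p^s(\xi-\phi)$, and must instead argue at the integrand level via the monotonicity $a\ge b \Rightarrow |a|^{p-2}a \ge |b|^{p-2}b$. This forces the use of the pointwise comparison $\xi \le \phi$, which convexity delivers only inside $\Omega_\delta$, and hence the splitting at $|z|=\eta$ with careful bookkeeping of the far-field error. A secondary technical point is the positivity of the model constant $I(\beta)$ for small $\beta$; this is reduced to continuity at $\beta=0$, where an explicit computation gives $I(0)>0$, using a uniform dominating function available precisely because $p > 2/(2-s)$ and $\beta(p-1) < sp$.
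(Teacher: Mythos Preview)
Your argument is correct for convex $\Omega$ and takes a genuinely different route from the paper's proof.

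The paper decomposes the integral over $B(0,\delta)$ into four pieces $I_1,I_{2_+},I_{2_-},I_3$ according to whether $x\pm y$ lie in $\Omega$, and estimates each directly. The most delicate piece is $I_3$ (both $x\pm y\in\Omega$), which is handled by a second--order Taylor expansion of $\xi=d^\beta$ together with the pointwise inequalities of \cite{Lindgren1} (their Lemmas~3.1, 3.4--3.6). You instead compare $\xi$ with the explicit half--space barrier $\phi(y)=((y-\bar x)\cdot e)_+^\beta$, use the pointwise comparison $\xi\le\phi$ on $B(x_0,\eta)$ and monotonicity of $t\mapsto |t|^{p-2}t$ to reduce everything to a single model integral $I(\beta)$, and then exploit continuity at $\beta=0$ to extract positivity. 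This is shorter and sidesteps the Taylor--expansion machinery entirely.

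The trade--off is that your proof uses convexity in an essential way (the tangent hyperplane at $\bar x$ must be supporting, so that $d(y)\le (y-\bar x)\cdot e$ on $\Omega$). The paper's argument, by contrast, does not; see the Remark at the end of Section~\ref{linf1}, which records that the results of Sections~\ref{linf}--\ref{linf1} hold without convexity. For the applications in Section~\ref{deg} your version suffices, since the paper assumes $\Omega$ convex globally, but the lemma as proved in the paper is strictly more general.

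Two minor points worth tightening: (i) the continuity of $\beta\mapsto I(\beta)$ at $\beta=0$ requires a dominating function uniform in $\beta\in(0,\beta_0)$; near $w=0$ the symmetrized numerator is $O(\beta^{p-1}|w\cdot e|^{p})$ and at infinity it is $O(|w|^{\beta_0(p-1)})$, so this is routine but should be stated; (ii) the final choice of $\delta$ must be shown uniform in $\beta\in(0,\beta_0)$, which follows since $I(\beta)\ge I(0)/2$ and $d_0^{\beta(p-1)-sp}\ge d_0^{\beta_0(p-1)-sp}$ for $d_0<1$, and since $I(\beta)$ is independent of the unit vector $e$ by rotational invariance.
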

\begin{proof}
	Throughout the proof, we assume that $x\in \Omega_\delta$. 
For simplicity in notation, 	
we write fractional $p$-Laplacian as
\begin{equation}\label{e64}
	-(-\Delta)_p^s \xi(x)=  \int_{\mathbb{R}^N}\frac{\varrho_{+}(\xi,x,y)+\varrho_{-}(\xi,x,y)}{|y|^{N+sp}} \,dy,\quad x\in \mathbb{R}^{N}.
\end{equation}
where
\begin{equation*}
	\varrho_{+}(\xi,x,y)=(\xi(x+y)-\xi(x))^{p-1}	
\end{equation*}
and 
\begin{equation*}
	\varrho_{-}(\xi,x,y)=(\xi(x-y)-\xi(x))^{p-1}	.
\end{equation*}	
Also
\begin{equation*}
    \begin{aligned}
       -(-\Delta)_p^s\xi(x)=\int_{ B(0,\delta)}&\frac{\varrho_{+}(\xi,x,y)+\varrho_{-}(\xi,x,y)}{|y|^{N+sp}}  \,dy
       \\
       &
       + \int_{ B(0,\delta)^c}\frac{\varrho_{+}(\xi,x,y)+\varrho_{-}(\xi,x,y)}{|y|^{N+sp}} \,dy 
    \end{aligned}
\end{equation*}
$$	 $$
Since $\xi$ is an $L^\infty$ function, it is easy to show that there is a constant $C_{1}$ such that 
	\begin{equation}\label{e65}
		\int_{ y\in B^{c}(0,\delta)}\frac{|\varrho_{+}(\xi,x,y)+\varrho_{-}(\xi,x,y)|}{|y|^{N+sp}} \,dy\leq C_{1}.	
	\end{equation}
	
	Now we will estimate the integral over $B(0,\delta)$.  We write
	\begin{equation}\label{e66}
		\int_{ B(0,\delta)}\frac{\varrho_{+}(\xi,x,y)+\varrho_{-}(\xi,x,y)}{|y|^{N+sp}} \,dy = I_{1}(x)+I_{2_{+}}(x)+I_{2_{-}}(x)+I_{3}(x),	
	\end{equation}

	where
	
	\begin{equation}\label{e67}
		\begin{split}
			I_{1}(x)=& \int_{D_{1}}  \frac{-2\xi(x)^{p-1}}{|y|^{N+sp}} \,dy ,\\
			I_{2_{+}}(x)=& \int_{D_{2_{+}}}\frac{\varrho_{+}(\xi,x,y)-\xi(x)^{p-1}}{|y|^{N+sp}} \,dy,\\
			I_{2_{-}}(x)=& \int_{D_{2_{-}}}\frac{\varrho_{-}(\xi,x,y)-\xi(x)^{p-1}}{|y|^{N+sp}} \,dy,\\		
		I_{3}(x)=& \int_{D_{3}}\frac{\varrho_{+}(\xi,x,y)+\varrho_{-}(\xi,x,y)}{|y|^{N+sp}} \,dy	
		\end{split}		
	\end{equation}
	with the following domains of integration.
	\begin{equation}\label{e68}
		\begin{split}
			D_{1}=& \{y\in B(0,\delta): x+y\notin \Omega \quad \text{and}\quad x-y\notin \Omega\},\\
			D_{2_{\pm}}=&\{y\in B(0,\delta): x \pm y\in \Omega \quad \text{and}\quad x\mp y\notin \Omega\},\\
			D_{3}=&\{y\in B(0,\delta): x+y\in \Omega \quad \text{and}\quad x-y\in \Omega\}.
		\end{split}
	\end{equation}
	For notational simplicity we denote $d=d(x)$, whenever there is no confusion. We shall show that each of the integrals $I_i$ is bounded above by $-C d^{\beta(p-1)-sp}.$ The first integral $I_{1}(x)$ can be estimated similar to \cite[Lemma 3.2]{FQ} and we obtain
 \begin{equation}\label{e69}
		I_{1}(x)\leq -C_{2}d^{\beta(p-1)-sp}.	
	\end{equation}
 We will estimate $I_{2_+}(x)$ in detail and $I_{2_-}(x)$ follows in the similar way. We start the estimation of $I_{2_+}(x)$ by making an important observation that $dist(d^{-1}x+z,d^{-1}\partial \Omega)) \leq d^{-1}(|z|+ \inf_{y \in \partial\Om} |x-y|). $ This implies   
	for $0<\beta\leq\beta_{0}<\frac{sp}{p-1},$
	\begin{equation*}
		dist(d^{-1}x+z,d^{-1}\partial \Omega))^{\beta}-1\leq  |z|^{\beta_{0}}	.		\end{equation*}
  The constant $\beta_0$ appearing in the above expression will be chosen later. 
We split the domain $d^{-1}D_{2_{+}}$ into two parts,
\begin{equation*}
		d^{-1}D_{2_{+}}=(d^{-1}D_{2_{+}}\cap B(0,R))\cup (d^{-1}D_{2_{+}}\cap B(0,R)^{c})=:B_{1,R}\cup B_{2,R}.
	\end{equation*}
Then we can write 	$I_{2_{+}}(x)$ as 
\begin{equation}\label{e611}
    \begin{aligned}
    I_{2_{+}}(x)= 	d^{\beta(p-1)-sp}&\int_{B_{1,R}}\frac{(dist(d^{-1}x+z,d^{-1}\partial \Omega)^{\beta}-1)^{p-1}}{|z|^{N+sp}}\,dz\\
    +d^{\beta(p-1)-sp}&\int_{B_{2,R}}\frac{dist(d^{-1}x+z,d^{-1}\partial \Omega)^{\beta}-1)^{p-1}}{|z|^{N+sp}}\,dz\\- d^{\beta(p-1)-sp}&\int_{d^{-1}D_{2_{+}}}\frac{1}{|z|^{N+sp}}\,dz
    \end{aligned}
\end{equation}
The last integral could be easily estimated by $-C_3 d^{\beta(p-1)-sp}.$ We will find a proper upper bound for the first two integrals in terms of $C_3$. Observe that, 
	\begin{equation*}
		\begin{split}
			\int_{B_{2,R}}\frac{dist(d^{-1}x+z,d^{-1}\partial \Omega)^{\beta}-1)^{p-1}}{|z|^{N+sp}}\,dz\leq \int_{B_{2,R}}\frac{|z|^{\beta_{0}(p-1)}}{|z|^{N+sp}}\,dz \;\;\leq \alpha_N R^{\beta_{0}(p-1)-sp}.
		\end{split}	
	\end{equation*}
 where $\alpha_N$ denotes the surface measure of unit sphere in $\mathbb{R}^N.$ Now we shall choose $R$ large enough so that $\alpha_N R^{\beta_{0}(p-1)-sp} = \frac{C_3}{4}.$ We also have 
	\begin{equation*}
		\lim_{\beta \rightarrow 0}\int_{B_{1,R}}\frac{dist(d^{-1}x+z,d^{-1}\partial \Omega)^{\beta}-1)^{p-1}}{|z|^{N+sp}}\,dz=0.
	\end{equation*}
 Now we shall choose $\beta_0$ small enough so that 
 $$\int_{B_{1,R}}\frac{dist(d^{-1}x+z,d^{-1}\partial \Omega)^{\beta}-1)^{p-1}}{|z|^{N+sp}}\,dz <\frac{C_3}{4} \mbox{  for all } 0<\beta<\beta_0.$$
Combining all the above arguments, there exists $ \beta_0<<1 $ and $C_4>0$ such that
\begin{equation}\label{e70}
     \begin{aligned}
        I_{2_+}(x)\leq -C_{4}d^{\beta(p-1)-sp} \thickspace \text{for} \ \beta\in (0,\beta_0) \thickspace 
     \end{aligned}
 \end{equation}
Finally, we will estimate $I_{3}(x)$ 
 by splitting the domain of its integration $$D_3= B(0,\epsilon d)\cup (D_{3}\setminus B(0,\epsilon d))=:B_{1}\cup B_{2}.$$
 We rewrite the $I_3(x)$ as
 \begin{equation}\label{e313}
     \begin{aligned}
        I_3(x) =\int_{B_{2}}\frac{\varrho_{+}(\xi,x,y)+\varrho_{-}(\xi,x,y)}{|y|^{N+sp}} \,dy	+ \int_{B_{1}}& \frac{\varrho_{+}(\xi,x,y)}{|y|^{N+sp}} \,dy 
        \\
        &+ \int_{B_{1}}\frac{\varrho_{-}(\xi,x,y)}{|y|^{N+sp}} \,dy
     \end{aligned}
 \end{equation}
On $B_{2}$ the estimation of $I_{3}(x)$ is similar to $I_{2_{+}}(x)$ hence 
\begin{equation}\label{e314}
    \begin{aligned}
     \int_{B_{2}}\frac{\varrho_{+}(\xi,x,y)+\varrho_{-}(\xi,x,y)}{|y|^{N+sp}} \,dy \leq  -C_{4}d^{\beta(p-1)-sp}  
    \end{aligned}
\end{equation}
 We only estimate 
	$\displaystyle\int_{B_1}\frac{\varrho_{+}(\xi,x,y)}{|y|^{N+sp}} \,dy$ , as the estimation of 
$\displaystyle \int_{B_1}\frac{\varrho_{-}(\xi,x,y)}{|y|^{N+sp}} \,dy$ follows using the similar arguments. 
 
Since $\xi$ is a $C^2$ function, using Taylor's expansion, we have
 \begin{equation*}
		\xi(x+y)=\xi(x)+\nabla\xi(x)\cdot y+y^\top\cdot  D^{2}\xi(\alpha)\cdot y \quad\text{for some}\quad \alpha= x+\theta y,\;\theta\in(0,1).
	\end{equation*}
	We denote 
	\begin{equation*} 
		l(x+y)=\xi(x)+\nabla\xi(x)\cdot y,
	\end{equation*}
	so that we have
	\begin{equation*}
		\xi(x+y)-l(x+y)=y^\top\cdot  D^{2}\xi(\alpha)\cdot y.	
	\end{equation*}
	Similarly, we have
	\begin{equation*}
	    \begin{aligned}
	    \xi(x)=\xi(x+y)+\nabla\xi(x+y)\cdot (-y)+(-y)^\top\cdot&  D^{2}\xi(\gamma)\cdot (-y)
	    \\
	    \text{for some}\;\; &\gamma= x+y-\theta y,\theta\in(0,1).
	    \end{aligned}
	\end{equation*}
	\begin{equation*}
		 \quad	
	\end{equation*}
	We denote
	\begin{equation*}
		l(x)=\xi(x+y)+\nabla\xi(x+y)\cdot (-y),	
	\end{equation*}
	so we have
	\begin{equation*}
		\xi(x)-l(x)=(-y)^\top\cdot  D^{2}\xi(\gamma)\cdot (-y)
	\end{equation*}
	and
	\begin{equation*}
		l(x+y)-l(x)=\nabla\xi(x)\cdot y.	
	\end{equation*}
	Let	$g(t):=|t|^{p-2}t$ and using \cite[Lemma 3.1 and 3.4]{Lindgren1}, we have
	\begin{equation}\label{eqn312}
		\begin{split}
			&\left|\int_{B(0,\epsilon d)}\frac{(\xi(x+y)-\xi(x))^{p-1}}{|y|^{N+sp}}\,dy\right|\\
			&\leq\int_{B(0,\epsilon d)}\frac{|g(\xi(x+y)-\xi(x))-g(l(x+y)-l(x))|}{|y|^{N+sp}}\,dy\\
			&\leq C_{5}\int_{B(0,\epsilon d)}\frac{(|l(x+y)-l(x)|+|\xi(x)-l(x)|)^{p-2}|\xi(x)-l(x)|}{|y|^{N+sp}}\,dy\\
			&\leq C_{5} \int_{B(0,\epsilon d)}\frac{(|\nabla\xi(x)\cdot y|+|(-y)^\top\cdot  D^{2}\xi(\gamma)\cdot (-y)|)^{p-2}|(-y)^\top\cdot  D^{2}\xi(\gamma)\cdot (-y)|}{|y|^{N+sp}}\,dy.
		\end{split}
	\end{equation}
		Since $x\in \Omega_\delta$, $\xi(x)=d^{\beta}(x)$. Then
	\begin{equation*}
		\begin{split}
			\nabla\xi(x)&=\beta d^{\beta -1}\nabla d(x) \mbox{ and } 
			\frac{\partial^{2}\xi(x)}{\partial x_{i}\partial x_{j}}=\beta (\beta-1)d^{\beta-2}A_{ij}(x) \text{ for } 1\leq i,j\leq N,
		\end{split}	
	\end{equation*}
	where
	\begin{equation*}
		A_{ij}(x)=\frac{\partial d(x)}{\partial x_{i}}\frac{\partial d(x)}{\partial x_{j}}+\frac{d(x)}{\beta -1}\frac{\partial^{2}d(x)}{\partial x_{i}\partial x_{j}}.
	\end{equation*} Then for $\gamma=x+y-\theta y, \thinspace \theta\in(0,1)$ and $x\in \Omega_\delta, y\in B(0,\epsilon d),$  we denote
	\begin{equation*}
	 M_{i,j}:=	\sup_{\gamma}\beta(\beta-1)A_{ij}(\gamma), 
	\end{equation*}
	Thus we have $|D^{2}\xi(\gamma)|\leq M d^{\beta-2}$ for some $M>0$.
	 Hence from \eqref{eqn312} and with a change of variable $y=d(x)z$ we have, 
	 \begin{equation*}
		\begin{split}
			&\left|\int_{B(0,\epsilon d)}\frac{(\xi(x+y)-\xi(x))^{p-1}}{|y|^{N+sp}}\,dy\right|\\
			&\leq C_{6}d^{\beta (p-1)-sp}\int_{B(0,\epsilon)}\frac{(|\beta \nabla d(x)\cdot z|+M|z|^{2})^{p-2}M|z|^{2}}{|z|^{N+sp}}\,dz\\
			&\leq C_{6}d^{\beta (p-1)-sp}\int_{0}^{\epsilon} \int_{S^{N}}\left(|\beta \nabla d(x)\cdot \omega|r+Mr^{2}\right)^{p-2}Mr^{1-sp}\,d\omega\,dr\\
   	&\leq C_{6}d^{\beta (p-1)-sp}\int_{0}^{\epsilon} \int_{S^{N}}\left(\frac{|\beta \nabla d(x)\cdot \omega|}{|\beta \nabla d(x)|}+\frac{Mr}{|\beta \nabla d(x)|}\right)^{p-2}
Mr^{p-1-sp}|\beta \nabla d(x)|^{p-2}\,d\omega\,dr\\
			&\leq C_{6}Md^{\beta (p-1)-sp}\int_{0}^{\epsilon} \left(1+\frac{Mr}{|\beta \nabla d(x)|}\right)^{p-2}r^{p-1-sp}|\beta \nabla d(x)|^{p-2}\,dr 	
		\end{split}	
	\end{equation*}
 The last inequality here is obtained by applying \cite[Lemma 3.5]{Lindgren1}.  We can now directly use the estimates exactly as in \cite[Lemma 3.6]{Lindgren1} and for the case $p\ge 2$ we get 
 \begin{equation}
     \begin{aligned}\label{e613}
			\bigg|\int_{B(0,\epsilon d)}&\frac{(\xi(x+y)-\xi(x))^{p-1}}{|y|^{N+sp}}\,dy\bigg|\\
			&\hspace{2.2cm}\leq C_{5}Md^{\beta (p-1)-sp}\left(|\beta \nabla d(x)|^{p-2}\epsilon^{p-sp}+M^{p-2}\epsilon^{p-2+p(1-s)}\right) \\
   &\hspace{8.6cm} \leq C_6 d^{\beta (p-1)-sp}\e^{p-sp}
		\end{aligned}
 \end{equation}
		and for $\frac{2}{2-s}<p<2,$
\begin{equation}\label{ee614}
   \left|\int_{B(0,\epsilon d)}\frac{(\xi(x+y)-\xi(x))^{p-1}}{|y|^{N+sp}}\,dy\right|\leq C_{6}d^{\beta (p-1)-sp}\epsilon^{p-2+p(1-s)}.\nonumber
\end{equation}	
In both the cases choosing $\e$ small enough we have 
\begin{equation}\label{e316}
    \begin{aligned}
\int_{B_{1}}\frac{\varrho_{+}(\xi,x,y)}{|y|^{N+sp}} \,dy + \int_{B_{1}}\frac{\varrho_{-}(\xi,x,y)}{|y|^{N+sp}} \,dy \leq \frac{C_4}{2} d^{\beta (p-1)-sp}        
    \end{aligned}
\end{equation}
Using \eqref{e313} \eqref{e314} and \eqref{e316} we conclude that 
\begin{equation}
    \begin{aligned}
        I_3(x)\leq -C_7 d^{\beta(p-1)-sp}
    \end{aligned}
\end{equation}
This concludes the proof of the Lemma.

\end{proof}

\noi We define for $x_0\in\partial\Omega,\thickspace\tau>0$
\begin{equation*}
	\Omega^{\tau}_{x_0}:=\left\lbrace x\in \mathbb{R}^{N}:x_0+\tau x\in \Omega\right\rbrace .
\end{equation*}
\noi Also for the function $\xi$ defined in \eqref{e62}, we set
${\xi^{\tau}_{x_0}}(x):=\xi(x_0+\tau x)$
and define
\begin{equation}\label{e616}
	d_{\tau}(x):=dist(x,\partial\Omega^{\tau}_{x_0})=\tau^{-1}dist(x_0+\tau x,\partial\Omega).
\end{equation}

\begin{lemma}\label{l66}
	Let $s\in(0,1)$ and $\frac{2}{2-s}< p<\infty$. Also let $\beta=sp-\theta$ in \eqref{e62} for $\theta\in\left(  sp-\beta_0,sp\right)$ for some $\beta_0 \in (0,\frac{sp}{p-1})$. Then there exist $c_{0},\delta >0$ such that 
	\begin{equation}\label{e617}
		(-\Delta)_{p}^s \xi^{\tau}_{x_0}\ge c_{0}d^{sp^{2}-2sp-\theta(p-1)}_{\tau}\quad \text{ in }\thickspace (\Omega^{\tau}_{x_0})_{\delta},\thickspace 0<\tau<1.
	\end{equation}	 
	Moreover, if $u\in L^{\infty}(\RR^N)$ satisfies the following in viscosity sense  for $c_1>0$
	\begin{eqnarray}  \left\{
		\begin{array}{rl}
			(-\Delta)_{p}^s u &\leq c_{1}d^{sp^{2}-2sp-\theta(p-1)}_\tau \quad \text{ in }\Omega^{\tau}_{x_0}, \\
			u &=0  \quad \hspace{2.6cm}\text{ on }(\Omega^{\tau}_{x_0})^{c}.
		\end{array} 
		\right. 
	\end{eqnarray}
	then 
	\begin{equation}\label{e618}
		u(x)\leq c_{2}\left( c_{1}+\|u\|_{L^{\infty}(\Omega^{\tau}_{x_0})}\right) d^{sp-\theta}_{\tau}, \thickspace x\in (\Omega^{\tau}_{x_0})_{\delta}
	\end{equation}
	for some $c_{2}>0$ is only depending on $s,\delta,\theta,p$ and $c_{0}$. 
\end{lemma}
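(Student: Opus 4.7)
The plan is to prove the two parts of the lemma separately, with the first providing a barrier whose behaviour under $(-\Delta)_p^s$ is then leveraged in the comparison argument of the second.

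For the first estimate \eqref{e617}, the starting point is the standard affine scaling of the fractional $p$-Laplacian. A direct change of variables in the integral representation \eqref{e61} yields, for $\xi^\tau_{x_0}(x) = \xi(x_0+\tau x)$, the identity
\begin{equation*}
(-\Delta)_p^s \xi^\tau_{x_0}(x) \;=\; \tau^{sp}\,(-\Delta)_p^s \xi(x_0+\tau x).
\end{equation*}
Since $d(x_0+\tau x)=\tau\,d_\tau(x)$, for $x\in (\Omega^\tau_{x_0})_\delta$ and $\tau\in(0,1)$ the image $x_0+\tau x$ lies in $\Omega_{\tau\delta}\subset\Omega_\delta$, so Lemma~\ref{l65} is applicable. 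With the choice $\beta=sp-\theta$, which belongs to $(0,\beta_0)$ precisely because $\theta\in(sp-\beta_0,sp)$, we obtain
\begin{equation*}
(-\Delta)_p^s\xi^\tau_{x_0}(x)\;\geq\; C\,\tau^{sp}(\tau d_\tau(x))^{\beta(p-1)-sp} \;=\; C\,\tau^{\beta(p-1)}\,d_\tau(x)^{sp^2-2sp-\theta(p-1)}.
\end{equation*}
The exponent in $d_\tau$ matches \eqref{e617}, and the $\tau$-dependent prefactor will be absorbed by the choice of barrier amplitude in the second step.

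For the second estimate \eqref{e618}, I would construct an explicit supersolution of the form $w=K\,\xi^\tau_{x_0}$ and invoke the viscosity comparison principle for $(-\Delta)_p^s$ (cf.\ \cite{Barrios20,Lindgren1}). The homogeneity $(-\Delta)_p^s(Kv)=K^{p-1}(-\Delta)_p^s v$, together with the lower bound from the first part, reduces the inequality $(-\Delta)_p^s w\geq c_1 d_\tau^{sp^2-2sp-\theta(p-1)}$ on $(\Omega^\tau_{x_0})_\delta$ to a condition of the form $K^{p-1}\tau^{\beta(p-1)}\gtrsim c_1$. Simultaneously, on the annular region $\Omega^\tau_{x_0}\setminus(\Omega^\tau_{x_0})_\delta$ the construction \eqref{e62}--\eqref{e63} guarantees $\xi^\tau_{x_0}(x)\geq (\tau\delta)^\beta$, so enforcing $w\geq u$ there demands $K(\tau\delta)^\beta\gtrsim\|u\|_{L^\infty(\Omega^\tau_{x_0})}$. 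On $(\Omega^\tau_{x_0})^c$ both $u$ and $w$ vanish. Choosing $K=C\tau^{-\beta}\bigl(c_1^{1/(p-1)}+\|u\|_{L^\infty(\Omega^\tau_{x_0})}\bigr)$ meets both requirements, and the comparison principle yields $u\leq w$ in $(\Omega^\tau_{x_0})_\delta$. Since $\xi^\tau_{x_0}(x)=\tau^\beta d_\tau(x)^\beta$ in this region, the factors of $\tau$ cancel and one recovers $u(x)\leq c_2\bigl(c_1+\|u\|_{L^\infty(\Omega^\tau_{x_0})}\bigr)d_\tau(x)^{sp-\theta}$.

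The main obstacle is the careful tracking of $\tau$-powers so that the final constant $c_2$ in \eqref{e618} depends only on $s,\delta,\theta,p,c_0$ and not on $\tau$ or on $x_0\in\partial\Omega$. A secondary, but non-trivial, point is invoking the viscosity comparison principle at this level of generality for the nonlinear nonlocal operator: one has to verify that $K\,\xi^\tau_{x_0}$ is a legitimate viscosity test function (which is where $\xi\in C^2(\Omega)$ is essential) and that the inequality $u-w\leq 0$ can be propagated from the exterior of $(\Omega^\tau_{x_0})_\delta$ into its interior through the standard argument for nonlocal operators.
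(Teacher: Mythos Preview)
Your proposal is correct and follows essentially the same route as the paper: both construct a supersolution of the form $R\,\xi^\tau_{x_0}$ and apply the comparison principle, choosing $R$ large enough to dominate $u$ on $\Omega^\tau_{x_0}\setminus(\Omega^\tau_{x_0})_\delta$ and to force $(-\Delta)_p^s(R\xi^\tau_{x_0})\geq c_1 d_\tau^{sp^2-2sp-\theta(p-1)}$ in $(\Omega^\tau_{x_0})_\delta$. The only cosmetic difference is that the paper obtains \eqref{e617} by instructing the reader to repeat the calculations of Lemma~\ref{l65} with $\xi$ translated to $\xi^\tau_{x_0}$, and then takes $R=(c_1/c_0)^{1/(p-1)}+(\tau\delta)^{\theta-sp}\|u\|_{L^\infty(\Omega^\tau_{x_0})}$; you instead derive the lower bound on $(-\Delta)_p^s\xi^\tau_{x_0}$ via the scaling identity $(-\Delta)_p^s\xi^\tau_{x_0}(x)=\tau^{sp}(-\Delta)_p^s\xi(x_0+\tau x)$ and explicitly track the resulting $\tau^{\beta(p-1)}$ prefactor through to the same $\tau$-independent bound in \eqref{e618}.
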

\begin{proof}
    Observe that for
	$x\in(\Omega^{\tau}_{x_0})_\delta$ we have
	$x_0+\tau x\in(\Omega)_{\tau\delta}\subset(\Omega)_{\delta}$ as $\tau<1$.
		The rest of the argument for establishing \eqref{e617} follows using similar calculations as in the proof of the Lemma \ref{l65} by taking $\beta=sp-\theta$ and translating $\xi(x)$ to $\xi^\tau_{x_0}(x)$ and 
	$(\Omega)_{\delta}$ to $(\Omega^{\tau}_{x_0})_\delta$.\\
Next, to obtain \eqref{e618}, we define $v=R\xi^{\tau}_{x_0}$ where
\begin{equation*}
    \begin{aligned}
    R=\left( \frac{c_{1}}{c_{0}}\right)^{1/(p-1)}+(\tau\delta)^{\theta-sp}\|u\|_{L^{\infty}(\Omega^{\tau}_{x_0})}.
    \end{aligned}
\end{equation*}
and observe that 
	\begin{eqnarray}\label{e620}  \left\{
		\begin{array}{rl}
				(-\Delta)_{p}^sv &\ge (-\Delta)_{p}^su   \quad \text{ in }(\Omega^{\tau}_{x_0})_{\delta}, \\
			u = v &=0  \quad \hspace{1.2cm}\text{ on }(\Omega^{\tau}_{x_0})^{c}\\
			v &\ge u \hspace{1.6cm}\text{ in }\Omega^{\tau}_{x_0}\setminus (\Omega^{\tau}_{x_0})_{\delta}.
		\end{array} 
		\right. 
	\end{eqnarray}
		Using the comparison principle  we can get 
	\begin{equation*}
		u(x)\leq c_{2}\left( (c_{1})^{1/(p-1)}+\|u\|_{L^{\infty}(\Omega^{\tau}_{x_0})}\right) d^{sp-\theta}_{\tau}\quad \text{ in }\thickspace (\Omega^{\tau}_{x_0})_{\delta}
	\end{equation*}
	where $c_{2}=\left( (\tau\delta)^{\theta-sp}+(c_{0})^{-1/(p-1)}\right)$.
\end{proof}

\section{An apriori uniform boundedness estimate for subcritical problem}\label{linf1}
\noi In this section we give the uniform $L^\infty$ estimates for the solutions of the semipositone subcritical problem. 
\noi We recall the local H\"{o}lder estimates for viscosity solutions from  \cite[Theorem 1]{HL1}.
\begin{theorem}\label{t62} 
For $1<p<\infty$, assume $f\in C(B_{2}(0))\cap L^{\infty}(B_{2}(0))$ and let $u\in L^{\infty}(\mathbb{R}^{N})$ be viscosity solution of 
	\begin{equation*}
		(-\Delta)_{p}^s u =f\quad \text{ in } B_{2}(0)
		.
	\end{equation*}
	Then $u$ is H\"older continuous in $B_{1}(0)$ and in particular there exist $\alpha\in(0,1)$ and $c$ depending on $s,p$ such that 
	\begin{equation}\label{e615}
		\|u\|_{C^{\alpha}(B_{1}(0))}\leq c\left( \|u\|_{L^{\infty}(\mathbb{R}^{N})}+\|f\|^{\frac{1}{p-1}}_{L^{\infty}(B_{2}(0))}\right).
	\end{equation}
\end{theorem}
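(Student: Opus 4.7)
The plan is to prove this interior Hölder estimate via the standard two-step regularity strategy: first normalize the problem by scaling, then establish an oscillation-decay lemma which, iterated geometrically, yields $C^{\alpha}$ regularity. For the first step, set $M:=\|u\|_{L^{\infty}(\mathbb{R}^{N})}+\|f\|_{L^{\infty}(B_{2})}^{1/(p-1)}$ and pass to $v:=u/M$. Using the $(p-1)$-homogeneity of the operator, namely $(-\Delta)_{p}^{s}(\lambda v)=|\lambda|^{p-2}\lambda\,(-\Delta)_{p}^{s}v$, the function $v$ is a viscosity solution of $(-\Delta)_{p}^{s}v=\tilde f$ in $B_{2}$ with $\|v\|_{L^{\infty}(\mathbb{R}^{N})}\le 1$ and $\|\tilde f\|_{L^{\infty}(B_{2})}\le 1$. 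Thus it suffices to prove the estimate with $M=1$ on the right-hand side and conclude by undoing this scaling.

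The core step I would isolate is an oscillation-decay lemma: there exist constants $\lambda\in(0,1)$ and $r_{0}\in(0,1)$, depending only on $N,s,p$, such that every normalized viscosity solution $v$ as above satisfies
\begin{equation*}
\underset{B_{r_{0}}}{\mathrm{osc}}\,v\;\le\;\lambda\,\underset{B_{1}}{\mathrm{osc}}\,v + C\|\tilde f\|_{L^{\infty}(B_{1})}^{1/(p-1)}.
\end{equation*}
The proof proceeds by a dichotomy: writing $m:=\inf_{B_{1}}v$, $M:=\sup_{B_{1}}v$, and $\mu:=(m+M)/2$, one of the superlevel sets $\{v\ge\mu\}$ or $\{v\le\mu\}$ occupies at least half of $B_{1/2}$. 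In the first case, $w:=v-m\ge 0$ is a nonnegative viscosity supersolution (up to a bounded forcing) of an equation of fractional $p$-Laplace type, and $w\ge (M-m)/2$ on a set of definite measure. A nonlocal weak Harnack inequality — obtained either by a De Giorgi level-set iteration or by comparison with an explicit subsolution barrier supported in $B_{1/2}$ — then yields a pointwise lower bound $w\ge c(M-m)$ on $B_{r_{0}}$, which strictly reduces the oscillation. The nonlocal tail contribution is controlled by $\|v\|_{L^{\infty}(\mathbb{R}^{N})}\le 1$, which is exactly why the hypothesis requires a \emph{global} bound on $u$.

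With the oscillation-decay lemma in hand, the iteration is routine. Apply it to the rescaled and recentered functions $v_{k}(x):=v(r_{0}^{k}x)-\tfrac12(\sup_{B_{r_{0}^{k}}}v+\inf_{B_{r_{0}^{k}}}v)$, noting that $(-\Delta)_{p}^{s}v_{k}(x)=r_{0}^{ksp}\tilde f(r_{0}^{k}x)$, so the forcing term shrinks geometrically while the normalization $\|v_{k}\|_{L^{\infty}}\le 1$ is preserved. Summing the resulting geometric series yields $\underset{B_{r_{0}^{k}}}{\mathrm{osc}}\,v\le C\lambda^{k}$ for every $k\ge 0$, which translates into $v\in C^{\alpha}(B_{1})$ with $\alpha:=\log\lambda/\log r_{0}\in(0,1)$ and a constant depending only on $N,s,p$. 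Reversing the initial normalization gives the announced bound \eqref{e615}.

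The principal obstacle is the oscillation-decay lemma, and specifically the nonlocal weak Harnack inequality underlying it. Unlike the linear fractional Laplacian, the $p$-Laplace operator is nonlinear, so one cannot split off the local part or superpose solutions; one must work entirely in the viscosity (or equivalent weak) framework and construct sharp barriers that respect the nonlinearity. The natural tool here is the weak Harnack inequality of Di Castro–Kuusi–Palatucci for $(s,p)$-minimizers, combined with the weak/viscosity equivalence recalled in Section~2, which together handle both the tail behavior at infinity and the nonlinearity in a unified way.
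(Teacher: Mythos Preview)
The paper does not prove this result; Theorem~\ref{t62} is quoted from \cite[Theorem~1]{HL1} and invoked as a black box in Section~4. Your sketch goes well beyond that, and the overall plan---normalize by $(p-1)$-homogeneity, prove an oscillation-decay lemma via weak Harnack, iterate---is indeed the route taken in the cited reference.

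That said, there is a real gap in how you handle the nonlocal tail. When you apply the weak Harnack inequality to $w:=v-m$, this function is nonnegative on $B_{1}$ but can be as negative as $-(1+|m|)$ on $\mathbb{R}^{N}\setminus B_{1}$; the weak Harnack estimate therefore carries a tail term $\mathrm{Tail}(w_{-};1)$ of size $O(\|v\|_{L^{\infty}})=O(1)$, \emph{not} of size $O(\mathrm{osc}_{B_{1}}v)$. The oscillation lemma your argument actually delivers reads
\[
\mathrm{osc}_{B_{r_{0}}}v\le\lambda\,\mathrm{osc}_{B_{1}}v+C\|\tilde f\|_{\infty}^{1/(p-1)}+C,
\]
and in the iteration that final additive constant persists: your rescaled functions $v_{k}$ still satisfy only $\|v_{k}\|_{L^{\infty}}\le 2$ and nothing finer, so the recursion $a_{k+1}\le\lambda a_{k}+C$ does not force $a_{k}\to 0$.

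The standard remedy, used in \cite{HL1} (following Silvestre and Di~Castro--Kuusi--Palatucci), is to carry the induction hypothesis at all previous scales simultaneously: one assumes $m_{j}\le v\le M_{j}$ on $B_{r_{0}^{j}}$ with $M_{j}-m_{j}\le\lambda^{j}$ for every $j\le k$, and uses these earlier bounds to control the negative tail of the rescaled function annulus by annulus. The resulting tail is then a convergent geometric series which can be made as small as desired by taking $r_{0}$ small and $\lambda$ close to~$1$ subject to $\lambda^{p-1}>r_{0}^{sp}$, after which it is absorbed into the Harnack constant. This scale-by-scale bookkeeping of the tail is the missing ingredient without which your iteration does not close.
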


\begin{theorem}\label{t64}
Assume that, $0<s<1,$ and $\frac{2}{2-s}<p< \infty,\thinspace	p-1<q<\pst -1$, and $g\in C(\overline{\Omega}\times\mathbb{R})$ satisfies that $|g(x,z)|\leq c|z|^{r},\thinspace x\in \Omega,\thinspace z\in \mathbb{R}$ where $c>0$ and $0<r<q$. Let $u$ be a positive viscosity solution to the problem
	\begin{eqnarray} \label{p63}  \left\{
		\begin{array}{rl}
			(-\Delta)_p^s u(x) &=u^{q}+g(x,u) \hspace{.50cm} \text{ in }\Omega, \\
			u(x) &=0  \hspace{2.42cm} \text{ on }\Omega^{c}.
		\end{array} 
		\right. 	
	\end{eqnarray}
	Then there exists a constant $C>0$ such that 
	\begin{equation*}
		\|u\|_{L^{\infty}(\Omega)}\leq C.
	\end{equation*}
\end{theorem}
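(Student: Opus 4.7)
\medskip

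\noindent\textbf{Proof plan for Theorem \ref{t64}.}

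The plan is to argue by contradiction via a Gidas--Spruck type blow-up analysis, using \autoref{l66} to rule out the concentration on the boundary and the assumption $(\mathcal{NA})$ to exclude the two possible limit problems. Suppose the conclusion fails. Then there exists a sequence $\{u_k\}$ of positive viscosity solutions of \eqref{p63} with $M_k:=\|u_k\|_{L^\infty(\Omega)}\to\infty$, and (possibly after applying a doubling lemma in the spirit of Pol\'a\v{c}ik--Quittner--Souplet) we can select $x_k\in\Omega$ such that $u_k(x_k)$ is comparable to $M_k$. Motivated by the scaling of $(-\Delta)_p^s u=u^q$, set
\begin{equation*}
\tau_k:=M_k^{-(q-p+1)/(sp)}\longrightarrow 0,\qquad v_k(y):=\frac{1}{M_k}\,u_k(x_k+\tau_k y),\qquad y\in\Omega^{\tau_k}_{x_k}.
\end{equation*}
Then $\|v_k\|_\infty\le 1$, $v_k(0)$ is bounded away from $0$, and a direct computation shows that, in the viscosity sense,
\begin{equation*}
(-\Delta)_p^s v_k(y) \;=\; v_k(y)^q \;+\; M_k^{r-q}\,\widetilde g_k(y),
\end{equation*}
with $|\widetilde g_k|\le c\,v_k^r$; since $r<q$ the remainder tends uniformly to $0$.

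The next step is to pass to a limit. The right-hand side of the $v_k$ equation is uniformly bounded in $L^\infty$, so \autoref{t62} (applied on balls that stay inside the rescaled domain) yields a uniform interior $C^{\alpha}$ bound. Combined with $\|v_k\|_\infty\le 1$, the standard stability of viscosity solutions for $(-\Delta)_p^s$ (as developed in \cite{Barrios20,Lindgren1}) gives, along a subsequence, $v_k\to v$ locally uniformly, where $v\ge 0$, $v\not\equiv 0$, and $v$ is a viscosity solution of $(-\Delta)_p^s v=v^q$ on the limiting domain. The identification of that domain splits into two cases according to the behaviour of $\rho_k:=d(x_k)/\tau_k$.

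The key step, and the main obstacle, is to control the boundary case. If along a subsequence $\rho_k\to\infty$, the domains $\Omega^{\tau_k}_{x_k}$ exhaust $\mathbb{R}^N$ and the limit $v$ solves $(-\Delta)_p^s v=v^q$ on $\mathbb{R}^N$, contradicting $(\mathcal{NA})$. Otherwise $\rho_k$ stays bounded, and we must first exclude the possibility $\rho_k\to 0$: here is where \autoref{l66} enters. Choose $\theta\in(sp-\beta_0,sp)$ so that $\theta$ is admissible in the lemma. Since $0<v_k\le 1$ and its equation has uniformly bounded right-hand side, we may apply \autoref{l66} (with $c_1$ a universal constant independent of $k$) to the rescaled problem on $\Omega^{\tau_k}_{x_k}$, obtaining
\begin{equation*}
v_k(y)\le c_2\,d_{\tau_k}(y)^{sp-\theta}\qquad\text{for } y\in(\Omega^{\tau_k}_{x_k})_\delta.
\end{equation*}
Evaluating at $y=0$ (which lies in $(\Omega^{\tau_k}_{x_k})_\delta$ once $\rho_k<\delta$) and using $v_k(0)\ge c>0$ forces $\rho_k\ge c_*$ for some $c_*>0$ independent of $k$. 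Thus, along a subsequence, $\rho_k\to d_\infty\in[c_*,\infty)$.

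Finally, when $\rho_k$ is bounded, after rotating so that the inward normal to $\partial\Omega$ at the closest point aligns with a fixed direction, the smoothness of $\partial\Omega$ implies that $\Omega^{\tau_k}_{x_k}$ converges (in the Hausdorff sense on compact sets) to a half-space $\mathcal{H}\subset\mathbb{R}^N$, and $v_k\to v$ locally uniformly with $v\equiv 0$ on $\mathbb{R}^N\setminus\mathcal{H}$. The local H\"older estimate together with the uniform decay from \autoref{l66} (which transfers to the limit as a pointwise bound $v(y)\le c_2\,\operatorname{dist}(y,\partial\mathcal{H})^{sp-\theta}$) places $v$ in $C^\alpha(\mathcal{H})$, so $v$ is a nontrivial viscosity solution of \eqref{p62} on $\mathcal{H}$. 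This again contradicts $(\mathcal{NA})$. The contradiction shows $M_k$ cannot be unbounded, proving the claimed a priori estimate. The two technical points that require the most care are the admissibility of $0$ as a test point in \autoref{l66} (handled by the boundary regularity of $\Omega$ and the explicit form of $\xi^{\tau}_{x_0}$) and the passage to the viscosity limit equation in the half-space geometry, for which the uniform Hölder estimates of \autoref{t62} are exactly what is needed.
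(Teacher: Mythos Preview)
Your proposal is correct and follows essentially the same Gidas--Spruck blow-up scheme as the paper, with the same use of \autoref{l66} to rule out boundary concentration and of $(\mathcal{NA})$ to derive the contradiction in both the whole-space and half-space cases. The only cosmetic difference is that, in the bounded-$\rho_k$ case, the paper re-centers the rescaling at the nearest boundary point $\zeta_k\in\partial\Omega$ (introducing $w_k(y)=M_k^{-1}u_k(\zeta_k+\mu_k y)$ on $D^k$ with $0\in\partial D^k$) so that \autoref{l66} applies verbatim (it is stated for $x_0\in\partial\Omega$) and the convergence $D^k\to\mathbb{R}^N_+$ is immediate; your version keeps the center at $x_k$, which is equivalent after a translation by $y_k=(x_k-\zeta_k)/\tau_k$, and the doubling lemma you mention is unnecessary since the maximum of $u_k$ is attained.
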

\begin{proof}
We begin our proof by assuming the existence of a sequence of positive solutions $\{u_{k}\}$ of problem \eqref{p63} such that $M_{k}=\|u_{k}\|_{L^{\infty}(\Omega)}\rightarrow \infty$.
For $x_{k}\in \Omega$ such that $u_{k}(x_{k})=M_{k}$, we define the functions \begin{equation*}
		v_{k}(y)=\frac{u_{k}(x_{k}+\mu_{k}y)}{M_{k}},\thickspace y\in\Omega^{k},
	\end{equation*}
	where
	\begin{equation*}
	    \begin{aligned}
	    	\Omega^{k}:=\left\lbrace y\in \mathbb{R}^{N}:x_{k}+\mu_{k}y\in \Omega\right\rbrace ,
		\\
		\mu_{k}= M^{\frac{-(q-(p-1))}{sp}}_{k}\in \mathbb{R} \longrightarrow 0,
	    \end{aligned}
	\end{equation*}
and functions $v_{k}$ satisfies $0<v_{k}\leq 1$ and $v_{k}(0)=1$. Next we have
\begin{equation}\label{e621}
		(-\Delta)_p^s 	v_{k}(y)= v^{q}_{k}+h_{k}\quad \text{in}\quad 	\Omega^{k}.
	\end{equation}
	where $|h_{k}|\leq C M^{r-q}_{k}$ and $h_{k}\in C(\Omega^{k})$. Next passing to subsequences, there are two cases, either $d(x_{k})\mu^{-1}_{k}\rightarrow\infty$ or $d(x_{k})\mu^{-1}_{k}\rightarrow \Tilde{d}$ for some non-negative constant $\Tilde{d}$. The first case implies $\Omega_{k}\rightarrow \mathbb{R}^{N}$. The right hand side in \eqref{e621} is uniformly bounded, now as in \cite{BLG} using estimates in \eqref{e615} with an application of Arzela-Ascoli's theorem and a diagonal argument we get $v_k\rightarrow v$ locally uniformly in $\mathbb{R}^N$ up to a subsequence. Thanks to \cite[Corolary 4.7]{LLCS}, the limiting function $v$ is indeed a nontrivial positive viscosity solution to the problem $(-\Delta)^s_p v= v^p$ in $\mathbb{R}^N$. Then by Theorem \ref{t62},  we get $v\in C^{\alpha}(\mathbb{R}^N)$ for some $\alpha \in (0,1)$ which contradicts the nonexistence hypothesis $(\mathcal{NA}).$ 
\\
Next we shall consider the case $d(x_{k})\mu^{-1}_{k}\rightarrow \Tilde{d}\ge 0$ which implies $x_k\rightarrow x_0$ for some $x_0\in \pa\Omega.$ Without loss of generality, we assume outward normal at $x_0,$ i.e $\nu (x_{0}) =-e^{N}$. Define
	\begin{equation*}
		w_{k}(y)=\frac{u_{k}(\zeta_{k}+\mu_{k}y)}{M_{k}}\quad y\in D^{k},
	\end{equation*} 
	where $\zeta_{k}\in \partial\Omega$ is the projection of $x_{k}$ on $\partial\Omega$ and 
	\begin{equation}
		D^{k}:=\{y\in \mathbb{R}^{N}:\zeta_{k}+\mu_{k}y\in \Omega\}.\nonumber
	\end{equation}
	Observe that 
	\begin{equation}\label{e623}
		0\in \partial D^{k},
	\end{equation}
	and
	\begin{equation*}
		D^{k}\rightarrow\mathbb{R}^{N}_{+}=\{y\in \mathbb{R}^N:y_{N}>0\}\thickspace \text{as}\medspace k\rightarrow +\infty.
	\end{equation*}
	It is easy to show that $w_{k}$ satisfies \eqref{e621} in $D^{k}$ with a different function $h_{k}$, but with the same bounds. Setting
	\begin{equation*}
		y_{k}:=\frac{x_{k}-\zeta_{k}}{\mu_{k}},
	\end{equation*}
	so that $|y_{k}|=d(x_{k})\mu^{-1}_{k}$, we observe that $w_{k}(y_{k})=1$. Since $|y_k|\rightarrow \Tilde{d},$ by passing to further  subsequence $y_{k}\rightarrow y_{0}$; $|y_0|=\Tilde{d}\geq 0$. Next we claim the following:  \\
 \underline{Claim:} 
 $y_{0}$ is in the interior of half-space $\mathbb{R}^{N}_{+}$. For this it is sufficient to show that 
	\begin{equation}\label{e6000}
		\Tilde{d}=\lim_{k\rightarrow \infty}d(x_{k})\mu^{-1}_{k}>0.
	\end{equation}
Now our estimate, namely Lemma \ref{l66}, plays a crucial role in establishing this claim. We observe that by \eqref{e621}, and since $r<q$, we get 
	\begin{equation*}
		(-\Delta)_p^s w_{k}	\leq C\leq C_{1}d^{sp^{2}-2sp-\theta(p-1)}_{k}\thickspace \text{in}\medspace D^{k},
	\end{equation*}
  where $d_{k}(y)=dist(y,\partial D^{k})$ and for a fixed $\theta\in\left(  sp-\beta_0,sp\right)$ as given in Lemma \ref{l66}.   If $d_{k}(y)<\delta$, by \eqref{e618}  we have $w_{k}(y)\leq C_{0}d_{k}(y)^{sp-\theta}$ for some $C_0>0.$ Now from \eqref{e623} and using the definition of $d_k(y_k)$ clearly $|y_{k}|\geq d_{k}(y_{k}).$ Combining all these we get the following if $d_k(y_k)<\delta$ $$1=w_k(y_k) \leq C_{0}d_{k}(y_k)^{sp-\theta} \leq  C_{0}|y_{k}|^{sp-\theta}.$$ This asserts that $|y_{k}|$ is bounded below by a positive constant, thus the claim follows. \\
Next, similar to \cite[Page no 208]{BLG} we can use the estimates in \eqref{e615} with an application of Arzela-Ascoli's theorem and a diagonal argument to conclude $w_{k}\rightarrow w$ uniformly on a compact set of $\mathbb{R}^{N}_{+}$, with $0\leq w\leq 1.$ Since $y_k\rightarrow y_0,$ for some $y_0$ is in the interior of $\mathbb{R}^N_+,$ the uniform convergence of $w_k$ in the compact subsets of $\mathbb{R}^N_+$ implies that $w(y_{0})=1.$ and $w(y)\leq Cy^{sp-\theta}_{N}$ for $y_{N}<\delta.$ Thus $w\in C(\mathbb{R}^{N}_+)$ is non-negative, bounded solution of 
	\begin{eqnarray*}  \left\{
		\begin{array}{rl}
			(-\Delta)_{p}^s w &=w^{q}\quad \text{ in }\mathbb{R}^{N}_{+}, \\
			w &=0  \hspace{.7cm} \text{ in }\mathbb{R}^{N}\setminus \mathbb{R}^{N}_{+}. 
		\end{array} 
		\right. 
	\end{eqnarray*} 
	Again using Theorem \ref{t62} we get $w\in C^{\alpha}(\mathbb{R}^N)$ for some $\alpha \in (0,1)$. Since $w(y_0)=1$, the strong maximum principle implies $w>0$. 
	So we conclude our theorem by contradiction to the nonexistence assumption $(\mathcal{NA})$.
\end{proof}
\begin{remark}
     Results discussed in section \ref{linf} and section \ref{linf1} do not use the convexity of the domain $\Omega.$ 
\end{remark}
\section{Nonlocal superlinear semipositone problem with subcritical growth}\label{deg}
 \noi In this section  we look for a positive solution to following the Dirichlet boundary value problem 
 \begin{eqnarray}\label{p61} (P^\mu)  \left\{
 	\begin{array}{rl}
 		(-\Delta)_p^s u &=\mu( u^{r}-1) \text{ in }\Omega, \\
 		u &>0  \hspace{1.40cm} \text{ in }\Omega,\\
 		u &=0 \hspace{1.40cm} \text{ on }\Omega^{c},
 	\end{array} 
 	\right. 
 \end{eqnarray}
 where $p-1<r< p^{*}_{s}-1$.  Using the substitution $w=\gamma u$ where $\gamma^{r+1-p}=\mu$, we see that \eqref{p61} is equivalent to the nonlocal problem
\begin{eqnarray}\label{p64}  \left\{
	\begin{array}{rl}
		(-\Delta)_p^s w &= w^{r}-\gamma^r ~~ \text{ in }\Omega, \\
		w &>0  \hspace{1.40cm} \text{ in }\Omega,\\
		w &=0 \hspace{1.40cm} \text{ on }\Omega^{c}.
	\end{array} 
	\right. 
\end{eqnarray}
We use this observation to study the equivalent problem (\ref{p64}).
\begin{definition}
	We define the operator $K: C(\overline{\Om})\rightarrow C(\overline{\Omega})\cap W_0^{s,p}(\Om)$ as $K(f)=u$ where $u$ is the unique weak solution of $\fpl u= f$ in $\Omega$ and $u=0$ on $\Om^c.$
\end{definition} 
The weak solution $u$ can be obtained as the minimizer of the associated functional in $W^{s,p}_{0}(\Omega)$. Using \cite[Theorem 1.1]{t24}, $u\in C^\al(\overline{\Omega})$ for some  $\al \in (0,s]$ and thus the map $K$ is well defined. From Theorem \cite[Theorem 1.3]{Barrios20}, we also infer that the weak solution $u$ is in fact a viscosity solution. Now, if we set
\begin{equation}\label{e624}
	F(\mu,u)=\mu(|u|^r-1)
\end{equation}
then finding a weak solution to the nonlinear problem 
\begin{eqnarray}\label{p65}  \left\{
	\begin{array}{rl}
		(-\Delta)_p^s u &=  F(\mu,u)~  \text{ in }\Omega, \\
		u &>0  \hspace{1.20cm} \text{ in }\Omega,\\
		u &=0 \hspace{1.20cm} \text{ on }\Omega^{c}.
	\end{array} 
	\right. 
\end{eqnarray}
is equivalent to finding a fixed point for the map $K( F(\mu, u)).$ By the regularity results for subcritical problem (see \cite{IASLKM}), the weak solution of (\ref{p65}) is infact a viscosity solution. Using the rescaling argument once again, we see that  $u=K( F(\mu,u))$ iff $w= K( \tilde{F}(\gamma,w))$ where $\tilde{F}(\ga, w)= |w|^{r}-\gamma^r$ i.e. 
\begin{eqnarray}\label{p66}  \left\{
	\begin{array}{rl}
		(-\Delta)_p^s w &=   |w|^r-\gamma^r  \text{ in }\Omega, \\
		w &=0 \hspace{1.20cm} \text{ on }\Omega^{c}.
	\end{array} 
	\right. 
\end{eqnarray}
 We shall denote 
\begin{equation}\label{e625}
	S(\gamma,w)=w-K( \tilde{F}(\gamma,w) )\mbox{ for } 0\leq \gamma<\infty.
\end{equation}
For $\gamma=0, $ the map $S(0,w)$ is denoted as $S_0(w).$ The solutions of $S_0(w)=0$ are nothing but the solutions of 

\begin{eqnarray}\label{p67}  \left\{
	\begin{array}{rl}
		(-\Delta)_p^s w &=   |w|^r  \text{ in }\Omega, \\
		w &=0 \hspace{.40cm} \text{ on }\Omega^{c}.
	\end{array} 
	\right. 
\end{eqnarray}
\\
\noi Next we state \cite[Proposition 2.1 and Remark 2.1]{DDPLNR} here  which will be used in the proof of Lemma \ref{l67}.
\begin{proposition}\label{pr61}
	Let $C$ be a cone in a Banach space $X$ and $\Phi: C\rightarrow C$ be a compact map such that $\Phi(0)=0.$ Assume there exists $0<R_1<R_2$ such that 
	\begin{itemize}
		\item [(i)] $x\neq t \Phi(x)$ for all $t\in [0,1]$ and $\|x\|_{X}=R_1$.
		\item [(ii)] There exists a map $T: \overline{B}_{R_2}\times [0,\infty) \rightarrow C$ such that $T(x,0)=\Phi(x)$ for all $\|x\|_X=R_2, T(x,t)\neq x$ for $\|x\|_{X}=R_2$ and $0\leq t<\infty$ and $T(x,t)=x$ has no solution for $x\in \overline{B}_{R_2},\, t\geq t_0.$
	\end{itemize}
	If we denote $U=\{x\in C: R_1< \|x\|_{X}<R_2\}$ and $B_\rho=\{x\in C: \|x\|_{X}<\rho\}$, then 
	$deg(I-\Phi,B_{R_2},0)=0, deg(I-\Phi,B_{R_1},0)=1$ and $deg(I-\Phi,U,0)=-1.$
\end{proposition}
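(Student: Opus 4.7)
The plan is to verify the three identities by invoking the standard properties of the fixed-point index (Leray--Schauder degree) on the cone $C$: normalization, homotopy invariance, the solution property, and additivity. Each of the three conclusions corresponds to one of conditions (i), (ii), and an additivity step combining them, so I would handle them in that order.

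First I would establish $\deg(I-\Phi, B_{R_1}, 0)=1$. Consider the affine homotopy
\begin{equation*}
H(t,x) = t\,\Phi(x), \qquad (t,x)\in [0,1]\times \overline{B}_{R_1}.
\end{equation*}
Since $\Phi$ is compact, so is $H$, and condition (i) is precisely the admissibility statement $x - H(t,x)\neq 0$ for $\|x\|_X = R_1$ and $t\in[0,1]$. Homotopy invariance then gives
\begin{equation*}
\deg(I-\Phi, B_{R_1}, 0) = \deg(I-H(0,\cdot), B_{R_1}, 0) = \deg(I, B_{R_1}, 0) = 1,
\end{equation*}
the last equality being the normalization property together with $0\in B_{R_1}$ (recall $\Phi(0)=0$, so the cone-valuedness is consistent).

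Next I would show $\deg(I-\Phi, B_{R_2}, 0)=0$ using the second homotopy $T$. The hypothesis in (ii) that $T(x,t)\neq x$ for all $\|x\|_X=R_2$ and all $t\in[0,\infty)$ makes $T(\cdot,t)$ an admissible compact homotopy on $\overline{B}_{R_2}$ for $t\in[0,t_0]$. By homotopy invariance,
\begin{equation*}
\deg(I-\Phi, B_{R_2}, 0) = \deg(I-T(\cdot,0), B_{R_2}, 0) = \deg(I-T(\cdot,t_0), B_{R_2}, 0).
\end{equation*}
The last hypothesis in (ii) says that $T(x,t_0)=x$ has no solution in $\overline{B}_{R_2}$, so the map $I-T(\cdot,t_0)$ has no zero in $B_{R_2}$; the solution property then yields $\deg(I-T(\cdot,t_0), B_{R_2}, 0)=0$, which is what I want.

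Finally I would deduce $\deg(I-\Phi, U, 0)=-1$ by additivity. Condition (i) at $t=1$ ensures $\Phi$ has no fixed point on $\partial B_{R_1}=\{x\in C:\|x\|_X=R_1\}$, so the degree is well defined on each of the disjoint open sets $B_{R_1}$ and $U$, whose union differs from $B_{R_2}$ only by $\partial B_{R_1}$, a set containing no zeros of $I-\Phi$. Additivity then gives
\begin{equation*}
\deg(I-\Phi, B_{R_2}, 0) = \deg(I-\Phi, B_{R_1}, 0) + \deg(I-\Phi, U, 0),
\end{equation*}
and substituting the previously computed values $0 = 1 + \deg(I-\Phi, U, 0)$ yields $\deg(I-\Phi, U, 0)=-1$. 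The only subtlety I anticipate is the technical verification that the map $T$ supplied by (ii) is jointly continuous and compact as a map of $(t,x)$, so that homotopy invariance of the cone-index applies in the required form; this is implicit in the framework of \cite{DDPLNR} and can be taken as part of the setup of the proposition.
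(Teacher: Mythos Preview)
The paper does not actually prove this proposition; it merely quotes it from \cite[Proposition~2.1 and Remark~2.1]{DDPLNR} and then applies it in the proof of Lemma~\ref{l67}. So there is no ``paper's own proof'' to compare against.

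That said, your argument is correct and is exactly the standard proof one finds for this type of result: the linear homotopy $t\Phi$ together with condition~(i) gives degree~$1$ on $B_{R_1}$ by normalization; the homotopy $T(\cdot,t)$ together with condition~(ii) gives degree~$0$ on $B_{R_2}$ by the solution property; and additivity over the decomposition $B_{R_2}=B_{R_1}\cup U\cup\partial B_{R_1}$ yields degree~$-1$ on the annulus. Your caveat about joint continuity and compactness of $T$ is appropriate and is indeed part of the implicit hypotheses in the cited framework.
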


\begin{lemma}\label{l67}
	There exists $0<R_1<R_2$ such that $S_0(w)\neq 0$ for all $w$ belonging to the set $\{w: \|w\|_{\infty}=R_1 \; \text{or} \;R_2\}$  and
	$deg(S_0, B_{R_2}\setminus\overline{B}_{R_1},0)=-1.$
\end{lemma}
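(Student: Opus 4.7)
The plan is to apply Proposition \ref{pr61} to the compact operator $\Phi(w) := K(|w|^r)$ acting on the positive cone $C := \{w \in C(\overline{\Omega}) : w \geq 0\}$. Positivity of $\Phi$ is preserved by the weak comparison principle for $(-\Delta)_p^s$, compactness of $\Phi : C \to C$ follows from the H\"older regularity in \cite{t24}, and $\Phi(0)=0$. Once conditions (i) and (ii) of Proposition \ref{pr61} are verified for a suitably small $R_1$ and a suitably large $R_2$, its conclusion directly yields $\deg(S_0, B_{R_1},0) = 1$ and $\deg(S_0, B_{R_2},0) = 0$, hence $\deg(S_0, B_{R_2}\setminus \overline{B}_{R_1},0) = -1$, together with $S_0 \neq 0$ on $\partial B_{R_1}$ and $\partial B_{R_2}$.

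For condition (i), the $(p-1)$-homogeneity $K(\lambda f)= \lambda^{1/(p-1)}K(f)$ of the inverse operator converts $w = t\,\Phi(w)$ into $(-\Delta)_p^s w = t^{p-1} w^r$. Testing this equation against $w$ itself and combining with the variational characterization \eqref{nscp4} of $\lambda_1$,
\begin{equation*}
\lambda_1 \int_\Omega w^p\,dx \;\le\; [w]_{W^{s,p}(\mathbb{R}^N)}^{p} \;=\; t^{p-1}\int_\Omega w^{r+1}\,dx \;\le\; t^{p-1} R_1^{r-(p-1)} \int_\Omega w^p\,dx,
\end{equation*}
so any nontrivial $w$ with $\|w\|_\infty = R_1$ would force $\lambda_1 \le R_1^{r-(p-1)}$. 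Choosing $R_1 \in (0, \lambda_1^{1/(r-(p-1))})$ rules this out for every $t \in [0,1]$.

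For condition (ii) I employ the homotopy $T(w,t) := K(w^r + t)$, for which $T(\cdot,0)=\Phi$ and every fixed point is a positive viscosity solution of $(-\Delta)_p^s w = w^r + t$. Weak comparison against the explicit solution $v_t := t^{1/(p-1)} K(1)$ of $(-\Delta)_p^s v = t$ yields $w \ge v_t$, so $\|w\|_\infty \ge c_0 t^{1/(p-1)}$ with $c_0 := \|K(1)\|_\infty > 0$; in particular a fixed point in $\overline{B}_{R_2}$ requires $t \le t_0 := (R_2/c_0)^{p-1}$, which supplies the ``$T(x,t)=x$ has no solution for $t\ge t_0$'' clause. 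For $t \in [0,t_0]$ the Gidas--Spruck blow-up of Theorem \ref{t64} applies verbatim with $g \equiv t$: the constant perturbation rescales to $t\, M_k^{-r} \to 0$ along any blow-up sequence, so the limit equation is still $(-\Delta)_p^s v = v^r$ on $\mathbb{R}^N$ or on a half-space, contradicting $(\mathcal{NA})$ and producing a uniform bound $\|w\|_\infty \le M^*(t_0)$. A sharper reading of the same blow-up, applied to sequences $(w_k,t_k)$ with $t_k\to\infty$ and $M_k/t_k^{1/r}\to\infty$, gives $M^*(t) = O(t^{1/r})$ as $t\to\infty$; since $r > p-1$,
\begin{equation*}
\frac{M^*(t_0)}{R_2} \;\le\; \frac{C\, t_0^{1/r}}{c_0\, t_0^{1/(p-1)}} \;=\; \frac{C}{c_0}\, t_0^{1/r - 1/(p-1)} \;\longrightarrow\; 0 \quad \text{as } R_2 \to \infty.
\end{equation*}
Hence for $R_2$ large enough, $M^*(t_0) < R_2$; combined with $\|w\|_\infty > R_2$ whenever $t > t_0$, this forces $\|w\|_\infty \neq R_2$ for every $t \ge 0$.

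The main obstacle is precisely the uniform-in-$t$ character of condition (ii): the fixed-point set of $T(\cdot,t)$ can a priori sweep across a wide range of $L^\infty$ norms, and no single unconditional a priori bound is available. The resolution exploits the gap between the superlinear a priori bound $M^*(t) = O(t^{1/r})$ coming from the blow-up against $(\mathcal{NA})$, and the comparison lower bound $\|w\|_\infty \ge c_0 t^{1/(p-1)}$; this gap exists precisely because $r > p-1$ enforces $1/r < 1/(p-1)$, and it leaves a window of admissible $R_2$ on which the fixed-point set never meets the sphere $\partial B_{R_2}$.
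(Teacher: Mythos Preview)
Your argument is correct, and it takes a genuinely different route from the paper's proof. Both set up Proposition~\ref{pr61} on the positive cone with $\Phi(w)=K(|w|^r)$ and verify condition (i) in exactly the way you do. The divergence is in the homotopy chosen for condition (ii): the paper takes $T(u,t)=K\big((|u|+t)^r\big)$, whereas you take $T(w,t)=K(w^r+t)$.

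For the paper's homotopy, the clause ``no fixed point for $t\ge t_0$'' is obtained via a Picone-type inequality (testing with $\varphi_1^p/u_t^{p-1}$) combined with the monotonicity result of \cite{WWLZ}; the latter is precisely where the convexity assumption on $\Omega$ enters. The paper then proves a uniform bound $\|u\|_\infty\le M$ \emph{independent of $t$} by first showing $t_k/\|u_k\|_\infty\to0$ (through a comparison with the torsion function) and then running the Gidas--Spruck blow-up; after that, any $R_2>M$ works.

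Your homotopy trades this structure for a more elementary balance-of-scales argument: the comparison $w\ge t^{1/(p-1)}K(1)$ gives the lower bound $\|w\|_\infty\gtrsim t^{1/(p-1)}$, while the blow-up under $(\mathcal{NA})$ gives the upper bound $\|w\|_\infty\lesssim (1+t)^{1/r}$, and the superlinearity $r>p-1$ opens a gap between these two powers that lets you place $R_2$. The cost is that $t_0$ now depends on $R_2$, so you must check that the a~priori bound grows strictly slower than $R_2$---which you do. The benefit is that you avoid both the Picone identity and the monotonicity step, and in particular your proof of this lemma does not use the convexity of $\Omega$ at all.
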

\begin{proof}
Let us consider $X=C(\overline{\Om}), $ and  $C=\{u\in X: u(x)\geq 0\}$. We define a map $\Phi: C\rightarrow C$ by setting $\Phi(\cdot):= K(\tilde{F}(0,\cdot))$. It is straightforward to show that $\Phi$ is a compact map.
 Suppose that $t \Phi(u)=u$ for some $t\in [0,1]$ then,  
	\begin{eqnarray}\label{p68}  \left\{
		\begin{array}{rl}
			(-\Delta)_p^s u&=   t^{p-1}|u|^r ~~ \text{ in }\Omega, \\
			u &=0 \hspace{1.40cm} \text{ on }\Omega^{c}.
		\end{array} 
		\right. 
	\end{eqnarray}
		Using the variational characterization of the principal eigenvalue and the $L^\infty$ regularity of the weak solution of \eqref{p68} 
	$$\la_{1}^p \int_{\Om} |u|^p \leq \|u\|_{W_0^{s,p}(\Om)} =t^{p-1}\int_{\Omega} |u|^{r+1}\leq \|u\|_{L^{\infty}(\Om)}^{r+1-p}\int_{\Om} |u|^p.$$
	which imples, $\|u\|_{L^{\infty}(\Om)}\geq c(s,p,r).$ Now by choosing $R_1$ small enough we have condition $(i)$ listed in  Proposition \ref{pr61}.\\
	Next we define $T(u,t)=K (\tilde{F}(0,(|u|+t))). $ Then $T(u,0)=\Phi(u)$ and we will verify two more conditions of the map $T$, viz.,
	\begin{itemize}
		\item [(a)] $T(u,t)\neq u$ for all $\|u\|_{L^{\infty}(\Om)}=R_2$ and $0\leq t<\infty.$
		\item[(b)]  $T(u,t)=u$ has no solution for $u\in \overline{B}_{R_2}$ and $t\geq t_0.$
	\end{itemize}
	To verify (b), we infact claim that $T(u,t)=u$ has no solution if $t\geq t_0.$ Suppose that for any arbitrary $t$ there exists a solution $u_t\in C(\overline{\Om})$ of $T(u_t,t)=u_t.$ Taking $\frac{\va_1^p}{u_t^{p-1}}$ as the test function and using \cite[Proposition 4.2]{CPBF} we have
 \begin{equation}\label{e627}
     \begin{aligned}
    \int_{\Om}&\frac{(u_t+t)^r\va_1^p}{u_t^{p-1}}
		\\&= \int_{\mathbb R^N\times\mathbb R^N} \frac{|u_t(x)-u_t(y)|^{p-2}(u_t(x)-u_t(y))}{|x-y|^{N+sp}} \left(\frac{\varphi_1(x)^{p}}{u_t(x)^{p-1}} - \frac{\va_1(y)^{p}}{u_t(y)^{p-1}}\right)\nonumber\\
		&\hspace{4.4cm}\leq \int_{\mathbb R^N\times\mathbb R^N} \frac{|\va_1(x)-\va_1(y)|^{p}}{|x-y|^{N+sp}}\nonumber=\la_1 \int_\Om|\varphi_1|^{p}.
     \end{aligned}
 \end{equation}
where $\va_1$ is the first eigenfunction of $(-\Delta)_p^s.$ Using the strict convexity of the domain and the monotonicity of the solutions $u_t$ as 
given in \cite[Theorem 1.1]{WWLZ} we appeal to \cite[eqn. 8]{DDPLNR} and obtain for any $x\in \Omega_\epsilon$
	\begin{align*}
		\gamma(\inf_{\Omega\setminus \Omega_{\frac{\epsilon}{2}}}\va_1^p)u_t(x)^{r-p+1}&\leq \int_{I_x}u_t^{r-p+1}(\xi)\va_1^p(\xi) d\xi \leq\int_\Omega u_t^{r-p+1}(\xi)\va_1^p(\xi)dx\\
		&\hspace{3cm}\leq	\int_{\Om}\frac{(u_t+t)^r\va_1^p}{u_t^{p-1}} \; \leq \;\la_1 \int_\Om|\varphi_1|^{p}.
	\end{align*}
	where $I_x$ is a measurable set as defined in \cite{DDPLNR}.
	Thus we get that $\|u_t\|_{L^{\infty}(\Omega_\epsilon)}\leq C_\epsilon$ for all $t.$ Now for $t$ large enough, using \eqref{e627} we have the following estimate
	\begin{equation}\label{e628}
		t\int_{\Om_\epsilon}\frac{\va_1^p}{C_\epsilon^{p-1}}\leq t\int_{\Om_\epsilon}\frac{\va_1^p}{u_t^{p-1}}
		\leq t\int_{\Om}\frac{\va_1^p}{u_t^{p-1}}
		\leq \la_1 \int_\Om|\va_1|^{p}.
	\end{equation}
	Thus, from \eqref{e628}, we infer that $T(u,t)=u$ has no solution for $t\geq t_0.$ This proves (b).\\[4mm]
	We will next show that  if $u$ solves $T(u,t)=u$ for $t\in[0,\infty),$ then $\|u\|_{{\infty}}\leq M$ (independent of $t$) and this verifies condition $(a)$. We proceed as in \cite{JSBG}.
	On the contrary let us assume that there exists $t_{k}\in [0,\infty)$ such that for the corresponding solutions $u_{k}$, $\|u_{k}\|_{L^{\infty}(\Omega)}\rightarrow \infty$. Denote $M_{k}=\|u_{k}\|_{L^{\infty}(\Omega)}\rightarrow \infty$ and let $x_{k}\in \Om$ be points with $M_{k}=u_{k}(x_{k})$. First we claim that, up to a subsequence,
	\begin{equation*}
		\frac{t_k}{\|u_{k}\|_{L^{\infty}(\Omega)}}\rightarrow 0\text{ as }k \rightarrow\infty.
	\end{equation*}
	Indeed, without loss of generality, we assume that $t_k>0$ for all $k$ and $t_k\rightarrow\infty$. Define $w_k:=\frac{u_k}{t_k}$ and $\la_k:=t_k^{r-p+1}$. Then, it is easy to check that $(w_k, \lambda_k)$ satisfies weakly 
	\begin{equation*}
	    \begin{aligned}
	      (-\Delta)_p^s w_k = \lambda_k   (w_k+1)^r
	    \end{aligned}
	\end{equation*}
	Then from the comparison principle, we have $w_k\geq \bar w_k$ where 
	\begin{equation}\label{e629}
	\begin{aligned}
	  (-\Delta)_p^s \bar w_k = \lambda_k   \mbox{ in } \Omega\;\;\; \mbox{ and } \;\;\;\; \bar w_k=0 \mbox{ in } \; \Omega^c
	\end{aligned}
	\end{equation}
	Suppose $\sup_k\|\bar w_{k}\|_{L^{\infty}(\Omega)}$ is bounded by $C$, then using the weak formulation of  \eqref{e629}, we get
	\begin{equation}\label{e630}
		\|\bar w_k\|_{W_0^{s,p}(\Om)}^p\leq C\la_k|\Omega|.
	\end{equation}
Let $\phi\geq 0$ be a nontrivial function in $C_c^\infty(\Omega),$ then  using H\"older's inequality we get 
	\begin{align}\label{e631}
	\displaystyle	0<\int_{\Om}\phi&\leq\frac{1}{\la_k}\int_{\mathbb R^N\times\mathbb R^N} \frac{|\bar w_k(x)-\bar w_k(y)|^{p-2}(\bar w_k(x)-\bar w_k(y))(\phi(x) - \phi(y))}{|x-y|^{N+sp}}\nonumber\\
	\displaystyle\leq	& \, {\la_k^{-1}}\| \bar w_k\|_{W^{s,p}_{0}(\Om)}^{p-1}\|\phi\|_{W^{s,p}_{0}(\Om)} \; \leq {C_1 \la_k^{-1/p}}\rightarrow 0,
	\end{align}	
	which is a contradiction. Thus $\sup_k\|\bar w_{k}\|_{L^{\infty}(\Omega)}$ must be unbounded. Therefore we have 
	\[\displaystyle
	\sup_k\| w_{k}\|_{L^{\infty}(\Omega)}\geq \sup_k\|\bar w_{k}\|_{L^{\infty}(\Omega)}=\infty.\]
 This verifies the claim since $\|w_k\|_\infty=\frac{\|u_k\|_\infty}{t_k}.$
	 Now we introduce the Gidas-Spruck translated function 
	\begin{equation*}
		v_{k}(y)=\frac{u_{k}(x_{k}+\mu_{k}y)}{M_{k}},
		\quad y\in \Omega^{k}
	\end{equation*}
	where
	\begin{equation*} 
		\Omega^{k}=\left\lbrace y\in \mathbb{R}^{N}:x_{k}+\mu_{k}y\in \Omega\right\rbrace 
		\text{ and } \mu_{k}=M^{-\frac{r-(p-1)}{sp}}_{k}.
	\end{equation*}
	A straightforward calculation yields
	\begin{equation}\label{e632}
		(-\Delta)^s_p v_{k}(x)= \left(v_{k}(x)+\frac{t_{k}}{M_{k}}\right)^r,
	\end{equation}
 Since $v_k$'s are also viscosity solutions of (\ref{e632}) and as $\thickspace \frac{t_{k}}{M_{k}}\rightarrow 0,$	using the arguments as in the proof of Theorem \ref{t64}, we can pass through the limit to get a contradiction to the nonexistence results for the sub-critical problem \eqref{p62}.  Hence, if $T(u,t)=u$ then $\|u\|_{L^{\infty}(\Omega)}$ is bounded independent of $t.$ Finally, we use Proposition \ref{pr61} with the proper choice of $R_1$ and $R_2$ and conclude that $deg(S_0, B_{R_2}\setminus\overline{B}_{R_1},0)=-1.$
\end{proof}
\noi Next we prove the main result of our paper for which we proceed as in \cite{MCPDRS}. We determine the degree of $S(\gamma,.)$ by connecting $S(\gamma,.)$ and $S(0,.)$ using the homotopy invariance of degree with respect to $\gamma$. Since $deg(S_0, B_{R_2}\setminus\overline{B}_{R_1},0)\neq 0,$ in particular, this will imply that $S(\gamma,w)$ has a solution $w$ satisfying $ R_{1}<\|w\|_{L^{\infty}(\Om)}<R_{2}$. Finally, we will show that the solution hence obtained for \eqref{p66} is infact positive for $\gamma$ small enough.
\begin{theorem}\label{t65}
For $p\ge 2$, the problem \eqref{p66} admits a positive solution for $\gamma\in[0,\gamma_0].$ 
\end{theorem}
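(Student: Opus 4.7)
The plan is to extend the degree computation of Lemma~\ref{l67} from $\gamma = 0$ to a small interval $[0,\gamma_0]$ via the homotopy invariance of Leray--Schauder degree, thereby producing a (a priori sign-changing) weak solution of \eqref{p66} in the annulus $A := \{R_1 < \|w\|_\infty < R_2\}$, and then to upgrade it to a strictly positive solution by a compactness-plus-strong-maximum-principle argument for $\gamma$ small. The compact homotopy is $(\gamma,w) \mapsto S(\gamma,w) = w - K(\tilde{F}(\gamma,w))$ on $C(\overline{\Omega})$; compactness and joint continuity follow from the $C^\alpha(\overline{\Omega})$ regularity of $K$ (see \cite[Theorem 1.1]{t24}) together with the compact embedding $C^\alpha \hookrightarrow C^0$. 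For invariance, one must exclude zeros of $S(\gamma,\cdot)$ on $\partial A$ for every $\gamma \in [0,\gamma_0]$.

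For the outer face $\{\|w\|_\infty = R_2\}$, I would adapt the blow-up proof of Theorem~\ref{t64} to the equation $(-\Delta)_p^s w = |w|^r - \gamma^r$. Under the Gidas--Spruck rescaling $v_k(y) = u_k(x_k + \mu_k y)/M_k$ with $\mu_k = M_k^{-(r-p+1)/(sp)}$, the equation becomes $(-\Delta)_p^s v_k = v_k^r - \gamma^r M_k^{-r}$, and the vanishing constant perturbation does not affect the blow-up limit, so $(\mathcal{NA})$ still yields a uniform bound $\|w\|_\infty \leq C$ for $\gamma \in [0,1]$; pick $R_2 > C$. For the inner face $\{\|w\|_\infty = R_1\}$, I argue by contradiction: a sequence $(\gamma_k, w_k)$ with $\gamma_k \to 0$, $S(\gamma_k, w_k) = 0$, $\|w_k\|_\infty = R_1$ would, via Theorem~\ref{t62} and \cite[Corollary 4.7]{LLCS}, produce a uniform limit $w_* \in C(\overline{\Omega})$ solving $S_0(w_*) = 0$ with $\|w_*\|_\infty = R_1$, contradicting the lower bound $\|u\|_\infty \geq c(s,p,r)$ for nontrivial zeros of $S_0$ established inside the proof of Lemma~\ref{l67} (provided $R_1$ has been chosen strictly below this threshold). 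Hence for some $\gamma_0 > 0$ the homotopy is admissible, and invariance gives $\deg(S(\gamma,\cdot), A, 0) = \deg(S_0, A, 0) = -1$ for all $\gamma \in [0,\gamma_0]$, producing a zero $w_\gamma \in A$ of $S(\gamma,\cdot)$.

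To establish $w_\gamma > 0$ in $\Omega$ for $\gamma$ small, observe that along any sequence $\gamma_k \to 0$, the solutions $w_{\gamma_k}$ lie in a bounded set of the weighted H\"older space $C^{0,\alpha}_d(\overline{\Omega})$, so by the compact embedding $C^{0,\alpha}_d \hookrightarrow C^0_d$ they converge (up to subsequence) in $C^0_d(\overline{\Omega})$ to some $w_*$ satisfying $(-\Delta)_p^s w_* = |w_*|^r$ with $R_1 \leq \|w_*\|_\infty \leq R_2$. Since the right-hand side is nonnegative, the weak comparison principle forces $w_* \geq 0$, and then the strong maximum principle together with the Hopf-type lemma from \cite{LZ21} yield $w_*/d^s \geq c_0 > 0$ on $\overline{\Omega}$. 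The $C^0_d$-convergence then gives $w_{\gamma_k}/d^s \geq c_0/2$ for $k$ large, hence $w_{\gamma_k} > 0$ in $\Omega$; shrinking $\gamma_0$ completes the proof. The principal obstacle is this final step: uniform weighted boundary regularity $w_\gamma \in C^{0,\alpha}_d(\overline{\Omega})$ is required so that mere $C^0$-convergence strengthens to the weighted convergence needed to transport the strict Hopf-type positivity of $w_*$ down to the prelimit $w_{\gamma_k}$; any weakening of the boundary regularity theory for the fractional $p$-Laplacian would jeopardize this positivity conclusion.
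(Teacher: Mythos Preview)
Your proposal is correct and follows the same two–step architecture as the paper: homotopy invariance of degree to carry $\deg(S_0,A,0)=-1$ over to $\deg(S(\gamma,\cdot),A,0)=-1$ for small $\gamma$, and then weighted H\"older compactness in $C^{0,\alpha}_d(\overline{\Omega})$ together with the Hopf--type lemma to upgrade the resulting zero $w_\gamma$ to a strictly positive solution. Step~II in your write-up matches the paper's Step~II essentially line by line (you even fill in explicitly the intermediate observation $w_*\geq 0$, which the paper leaves implicit).

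The one place where you diverge is the treatment of the outer face $\{\|w\|_\infty=R_2\}$ in Step~I. The paper handles \emph{both} faces by a single compactness/contradiction argument: if zeros of $S(\gamma_n,\cdot)$ accumulated on $\{\|w\|_\infty=R_1\}\cup\{\|w\|_\infty=R_2\}$ as $\gamma_n\to 0$, compactness of $K\circ\tilde F(\gamma,\cdot)$ would produce a zero of $S_0$ on that set, contradicting Lemma~\ref{l67}. You instead redo a Gidas--Spruck blow-up for the family $(-\Delta)_p^s w=|w|^r-\gamma^r$, $\gamma\in[0,1]$, to manufacture a $\gamma$-uniform $L^\infty$ bound and then choose $R_2$ above it. This works, but two remarks are in order. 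First, it duplicates effort: the proof of Lemma~\ref{l67} already runs exactly such a blow-up (for the map $T(u,t)=u$) to select $R_2$, so the paper's compactness argument simply reuses that choice. Second, since the solutions of \eqref{p66} may change sign, your blow-up needs a small additional ingredient you did not spell out: a uniform lower barrier (e.g.\ comparison with $-z$ where $(-\Delta)_p^s z=1$ in $\Omega$, $z=0$ in $\Omega^c$) gives $w_k^-\leq C$, so after rescaling $v_k^-\to 0$ and the blow-up limit is nonnegative, placing you back in the scope of the hypothesis $(\mathcal{NA})$. With that caveat, both routes are valid; the paper's is the shorter one.
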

\begin{proof}
	We prove the theorem in two steps:\\
	\textbf{STEP I: } There exists a $\gamma_0>0$ such that $deg(S(\gamma,\cdot), B_{R_2}\setminus \overline{B}_{R_1},0)=-1$ for all $\gamma \in [0,\gamma_{0}]$.
	
	If  $S(\gamma,w)\neq 0$ for $\|w\|_\infty \in \{R_1, R_2\},$ then using Lemma \ref{l67} and the homotopy invariance of degree we know that  $deg(S(\gamma,\cdot), B_{R_2}\setminus\overline{B}_{R_1},0)=-1.$ Suppose that $S(\gamma_n,w_n)=0$ for some sequence  $\gamma_n \rightarrow 0$ and $\|w_n\|\in \{R_1,R_2\}.$ Since $K \tilde{F}(\gamma, \cdot) : C(\overline{\Om})\rightarrow  C(\overline{\Om}) $ is compact, we can find a function $w_0\in  C(\overline{\Om})$ with $\|w_0\|\in \{R_1,R_2\}$ and $S(0,w_0)=0$. This contradicts the previous lemma \ref{l67} and hence Step I is proved. \\
	Clearly, Step I implies that the set of solutions of $S(\gamma,w)=0$ is nonempty for $\gamma$ small enough. If $w$ is a positive function solving the equation $S(\gamma,w)=0,$ then $w$ solves the PDE \eqref{p66}. With this observation, the proof of Theorem \ref{t65} is complete if we prove Step II  given below. \\
	\textbf{ STEP II: } For $\gamma$ small enough, $S(\gamma, w)=0$ for some $w$ in $B_{R_2}\setminus \overline{B}_{R_1}$ implies $w>0.$\\
	  Let $S(\gamma_n, w_n)=0$ for $\gamma_n \rightarrow 0$ and $w_n\in B_{R_2}\setminus \overline{B}_{R_1}$. Since $\|w_n\|_{C(\overline{\Om})}$ is bounded, using \cite[Theorem 1.1 ]{t24} and \cite[Theorem 1.1 ]{IMSS1}, we have 
	$$\|w_n\|_{C^\alpha(\overline{\Om})}\leq C \mbox{ and }\left \|\frac{w_n}{d^s(x)}\right\|_{C^\alpha(\overline{\Omega})}\leq C.$$
	Thus, by Ascoli Arzela theorem, up to a subsequence 
	$$  w_n \rightarrow w_0 \mbox{ in } C^0(\overline{\Omega}) \;\; \mbox{  and    }\;\; \frac{w_n}{d^s(x)}  \rightarrow \frac{w_0}{d^s(x)}  \mbox{ in } C^0(\overline{\Omega}) .$$ 
  We also know that $w_0$ is a non-zero solution of $S(w_0,0)=0.$ By regularity results $w_0\in C_{d^s}^0(\overline{\Omega})$ and Hopf type lemma \cite{HDLQ}, $\inf_{x\in \Om} \frac{w_0(x)}{d^s(x)}>0.$ 
	Finally, the positivity of $w_n$ follows by using the above uniform convergence and the fact that $\displaystyle\frac{w_0(x)}{d^s(x)} >0$ in $\overline{\Om}.$ Hence we conclude that \eqref{p66} admits a positive solution for $\gamma\in[0,\gamma_0].$ 
	\end{proof}

{\noi \textbf{Proof of the Theorem \ref{t61}:} As per the discussion presented at the beginning of section \ref{deg}, we can infer that the existence of positive solutions for $(P^{\mu})$ is equivalent to the existence of positive solutions for \eqref{p66}. Since theorem \ref{t65} has been established, it follows that problem $(P^{\mu})$ has a positive solution for all $\mu\in (0,\mu_0)$.}

\begin{remark}
 	We remark that with slight modification, the above theorem can be proved for more general Dirichlet problems like $\fpl u=\la f(u)$ in $\Omega$ and $u=0$ in $\RR^N\setminus \Omega$ where $f: [0,\infty)\rightarrow \RR$ is such that $f(0)<0,$ $f(s)\geq 0$ for $s>>1$ and an additional growth assumption $\lim_{s\rightarrow \infty} \frac{f(s)}{s^{q-1}}=b$ for some $b>0$ and $q\in (p-1,\pst-1).$ The details are exactly as discussed in \cite{MCPDRS}.
\end{remark}
\begin{remark}
Notably, in the local case of the $p$-Laplacian when $p\geq 2$, Theorem \ref{t61} represents a unique contribution as it employs degree theory, which has not been utilized previously to establish similar results. Prior research by \cite{MCPDRS} could only examine the case where $p\in (1,2]$ due to the absence of a uniform $L^\infty$ bound. However, by employing the ideas discussed previously for the local scenario, we can now expand the results of \cite{MCPDRS} to encompass the case of p Laplacian when $p>2.$
\end{remark}
 \section{ Semipositone Fractional p-Laplace Problem with Critical Growth }
Semipositone problems with critical exponents have been extensively studied in the literature. It is well-established that, by using the Pohozaev identity for the p-Laplacian, one can prove the non-existence of positive solutions for the equation $-\Delta_p u = \mu(u^{p^*-1}-1)$ subject to zero Dirichlet boundary conditions in any star-shaped domain $\Omega$. For the fractional Laplace equation, where the Pohozaev identity is available (as shown in \cite{RS12}), similar non-existence results can be proven for the semipositone critical exponent problem. However, for the fractional p Laplacian, a Pohozaev identity in a bounded domain has not been established, which makes it difficult to confirm this fact completely. Nonetheless, recent results in \cite{amb23} have made progress towards understanding the Pohozaev identity for the fractional p Laplacian in $\mathbb{R}^N$.

Despite this uncertainty, we can still draw motivation from the work of \cite{PSS} and consider a perturbed multiparameter problem in the spirit of the Brezis-Nirenberg problem as described in $(\ref{nscpt1})$. In \cite{PSS}, for the local case, a positive solution to this perturbed problem was shown to exist. In this section, we extend this study to a similar problem involving the fractional p Laplacian operator.

 
 
The main objective of this section is to establish the existence of a positive solution to the $p$-superlinear semipositone problem with critical growth involving the fractional $p$-Laplace operator, as presented in equation ($P_\la^\mu$) for the conditions $p\ge2$, $N\ge sp^{2}$, and $\lambda\in (0,\lambda_{1})$. Therefore, from now on in this section, we will assume that these conditions hold.  To prove Theorem \ref{nscpt1}, we follow a similar approach to that of \cite{PSS} by utilizing variational tools as the background framework for the mountain pass lemma. However, a significant challenge we face in this study is demonstrating the positivity of the solution. This issue is resolved through the implementation of Hopf's Lemma, which is discussed in \cite{IASLKM}, specifically when $p\geq 2$.
 We now  consider the modified problem for $\lambda,\mu>0$
\begin{eqnarray}\label{nscp6}  \left.
	\begin{array}{rl}
		(-\Delta)_{p}^s u &=\lambda u^{p-1}_{+}+u_{+}^{p^{*}_{s}-1}-\mu f(u) \text{ in }\Omega, \\	
		u &=0 \hspace{3.85cm} \text{ on }\Omega^{c},
	\end{array} 
	\right\}
\end{eqnarray} 
where $u_{+}(x)=\max\{ u(x),0\}$ and
\[ f(t)=
\begin{cases}
	1 &\quad t\ge 0,\\
	1-|t|^{p-1} &\quad -1<t<0, \\
	0 &\quad t\le -1.
\end{cases}
\]
We are interested in finding the critical points of the $C^{1}$-functional 
\begin{equation*}
	\begin{split}
		E_{\mu}(u)=	 &\frac{1}{p} \int_{\mathbb{R}^N \times \mathbb{R}^N} \dfrac{|u(x)-u(y)|^{p}}{|x-y|^{N+sp}}\,dx\,dy +\int_{\Omega}\left( -\frac{\lambda u^{p}_{+}}{p}-\frac{u^{p^{*}_{s}}_{+}}{p^{*}_{s}}\right)\,dx 
		\\
		&+\mu \left[ \int_{\{-1<u<0\}}\left( u-\frac{u|u|^{p-1}}{p}\right) \,dx-\left(1- \frac{1}{p}\right) |\{u\leq -1\} |\right]
		\\
		&\hspace{8cm}+\mu \int_{\{u\ge 0\}} u\,dx
	\end{split}
\end{equation*}
where $|.|$ denotes the Lebesgue measure in $\mathbb{R}^{N}$. For all $v\in 	D_{0}^{s,p}(\Omega)$, we also have 
\begin{equation*}
    \begin{aligned}
          \langle DE_{\mu}(u){,}v\rangle= &\int_{\mathbb{R}^N \times \mathbb{R}^N} \dfrac{|u(x)-u(y)|^{p-2}(u(x)-u(y))(v(x)-v(y))}{|x-y|^{N+sp}}\,dx\,dy
          \\&\hspace{1.8cm}
			+\mu\left[ \int_{\{u\ge 0\}} v\,dx+ \int_{\{-1<u<0\}}\left(1- |u|^{p-1}\right)v \,dx \right] \quad 
          \\
		&\hspace{5.8cm}+\int_{\Omega}\left(\lambda u_{+}^{p-1}v-u_{+}^{p^{*}_{s}-1}v\right)\,dx .			
    \end{aligned}
\end{equation*}We now define $S_{s,p}$, the best constant in Sobolev inequality by 
\begin{equation}\label{nscp7}
	S_{s,p}=\inf_{u\in D^{s,p}_{0}(\Omega)\setminus{\{0\}}}	\dfrac{\displaystyle\int_{\mathbb{R}^N \times \mathbb{R}^N} \dfrac{|u(x)-u(y)|^{p}}{|x-y|^{N+sp}}\,dx\, dy}{\left( \displaystyle\int_{\Omega}|u(x)|^{p^{*}_{s}}\,dx\right)^{p/p^{*}_{s}}}	
\end{equation}
We encourage the readers to go through \cite{BNMPY} for more results regarding the minimization problem \eqref{nscp7}.
We also recall the definition of the fractional gradient here.

\begin{definition}\cite{BSNCC}\label{nsd1} $(s,p)$ gradient of a function $v\in D_{0}^{s,p}(\mathbb{R}^{N})$ is defined as
	\begin{equation*}
		|D^{s}v(x)|^{p}=\int_{\mathbb{R}^{N}} \frac{|v(x+h)-v(x)|^{p}}{|h|^{N+sp}}\,dh.
	\end{equation*}   
\end{definition}
\noi We note that $(s,p)$ gradient is well defined a.e in $\mathbb{R^{N}}$ and $|D^{s}v| \in L^{p}(\mathbb{R^{N}})$. Next we recall the concentration compactness theorem \cite{BSNCC}.
\begin{theorem}\label{cct}\cite[Theorem 1.1]{BSNCC}
Let $(u_{n})\subset D^{s,p}_{0}(\mathbb{R^{N}})$ be a weakly convergent subsequence with weak limit $u$. Then there exist two bounded measures $\kappa$ and $\nu$, an atmost enumerable set of indices $I$, and positive real numbers $\kappa_{i},\nu_{i}$, points $x_{i}\in \overline{\Omega},i\in I$, such that the following convergence hold weakly$^{*}$ in the sense of measures.
\begin{align*}		
|D^{s}u_{n}|^{p}dx &\rightharpoonup \kappa\ge |D^{s}u|^{p}dx+\sum_{i\in I}\kappa_{i}\delta_{x_{i}}\\
|u_{n}|^{p^{*}_{s}}dx &\rightharpoonup \nu=|u|^{p^{*}_{s}}dx+\sum_{i\in I}\nu_{i}\delta_{x_{i}}\\
S^{1/p}_{s,p}\nu^{1/p^{*}_{s}}_{i}&\leq \kappa^{1/p}_{i}\quad \text{for all}\quad i\in I
\end{align*}
where $S_{s,p}$ as in \eqref{nscp7}.
\end{theorem}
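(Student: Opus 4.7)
The plan follows Lions' concentration-compactness strategy adapted to the fractional $p$-Laplacian via the $(s,p)$-gradient of Definition \ref{nsd1}. Since $(u_n)$ is bounded in $D^{s,p}_0(\mathbb{R}^N)$, the sequences of nonnegative Radon measures $|D^s u_n|^p\,dx$ and $|u_n|^{p^*_s}\,dx$ have uniformly bounded total mass; the Banach--Alaoglu theorem on $\mathcal{M}(\mathbb{R}^N)=C_0(\mathbb{R}^N)^*$ yields, up to a common subsequence, weak-$*$ limits $\kappa$ and $\nu$. Setting $v_n:=u_n-u$, we have $v_n\rightharpoonup 0$ in $D^{s,p}_0(\mathbb{R}^N)$, $v_n\to 0$ a.e., and strongly in $L^p_{\text{loc}}(\mathbb{R}^N)$ by Rellich--Kondrachov. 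Extract further subsequences so that $|D^s v_n|^p\,dx\rightharpoonup^*\tilde{\kappa}$ and $|v_n|^{p^*_s}\,dx\rightharpoonup^*\tilde{\nu}$. The Brezis--Lieb lemma, applied pointwise in $(x,h)$ and then integrated in $h$, gives the $L^1_{\text{loc}}$-convergences $|D^s u_n|^p-|D^s v_n|^p-|D^s u|^p\to 0$ and $|u_n|^{p^*_s}-|v_n|^{p^*_s}-|u|^{p^*_s}\to 0$. Passing to limits in the sense of measures yields the decompositions $\kappa=|D^s u|^p\,dx+\tilde{\kappa}$ and $\nu=|u|^{p^*_s}\,dx+\tilde{\nu}$. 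Since each $v_n$ vanishes outside $\Omega$, the measure $\tilde{\nu}$ is supported in $\overline{\Omega}$.

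The central step is to derive a reverse Sobolev inequality coupling $\tilde{\kappa}$ and $\tilde{\nu}$. For $\varphi\in C_c^\infty(\mathbb{R}^N)$, applying \eqref{nscp7} to $\varphi v_n$ gives
\begin{equation*}
S_{s,p}\Bigl(\int|\varphi v_n|^{p^*_s}\,dx\Bigr)^{p/p^*_s}\le\iint_{\mathbb{R}^N\times\mathbb{R}^N}\frac{|\varphi(x)v_n(x)-\varphi(y)v_n(y)|^p}{|x-y|^{N+sp}}\,dx\,dy.
\end{equation*}
Decomposing $\varphi(x)v_n(x)-\varphi(y)v_n(y)=\varphi(y)(v_n(x)-v_n(y))+v_n(x)(\varphi(x)-\varphi(y))$ and using $|a+b|^p\le(1+\epsilon)|a|^p+C_\epsilon|b|^p$, the right-hand side is bounded, after Fubini, by
\begin{equation*}
(1+\epsilon)\int|\varphi(y)|^p|D^s v_n(y)|^p\,dy+C_\epsilon\int|v_n(x)|^p|D^s\varphi(x)|^p\,dx.
\end{equation*}
Because $\varphi$ is smooth and compactly supported, a direct computation gives $|D^s\varphi(x)|\lesssim(1+|x|)^{-(N+sp)/p}$ for large $|x|$, so $|D^s\varphi|\in L^\infty\cap L^{N/s}(\mathbb{R}^N)$. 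Splitting the cross-integral into a large ball $B_R$ and its complement, the local part vanishes by $v_n\to 0$ in $L^p(B_R)$, while the tail is controlled by H\"older with exponents $p^*_s/p$ and $N/(sp)$ together with the uniform $L^{p^*_s}$-bound on $v_n$ and the smallness of $\|D^s\varphi\|_{L^{N/s}(B_R^c)}^p$. Hence the cross-term tends to $0$. Passing $n\to\infty$ and then $\epsilon\to 0$ yields
\begin{equation*}
S_{s,p}\Bigl(\int|\varphi|^{p^*_s}\,d\tilde{\nu}\Bigr)^{p/p^*_s}\le\int|\varphi|^p\,d\tilde{\kappa}\qquad\text{for all } \varphi\in C_c^\infty(\mathbb{R}^N).
\end{equation*}

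Applying Lions' classical reverse-H\"older principle (the elementary measure-theoretic lemma underlying his first concentration-compactness paper) to this inequality between the positive measures $\tilde{\kappa}$ and $\tilde{\nu}$ forces $\tilde{\nu}$ to be purely atomic: $\tilde{\nu}=\sum_{i\in I}\nu_i\delta_{x_i}$ for an at most countable index set $I$, points $x_i\in\overline{\Omega}$, and weights $\nu_i>0$. Setting $\kappa_i:=\tilde{\kappa}(\{x_i\})>0$ and testing the reverse Sobolev inequality with $\varphi$ concentrating at $x_i$ yields $S_{s,p}^{1/p}\nu_i^{1/p^*_s}\le\kappa_i^{1/p}$. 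Combining with the Brezis--Lieb decompositions of the first paragraph gives $\nu=|u|^{p^*_s}\,dx+\sum_i\nu_i\delta_{x_i}$ and $\kappa=|D^s u|^p\,dx+\tilde{\kappa}\ge|D^s u|^p\,dx+\sum_i\kappa_i\delta_{x_i}$, completing the proof. The principal obstacle is showing the vanishing of the cross-term $\int|v_n|^p|D^s\varphi|^p\,dx$: in contrast with the local Lions argument, where $|\nabla\varphi|$ is compactly supported, the fractional gradient $|D^s\varphi|$ is globally supported, so the vanishing hinges on the precise tail decay $|D^s\varphi(x)|\lesssim|x|^{-(N+sp)/p}$ together with a cut-off plus H\"older interpolation between $L^p_{\text{loc}}$-convergence and uniform $L^{p^*_s}$-bounds on $v_n$.
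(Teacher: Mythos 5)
The paper never proves Theorem \ref{cct}; it is imported verbatim as \cite[Theorem 1.1]{BSNCC}, so there is no internal proof to compare against. Your argument is, in substance, a correct reconstruction of the Lions-type proof used in that line of work: Brezis--Lieb applied in the product space $\mathbb{R}^N\times\mathbb{R}^N$ with the weight $|x-y|^{-N-sp}$ yields $\kappa=|D^su|^p\,dx+\tilde{\kappa}$ and $\nu=|u|^{p^*_s}\,dx+\tilde{\nu}$; the Sobolev inequality applied to $\varphi v_n$ plus Lions' reverse-H\"older lemma gives the atomicity of $\tilde{\nu}$ and, testing with functions concentrating at $x_i$, the relation $S_{s,p}^{1/p}\nu_i^{1/p^*_s}\le\kappa_i^{1/p}$, from which the stated inequality for $\kappa$ follows. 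You also correctly isolate and resolve the genuinely nonlocal difficulty, the cross term $\int|v_n|^p|D^s\varphi|^p\,dx$: the decay $|D^s\varphi(x)|\lesssim(1+|x|)^{-(N+sp)/p}$ gives $|D^s\varphi|\in L^\infty\cap L^{N/s}(\mathbb{R}^N)$, and the splitting into $B_R$ (where $v_n\to0$ strongly in $L^p$ by the local compact embedding) and $B_R^c$ (H\"older with exponents $p^*_s/p$ and $N/(sp)$ against the uniform $L^{p^*_s}$ bound on $v_n$) is exactly the right mechanism. Two details worth making explicit: the constant in \eqref{nscp7} is defined via $D_0^{s,p}(\Omega)$, so either note that it coincides with the Sobolev constant on $\mathbb{R}^N$ or use that $\varphi v_n$ vanishes outside $\Omega$ (the same fact that places the atoms $x_i$ in $\overline{\Omega}$, as you observe); and the weak-$*$ limits $\kappa,\nu,\tilde{\kappa},\tilde{\nu}$ together with the a.e.\ convergence of $v_n$ are obtained along a further subsequence, which is consistent with how the theorem is invoked in the paper (e.g.\ in Lemma \ref{nscpl1}).
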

\noi We start this section by finding the level of PS-condition.
\begin{lemma}\label{nscpl1}	
For any fixed $\lambda,\mu >0$, $ E_{\mu}$ satisfies the $(PS)_{c}$ condition for all
	\begin{equation}\label{nscp8}
		c<\frac{s}{N}S^{N/sp}_{s,p}-\left( 1-\frac{1}{p}\right) \mu|\Omega|.
	\end{equation}	
\end{lemma}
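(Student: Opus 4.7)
The plan is to follow the Brezis--Nirenberg-type argument adapted to the fractional $p$-Laplacian, exploiting the concentration-compactness principle of Theorem \ref{cct} to exclude energy concentration below the critical threshold in \eqref{nscp8}. Let $(u_n) \subset D_0^{s,p}(\Omega)$ satisfy $E_\mu(u_n)\to c$ and $DE_\mu(u_n)\to 0$ in $D_0^{s,p}(\Omega)^{*}$, and write $F$ for the primitive of $f$ so that
\[ F(t)=\begin{cases} t, & t\ge 0,\\ t+\tfrac{|t|^{p}}{p}, & -1<t<0,\\ -\bigl(1-\tfrac{1}{p}\bigr), & t\le -1. \end{cases} \]

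The first step is \textbf{boundedness} of $(u_n)$. Forming $E_\mu(u_n)-\tfrac{1}{p}\langle DE_\mu(u_n),u_n\rangle$ cancels the Gagliardo norm and the subcritical $\lambda u_+^{p}$-term, leaving
\[ \Bigl(\tfrac{1}{p}-\tfrac{1}{\pst}\Bigr)\!\int_\Omega u_{n+}^{\pst}\,dx + \mu\!\int_\Omega\!\Bigl(F(u_n)-\tfrac{1}{p}f(u_n)u_n\Bigr)dx = c+o(1)+o(\|u_n\|). \]
The second integrand is bounded below (see the last paragraph) and grows at most linearly in $u_{n+}$, so $\int u_{n+}^{\pst}$ stays bounded. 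Plugging this back into $E_\mu(u_n)$ and using the variational characterization $\int u_{n+}^{p}\le \lambda_1^{-1}\|u_n\|^{p}$ together with $\lambda<\lambda_1$ to absorb the $\lambda$-term gives $\|u_n\|\le C$. Passing to a subsequence, $u_n\rightharpoonup u$ in $D_0^{s,p}(\Omega)$, $u_n\to u$ in $L^{r}(\Omega)$ for every $r<\pst$, and $u_n\to u$ a.e. Applying Theorem \ref{cct} to the $(s,p)$-gradients produces an at most countable set $I$ and points $x_i\in\overline{\Omega}$ with masses $\kappa_i,\nu_i\ge 0$ satisfying $S_{s,p}\nu_i^{p/\pst}\le \kappa_i$.

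The decisive step is to show $I=\emptyset$. For each $i$, I would test $DE_\mu(u_n)$ against $\phi_{i,\varepsilon}u_n$ with $\phi_{i,\varepsilon}$ a standard cutoff supported in $B(x_i,2\varepsilon)$ and equal to $1$ on $B(x_i,\varepsilon)$; as $n\to\infty$ then $\varepsilon\to 0$, the compact lower-order pieces (the $\lambda u_{n+}^{p-1}$-term and the bounded $\mu f(u_n)$-term) drop out by the strong $L^{p}$-convergence, and one obtains $\kappa_i=\nu_i$, hence the dichotomy $\nu_i=0$ or $\nu_i\ge S_{s,p}^{N/sp}$. The main obstacle is precisely this cutoff computation: the nonlocal double integral
\[ \iint \frac{|u_n(x)-u_n(y)|^{p-2}(u_n(x)-u_n(y))\bigl(\phi_{i,\varepsilon}(x)u_n(x)-\phi_{i,\varepsilon}(y)u_n(y)\bigr)}{|x-y|^{N+sp}}\,dx\,dy \]
does not split cleanly, so the mixed term containing $(\phi_{i,\varepsilon}(x)-\phi_{i,\varepsilon}(y))$ must be handled via H\"older's inequality, the $(s,p)$-gradient inequality of Definition \ref{nsd1}, and a Vitali-type convergence argument exactly as in \cite{BSNCC}.

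Finally, I would rule out concentration by comparison with the PS level. Passing $DE_\mu(u_n)\to 0$ to the limit against fixed test functions shows that $u$ is a critical point of $E_\mu$, and substituting the resulting identity into $E_\mu(u)$ yields
\[ E_\mu(u)=\frac{s}{N}\!\int_\Omega u_+^{\pst}\,dx+\mu\!\int_\Omega\!\Bigl(F(u)-\tfrac{1}{p}f(u)u\Bigr)dx\ \ge\ -\Bigl(1-\tfrac{1}{p}\Bigr)\mu|\Omega|, \]
where the pointwise bound $F(t)-\tfrac{1}{p}f(t)t\ge -(1-\tfrac{1}{p})$ is verified case-by-case from the explicit formula for $F$. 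Combining with the decomposition $c=E_\mu(u)+\frac{s}{N}\sum_i\nu_i$ obtained from concentration-compactness, the presence of any $\nu_{i_0}>0$ would force
\[ c\ \ge\ \frac{s}{N}S_{s,p}^{N/sp}-\Bigl(1-\tfrac{1}{p}\Bigr)\mu|\Omega|, \]
contradicting \eqref{nscp8}. Hence $I=\emptyset$, $\int u_{n+}^{\pst}\to\int u_+^{\pst}$, and strong convergence $u_n\to u$ in $D_0^{s,p}(\Omega)$ follows from a Brezis--Lieb splitting together with the $(S_+)$-property of $(-\Delta)_p^{s}$.
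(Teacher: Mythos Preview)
Your proposal is correct and follows essentially the same concentration--compactness route as the paper (boundedness, Theorem~\ref{cct}, cutoff testing to get the dichotomy, and energy comparison with the threshold~\eqref{nscp8}). The only notable differences are that the paper applies Theorem~\ref{cct} to the positive parts $u_{n+}$ and tests $DE_\mu(u_n)$ against $\varphi_{i,\rho}u_{n+}$ rather than $\phi_{i,\varepsilon}u_n$ (since only $u_{n+}^{\pst}$ appears in the energy, you should be careful to keep the $\nu_i$ coming from $u_{n+}^{\pst}$ throughout), and it derives the upper bound $\nu_i\le \tfrac{N}{s}\bigl[(1-\tfrac{1}{p})\mu|\Omega|+c\bigr]$ directly from $E_\mu(u_n)-\tfrac{1}{p}\langle DE_\mu(u_n),u_n\rangle$ without first asserting that the weak limit $u$ is a critical point.
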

\begin{proof}
Let $(u_{n})$ be a $(PS)_{c}$ sequence. We can show that sequence  $(u_{n})$ is bounded in $D_{0}^{s,p}(\Omega)$ following similar steps of \cite[Lemma 2.1]{PSS}.
Since $(u_{n})$ is bounded so is $(u_{n+})$, a renamed subsequence which converges to some $u_{+}\ge 0$ weakly in $D_{0}^{s,p}(\Omega)$, strongly in $L^{q}(\Omega)$ for all $q\in [1,p^{*}_{s})$, $a.e.$ in $\Omega$ and  
	\begin{equation}\label{nscp14}
		|D^{s}u_{n+}|^{p}dx \rightharpoonup \kappa, \quad u_{n+}^{p^{*}_{s}}dx \rightharpoonup \nu
	\end{equation}
	The convergence holds weakly* in the sense of measure, where $\kappa$ and $\nu$ are bounded measures. Using concentration compactness theorem \ref{cct} there exists a countable index set $I$ and  positive real numbers $\kappa_{i},\nu_{i}$, points $x_{i}\in \overline{\Omega},\thinspace  i\in I$ such that 
	\begin{equation}\label{nscp15}
		\kappa\ge |D^{s}u_{+}|^{p}dx+\sum_{i\in I}\kappa_{i}\delta_{x_{i}},\quad \nu=u_{+}^{p^{*}_{s}}dx+\sum_{i\in I}\nu_{i}\delta_{x_{i}}	
	\end{equation}
	and $S^{1/p}_{s,p}\nu^{1/p^{*}_{s}}_{i}\leq \kappa^{1/p}_{i}$ for all $i\in I $. Our aim is to prove $I=\emptyset$. Suppose by contradiction we fix a concentration point $x_{i}$. We define a smooth function $\varphi:\mathbb{R^{N}}\rightarrow [0,1] $ such that $\varphi(x)	=1$ for $|x|\leq 1$ and $\varphi(x)=0$ for $|x|\ge 2$ and for $i\in I$ and $\rho >0$ set
	\begin{equation*}
		\varphi_{i,\rho}(x)=\varphi\left(\frac{x-x_{i}}{\rho}\right), \thickspace x\in \mathbb{R}^{N}.
	\end{equation*}
	Clearly $\varphi_{i,\rho}:\mathbb{R}^{N}\rightarrow [0,1] $ is a smooth function. We note that the sequence $(\varphi_{i,\rho}u_{n+})$ is bounded in $D^{s,p}_{0}(\Omega)$. Taking $v=\varphi_{i,\rho}u_{n+}$ in $	\langle DE_{\mu}(u_{n}){,}v\rangle$, we get
	\begin{equation}\label{nscp16}
		\begin{split}
			&\int_{\mathbb{R}^N \times \mathbb{R}^N} \dfrac{|u_{n}(x)-u_{n}(y)|^{p-2}(u_{n}(x)-u_{n}(y))(\varphi_{i,\rho}u_{n+}(x)-\varphi_{i,\rho}u_{n+}(y))}{|x-y|^{N+sp}}\,dx\,dy\\
			&=\int_{\Omega}\left(\lambda u_{n+}^{p-1}\varphi_{i,\rho}u_{n+}+u_{n+}^{p^{*}_{s}-1}\varphi_{i,\rho}u_{n+}\right)\,dx 
			-\mu \int_{\Omega} \varphi_{i,\rho}u_{n+}\,dx + o_{n}(1)\|u_{n+}\|.	 
		\end{split}
	\end{equation}
	 The term on the left-hand side of \eqref{nscp16} can be estimated as
	\begin{equation}\label{nscp17}
		\begin{split}
			&\int_{\mathbb{R}^N \times \mathbb{R}^N} \dfrac{|u_{n}(x)-u_{n}(y)|^{p-2}(u_{n}(x)-u_{n}(y))(\varphi_{i,\rho}u_{n+}(x)-\varphi_{i,\rho}u_{n+}(y))}{|x-y|^{N+sp}}\,dx\,dy\\
			&\ge \int_{\mathbb{R}^N \times \mathbb{R}^N} \dfrac{|u_{n}(x)-u_{n}(y)|^{p-2}(u_{n}(x)-u_{n}(y))(\varphi_{i,\rho}(x)-\varphi_{i,\rho}(y))}{|x-y|^{N+sp}}u_{n+}(y) \,dx\,dy  \\
			&\hspace{6cm}+\int_{\mathbb{R}^N \times \mathbb{R}^N}\dfrac{|u_{n+}(x)-u_{n+}(y)|^{p}}{|x-y|^{N+sp}} \varphi_{i,\rho}(x)\,dx\,dy.
		\end{split}
	\end{equation}	
\noi For the second term on right hand side of \eqref{nscp17}, for a fixed $x$, taking the transformation
$y-x=h$ and using the definition \eqref{nsd1} and \eqref{nscp14}, we have 
	\begin{equation}\label{nscp18}
		\int_{\mathbb{R}^N \times \mathbb{R}^N}\dfrac{|u_{n+}(x)-u_{n+}(y)|^{p}}{|x-y|^{N+sp}} \varphi_{i,\rho}(x)\,dx\,dy \rightarrow \int_{\mathbb{R}^{N}} \varphi_{i,\rho} \,d\kappa.	
	\end{equation}
	For the first term in right hand side of \eqref{nscp17}, using the definition of $(s,p)$ gradient, we have 
	\begin{equation}\label{nscp19}
		\begin{split}
			&\int_{\mathbb{R}^N \times \mathbb{R}^N} \dfrac{|u_{n}(x)-u_{n}(y)|^{p-2}(u_{n}(x)-u_{n}(y))(\varphi_{i,\rho}(x)-\varphi_{i,\rho}(y))}{|x-y|^{N+sp}}u_{n+}(y) \,dx\,dy\\
			&\hspace{7cm}\leq C \left(\int_{\mathbb{R}^{N}} |u_{n+}|^{p} |D^{s}\varphi_{i,\rho}|^{p\,dx}\right)^{\frac{1}{p}}.		
		\end{split} 		
	\end{equation}
	Now we consider the first term on right hand side of \eqref{nscp16} i.e.,
	\begin{equation*}
		\int_{\Omega}\left(\lambda u_{n+}^{p}\varphi_{i,\rho}+u_{n+}^{p^{*}_{s}}\varphi_{i,\rho}-\mu\varphi_{i,\rho} u_{n+}\right)\,dx . 
	\end{equation*}
	Using \eqref{nscp14} we have
	\begin{equation}\label{nscp20}
		\int_{\Omega} \varphi_{i,\rho}u_{n+}^{p^{*}_{s}}\,dx \rightarrow \int_{\Omega} \varphi_{i,\rho}\,d\nu.
	\end{equation}
	Now we observe that 
	\begin{equation*} 
		\begin{split}
			\bigg|\int_{\mathbb{R}^N \times \mathbb{R}^N}& \dfrac{|u_{n}(x)-u_{n}(y)|^{p-2}(u_{n}(x)-u_{n}(y))(\varphi_{i,\rho}(x)-\varphi_{i,\rho}(y))}{|x-y|^{N+sp}}u_{n+}(y) \,dx\,dy \\
			&\hspace{5.8cm}+\int_{\Omega}\left(-\lambda u_{n+}^{p}\varphi_{i,\rho}+\mu\varphi_{i,\rho} u_{n+}\right) \,dx \bigg|  \end{split}
	\end{equation*}
	\begin{equation*}
		\le C \left[  \left(\int_{\mathbb{R}^{N}} |u_{n+}|^{p} |D^{s}\varphi_{i,\rho}|^{p}\,dx\right)^{\frac{1}{p}} + \int_{\Omega\cap B_{2\rho}(x_{i})} u_{n+}^{p}\,dx + \mu \int_{\Omega\cap B_{2\rho}(x_{i})} u_{n+}\,dx\right] .
	\end{equation*}
	We also have 
	\begin{equation}\label{nscp21}
		\int_{\Omega\cap B_{2\rho}(x_{i})} u_{n+}^{p}\,dx\rightarrow \int_{\Omega\cap B_{2\rho}(x_{i})} u_{+}^{p}\,dx. 
	\end{equation} 
	Next passing to the limit in \eqref{nscp16} and using \eqref{nscp17}-\eqref{nscp21}, we get
	\begin{equation*}
		\begin{split}
			&\int_{\mathbb{R}^{N}} \varphi_{i,\rho} \,d\kappa-\int_{\Omega} \varphi_{i,\rho}\,d\nu\\
			&\leq C\left[  \left(\int_{\mathbb{R}^{N}} |u_{n+}|^{p} |D^{s}\varphi_{i,\rho}|^{p}\,dx\right)^{\frac{1}{p}} + \int_{\Omega\cap B_{2\rho}(x_{i})} u_{+}^{p}\,dx + \mu  \int_{\Omega\cap B_{2\rho}(x_{i})} u_{+}\,dx \right].	
		\end{split}
	\end{equation*}
	Now letting $\rho\rightarrow 0$, the right hand side of the above inequality goes to $0$. This implies $\kappa_{i}\leq \nu_{i}$, which together with $\nu_{i}>0$ and $S^{1/p}_{s,p}\nu^{1/p^{*}_{s}}_{i}\leq \kappa^{1/p}_{i}$ gives $\nu_{i}\ge S^{N/sp}_{s,p}$. On the other hand, similar to  \cite[Lemma 2.1]{PSS}  and using \eqref{nscp14} and \eqref{nscp15} we get
	\begin{equation*}
		\nu_{i}	\leq \frac{N}{s}\left[ \left(1- \frac{1}{p}\right) \mu|\Omega|+c\right] <  S^{N/sp}_{s,p}
	\end{equation*}
	a contradiction. Hence $I=\emptyset$ and 
	\begin{equation}\label{nscp22}
		\int_{\Omega} u_{n+}^{p^{*}_{s}}\,dx \rightarrow \int_{\Omega} u_{+}^{p^{*}_{s}}\,dx.	
	\end{equation}
	Next passing to further subsequence, $u_{n}$ converges weakly to $u$ i.e., $u_{n}\rightharpoonup u$ in $D^{s,p}_{0}(\Omega)$, strongly in $L^{r}(\Omega)$ for $r\in [1, p^{*}_{s})$ and a.e. in $\Omega$. \\
	We note that $u$ weakly solves the problem \eqref{nscp6} and using the Brezis-Lieb type lemma and \eqref{nscp22} we can show that $u_{n}\rightarrow u$ in $ D^{s,p}_{0}(\Omega)$. This completes the proof of Lemma \ref{nscpl1}.
\end{proof}
\noi
Next, we state some results regarding the minimization problem \eqref{nscp7} , which is employed to establish mountain pass results, and introduce $u_{\epsilon,\delta}$ in the same way as depicted in   \cite[Equation (2.16)]{BNMPY}. For the sake of completeness, we shall discuss the definition here.
Let $U$ be a normalised radially symmetric nonnegative decreasing minimizer  of $S_{s,p}$ ( see \cite[propsition 2.1]{BNMPY}) which satisfies 
\begin{equation}\label{min}
	(-\Delta)^{s}_{p} U=U^{p^{*}_{s}-1},\thickspace \text{and}\thickspace \|U\|^{p}=\|U\|_{L_{p^{*}_{s}}(\mathbb{R}^{N})}^{p^{*}_{s}}=S^{N/sp}_{s,p}.
\end{equation} 
For any $\epsilon>0$, 
\begin{equation}
	U_{\epsilon}(x)=\frac{1}{\epsilon^{(N-sp)/p}}U\left( \frac{|x|}{\epsilon}\right) 
\end{equation}
denotes the associated family of minimisers for $S_{s,p}$ and also satisfies \eqref{min}.
Due to the absence of explicit formula for $U$, we have the following asymptotic estimates.
\begin{lemma}\label{7l1}
	There exists $c_{1},c_{2}>0$ and $\theta>1$ such that for all $r\ge 1$,
	\begin{equation*}
		\dfrac{c_{1}}{r^{(N-sp)/(p-1)}}\leq U(r)\leq \dfrac{c_{2}}{r^{(N-sp)/(p-1)}}
	\end{equation*}
	and 
	\begin{equation*}
		\frac{U(\theta r)}{U(r)}\leq \frac{c_{2}}{c_{1}}\dfrac{1}{\theta^{(N-sp)/(p-1)}}.
	\end{equation*}
\end{lemma}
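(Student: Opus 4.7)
The plan is to establish the two-sided pointwise decay $c_{1}\, r^{-\gamma^{*}} \le U(r) \le c_{2}\, r^{-\gamma^{*}}$ for $r \ge 1$, where $\gamma^{*} := (N-sp)/(p-1)$. The second inequality of the lemma then follows automatically: inserting both sides of the two-sided bound gives $U(\theta r)/U(r) \le (c_{2}/c_{1})\,\theta^{-\gamma^{*}}$ for every $\theta > 1$, so any $\theta > 1$ works.

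To begin, I would invoke the Euler-Lagrange equation $\fps U = U^{\pst - 1}$ satisfied by $U$, together with the regularity and symmetry information already recorded in \eqref{min}: $U$ is radially decreasing, continuous, strictly positive, and lies in $L^{\infty}(\RR^{N}) \cap L^{\pst}(\RR^{N})$, so in particular $U(r) \to 0$ as $r \to \infty$. By strict positivity and continuity, both estimates are trivial on any compact annulus $\{1 \le r \le R_{0}\}$, so the real content is to pin down the decay rate at infinity.

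For the upper bound at infinity I would use a radial super-solution argument. The key computation is that for $W_{\gamma}(x) = |x|^{-\gamma}$, a scaling argument together with radial symmetry yields $\fps W_{\gamma}(x) = C(\gamma, N, s, p)\,|x|^{-\gamma(p-1) - sp}$. At the critical exponent $\gamma = \gamma^{*}$ this becomes $C_{*}\, |x|^{-N}$, which decays strictly slower than $U^{\pst - 1}(x) \lesssim |x|^{-\gamma^{*}(\pst - 1)} = |x|^{-N - sp/(p-1)}$ once even a rough preliminary decay of $U$ (obtainable via Moser iteration or a De Giorgi-type argument applied to the critical equation) is in hand. I would then construct a smoothed profile $\overline{V}$ that coincides with $A |x|^{-\gamma^{*}}$ outside a ball $B_{R_{0}}$, choose $A$ large enough that $\overline{V} \ge U$ on $\partial B_{R_{0}}$, and apply the weak comparison principle for the fractional $p$-Laplacian on the exterior domain $\RR^{N} \setminus B_{R_{0}}$ to conclude $U \le \overline{V}$ there.

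For the lower bound I would mirror the argument with a sub-solution $\underline{V}$ of the form $a|x|^{-\gamma^{*}}$, suitably regularized near the origin, with $a$ chosen small enough that $\fps \underline{V} \le \underline{V}^{\pst - 1}$ in the exterior and $\underline{V} \le U$ on $\partial B_{R_{0}}$ (using strict positivity of $U$ there, furnished by the strong maximum principle). Comparison then gives $U \ge \underline{V}$ outside $B_{R_{0}}$, which combined with the compact-annulus control produces the desired two-sided estimate for all $r \ge 1$. The main obstacle is the explicit sign analysis and quantitative estimate of $\fps W_{\gamma^{*}}$: because the operator is nonlocal and nonlinear, the defining principal-value integral must be evaluated with care (in particular to show that $C_{*} > 0$, exploiting that $W_{\gamma^{*}}$ is radial and decreasing), and the comparison principle must be applied in an unbounded exterior domain with decay at infinity as a boundary-type condition.
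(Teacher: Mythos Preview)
The paper does not supply its own proof of this lemma; it is quoted from \cite{BNMPY} (the sharp two-sided decay of Sobolev extremals for the fractional $p$-Laplacian goes back to Brasco--Mosconi--Squassina). Your derivation of the second inequality from the first is correct and is exactly how it is obtained in that reference, and your barrier-and-comparison outline for the two-sided bound $c_{1}r^{-\gamma^{*}}\le U(r)\le c_{2}r^{-\gamma^{*}}$ is the strategy used there, so there is essentially nothing to contrast.

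One technical point worth flagging: the scaling identity $\fps W_{\gamma^{*}}(x)=C_{*}\,|x|^{-N}$ you invoke is formally correct, but for the nonlinear operator verifying that the principal-value integral converges and that $C_{*}>0$ is a genuine computation---it does not follow from radial monotonicity alone. In the cited literature this is handled via an explicit integral representation of the constant (essentially a one-dimensional hypergeometric-type integral), and the comparison barriers are built from regularized profiles agreeing with $|x|^{-\gamma^{*}}$ only at infinity, exactly as you anticipate. With those details filled in, your plan is sound.
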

\noi We have some auxiliary estimates from \cite{BNMPY}. For $\epsilon,\delta>0$, and $\theta$ as in Lemma \ref{7l1}, set
\begin{equation*}
	m_{\epsilon,\delta}=\dfrac{U_{\epsilon}(\delta)}{U_{\epsilon}(\delta)-U_{\epsilon}(\theta\delta)}
\end{equation*}
and 
\begin{equation*} 
	g_{\epsilon,\delta}(t) =
	\begin{cases}
		0      & \quad \text{if}\thickspace 0\leq t\leq U_{\epsilon}(\theta\delta)\\
		m^{p}_{\epsilon,\delta}(t-U_{\epsilon}(\theta\delta))& \quad \text{if}\thickspace U_{\epsilon}(\theta\delta)\leq t\leq U_{\epsilon}(\delta) \\
		t+U_{\epsilon}(\delta)(m^{p-1}_{\epsilon,\delta}-1) & \quad \text{if}\thickspace t\ge U_{\epsilon}(\delta),
	\end{cases}
\end{equation*}
and let 
\begin{equation}
	G_{\epsilon,\delta}(t) =\int_{0}^{t}g^{\prime}_{\epsilon,\delta}(\tau)^{\frac{1}{p}}\,d\tau=
	\begin{cases}
		0      & \quad \text{if}\thickspace 0\leq t\leq U_{\epsilon}(\theta\delta)\\
		m_{\epsilon,\delta}(t-U_{\epsilon}(\theta\delta))& \quad \text{if}\thickspace U_{\epsilon}(\theta\delta)\leq t\leq U_{\epsilon}(\delta) \\
		t & \quad \text{if}\thickspace t\ge U_{\epsilon}(\delta).
	\end{cases}
\end{equation} 
The functions $g_{\epsilon,\delta}$ and $G_{\epsilon,\delta}$ are nondecreasing and absolutely continuous. Consider the radially symmetric nonincreasing function
\begin{equation*}
	u_{\epsilon,\delta}(r)=G_{\epsilon,\delta}(U_{\epsilon}(r))
	,
\end{equation*}
which satisfies
\begin{equation*} u_{\epsilon,\delta}(r)=
	\begin{cases}
		U_{\epsilon}(r) &\quad \text{if}\thickspace r\leq \delta\\
		0 &\quad \text{if}\thickspace r\ge \theta\delta.
	\end{cases}
\end{equation*}
 Now similar to \cite[Lemma 3.1]{PSS} and thanks to $N\geq sp^2$, we can show that for all sufficiently small $\mu >0,$ $E_{\mu}$ has a uniformly  positive mountain pass level below the threshold for compactness given in Lemma \ref{nscpl1} . \begin{lemma}\label{nscpl2} There exist positive $\mu_{0},\rho,c_{0}>0,R>\rho$ and $\beta < \frac{s}{N}S^{N/sp}_{s,p}$ such that the following hold for all $\mu \in (0,\mu_{0})$ and $\la\in (0,\la_1)$:
	\begin{itemize}
 		\item[${(i)}$] $E_{\mu}(0)=0$ and $E_{\mu}(u)\ge c_{0}$ for all $u$ such that $ \|u\|=\rho$,
		\item[${(ii)}$] $E_{\mu}(tu_{\epsilon,\delta})\leq 0$ for all $t\ge R$ and $\epsilon\leq \delta /2$ and $\delta \in (0,1]$,
		\item[${(iii)}$] denoting by $\Gamma=\left\lbrace \gamma\in C([0,1],D^{s,p}_{0}(\Omega)):\gamma(0)=0,\gamma(1)=Ru_{\epsilon,\delta}\right\rbrace $ the class of paths joining the origin to $Ru_{\epsilon,\delta}$,
		\begin{equation}\label{nscp23}
			c_{0}\leq c_{\mu}:=\inf_{\gamma \in \Gamma}	\max_{0\leq t\leq 1} E_{\mu}(\gamma(t))\leq \beta-\left( 1-\frac{1}{p}\right) \mu|\Omega|
		\end{equation}
		for all sufficiently small $\epsilon >0$,
		\item[${(iv)}$] $E_{\mu}$ has a critical point $u_{\mu}$ at the level $c_{\mu}$.
	\end{itemize}
\end{lemma}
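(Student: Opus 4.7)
The plan is to verify the four mountain pass items in order. The heart of the argument is a Brezis--Nirenberg type estimate for $u_{\epsilon,\delta}$ that separates the mountain pass level from the critical threshold $\frac{s}{N}S_{s,p}^{N/sp}$ by a $\mu$-independent amount; the $(1-\frac{1}{p})\mu|\Omega|$ correction appearing in the compactness threshold of Lemma \ref{nscpl1} is then absorbed by a small-$\mu$ perturbation.

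For (i), using $F(t) \geq -(1-\frac{1}{p})$ pointwise together with Sobolev embedding and the variational characterisation of $\lambda_1$ gives
\begin{equation*}
E_\mu(u) \geq \tfrac{1}{p}\bigl(1 - \tfrac{\lambda}{\lambda_1}\bigr)\|u\|^p - \tfrac{C}{p_s^*}\|u\|^{p_s^*} - \bigl(1 - \tfrac{1}{p}\bigr)\mu|\Omega|.
\end{equation*}
Since $\lambda < \lambda_1$ and $p_s^* > p$, I fix $\rho$ so small that the first two terms sum to some $2c_0 > 0$ on $\{\|u\|=\rho\}$, then restrict $\mu_0$ below $c_0/[(1-\tfrac{1}{p})|\Omega|]$. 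For (ii), along $tu_{\epsilon,\delta}$ the function stays nonnegative, so $F(tu_{\epsilon,\delta}) = tu_{\epsilon,\delta}$, and the critical term drives $E_\mu(tu_{\epsilon,\delta}) \to -\infty$; hence a single $R > \rho/\|u_{\epsilon,\delta}\|$ works uniformly in $\mu \in (0,\mu_0)$ and $\epsilon \leq \delta/2$.

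The decisive step is (iii), taking the straight path $\gamma_0(t) = tRu_{\epsilon,\delta} \in \Gamma$. With $A_{\epsilon,\delta} := \|u_{\epsilon,\delta}\|^p - \lambda\int_\Omega u_{\epsilon,\delta}^p$ and $B_{\epsilon,\delta} := \int_\Omega u_{\epsilon,\delta}^{p_s^*}$, an elementary maximisation of $\tau \mapsto \tfrac{\tau^p}{p}A_{\epsilon,\delta} - \tfrac{\tau^{p_s^*}}{p_s^*}B_{\epsilon,\delta}$ over $\tau \geq 0$ gives
\begin{equation*}
\max_{t \in [0,1]} E_0(\gamma_0(t)) = \tfrac{s}{N}\bigl(A_{\epsilon,\delta}/B_{\epsilon,\delta}^{p/p_s^*}\bigr)^{N/sp}.
\end{equation*}
Invoking the sharp asymptotic expansions from \cite{BNMPY} for $\|u_{\epsilon,\delta}\|^p$, $\int_\Omega u_{\epsilon,\delta}^{p_s^*}$ and $\int_\Omega u_{\epsilon,\delta}^p$ together with $N \geq sp^2$ (with the usual logarithmic refinement when $N = sp^2$), I can fix $(\epsilon,\delta)$ so that this maximum is bounded by some $\beta_0 < \tfrac{s}{N}S_{s,p}^{N/sp}$ depending only on $\lambda$. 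Setting $\beta := \beta_0 + \mu_0\bigl[R\int_\Omega u_{\epsilon,\delta} + (1-\tfrac{1}{p})|\Omega|\bigr]$ and shrinking $\mu_0$ further so that $\beta < \tfrac{s}{N}S_{s,p}^{N/sp}$, and noting that $E_\mu(\gamma_0(t)) - E_0(\gamma_0(t)) = \mu tR\int_\Omega u_{\epsilon,\delta}$, one obtains
\begin{equation*}
c_\mu \leq \max_{t} E_\mu(\gamma_0(t)) \leq \beta_0 + \mu R \int_\Omega u_{\epsilon,\delta} \leq \beta - \bigl(1-\tfrac{1}{p}\bigr)\mu|\Omega|
\end{equation*}
for $\mu \in (0,\mu_0)$, which is the upper estimate in \eqref{nscp23}. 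The lower bound $c_0 \leq c_\mu$ is immediate by continuity of $t \mapsto \|\gamma(t)\|$ combined with (i).

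Part (iv) is then the Ambrosetti--Rabinowitz mountain pass theorem applied with the $(PS)_{c_\mu}$ compactness supplied by Lemma \ref{nscpl1}. I expect the main obstacle to be producing the $\mu$-independent gap $\beta_0 < \tfrac{s}{N}S_{s,p}^{N/sp}$ in (iii): it rests on delicate asymptotics of $\int_\Omega u_{\epsilon,\delta}^p$ weighed against the error terms in the expansions of $\|u_{\epsilon,\delta}\|^p$ and $\int_\Omega u_{\epsilon,\delta}^{p_s^*}$, and the dimensional hypothesis $N \geq sp^2$ enters precisely to ensure $\int_\Omega u_{\epsilon,\delta}^p$ is large enough to beat those errors.
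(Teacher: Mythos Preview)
Your proposal is correct and follows precisely the route the paper defers to: the paper itself gives no proof of this lemma, stating only that it is ``similar to \cite[Lemma 3.1]{PSS}'' with the dimensional restriction $N\ge sp^2$ entering via the estimates in \cite{BNMPY}, and your sketch is exactly an adaptation of that argument to the fractional $p$-Laplace setting. Two minor remarks: the displayed equality $\max_{t\in[0,1]}E_0(\gamma_0(t))=\tfrac{s}{N}(A_{\epsilon,\delta}/B_{\epsilon,\delta}^{p/p_s^*})^{N/sp}$ is really an inequality $\le$ unless you also note that the unconstrained maximiser $\tau^*=(A_{\epsilon,\delta}/B_{\epsilon,\delta})^{1/(p_s^*-p)}$ lies in $[0,R]$ (which it does once $R$ is chosen using the uniform two-sided bounds on $A_{\epsilon,\delta},B_{\epsilon,\delta}$ for $\epsilon\le\delta/2$); and the constants $\rho,c_0$ in (i) cannot be uniform over \emph{all} $\lambda\in(0,\lambda_1)$ as the lemma's quantifier literally reads, so your implicit reading ``for each fixed $\lambda\in(0,\lambda_1)$'' is the intended one.
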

From the above lemma we conclude that there exists $u_\mu$ which is a weak solution of \eqref{nscp6}. Now we shall prove some more properties of $u_\mu.$
\begin{lemma}\label{nscpl3}
There exists $\mu_{*}\in (0,\mu_{0}]$  such that the following hold for all $\mu\in (0,\mu_{*})$:
	\begin{itemize}
		\item[${(i)}$] $u_{\mu}$ is uniformly bounded in $D^{s,p}_{0}(\Omega)$,
		\item[${(ii)}$] $ \int_{E} |u_{\mu}|^{p^{*}_{s}} \,dx \rightarrow 0 \thickspace \text{as} \thickspace |E|\rightarrow 0,\thickspace \text{uniformly in} \thickspace \mu$,
		\item[${(iii)}$] $u_{\mu}$ is uniformly bounded in $C_{d}^{0,\alpha}(\overline{\Omega})$ for some $\alpha \in (0,s]$.	
	\end{itemize}	
\end{lemma}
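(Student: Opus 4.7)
\textbf{Plan of proof for Lemma \ref{nscpl3}.} The three assertions will be proved in the listed order, with each step feeding the next. The principal input is that $u_\mu$ is a critical point of $E_\mu$ at the mountain pass level $c_\mu$ guaranteed by Lemma \ref{nscpl2}, so in particular $E_\mu(u_\mu)=c_\mu$ and $\langle DE_\mu(u_\mu),v\rangle=0$ for all $v\in D^{s,p}_0(\Omega)$.

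\emph{Step (i): Uniform bound in $D^{s,p}_0(\Omega)$.} I would form the standard Nehari combination
\[
c_\mu \;=\; E_\mu(u_\mu)-\tfrac{1}{p^{*}_s}\langle DE_\mu(u_\mu),u_\mu\rangle,
\]
which after cancellation reads
\[
\Bigl(\tfrac1p-\tfrac1{p^{*}_s}\Bigr)\|u_\mu\|^{p}
=c_\mu+\lambda\Bigl(\tfrac1p-\tfrac1{p^{*}_s}\Bigr)\!\int_\Omega u_{\mu+}^{p}\,dx+\mu\,R(u_\mu),
\]
where $R(u_\mu)$ collects the remaining $f$-terms and the linear $\mu$-terms, each of which is controlled by $C(1+\|u_\mu\|)$ via Hölder's inequality since $f$ is bounded. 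Using the variational characterization of $\lambda_1$ in \eqref{nscp4} to write $\int u_{\mu+}^{p}\le \lambda_1^{-1}\|u_\mu\|^{p}$ and recalling $\lambda<\lambda_1$, the $p$-homogeneous terms can be absorbed on the left, yielding
\[
\Bigl(\tfrac1p-\tfrac1{p^{*}_s}\Bigr)\!\Bigl(1-\tfrac{\lambda}{\lambda_1}\Bigr)\|u_\mu\|^{p}
\;\le\; c_\mu+C\mu(1+\|u_\mu\|).
\]
Since $c_\mu\le\beta-(1-1/p)\mu|\Omega|$ is uniformly bounded, a Young's inequality argument on $\mu\|u_\mu\|$ closes off a uniform $D^{s,p}_0$ bound for all $\mu\in(0,\mu_*)$.

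\emph{Step (ii): Equi-integrability of $|u_\mu|^{p^{*}_s}$.} Given any sequence $\mu_n\in(0,\mu_*)$, Step (i) lets me extract a weakly convergent subsequence $u_{\mu_n}\rightharpoonup u_0$ in $D^{s,p}_0(\Omega)$ and apply the concentration-compactness Theorem \ref{cct}. I would then replicate the localisation-by-$\varphi_{i,\rho}$ argument from Lemma \ref{nscpl1}: testing the equation against $\varphi_{i,\rho} u_{\mu_n+}$ and sending $n\to\infty$ first and then $\rho\to 0$ yields $\kappa_i\le\nu_i$, hence $\nu_i\ge S_{s,p}^{N/sp}$ at every concentration point. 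On the other hand, the level bound $c_{\mu_n}\le\beta-(1-1/p)\mu_n|\Omega|<\frac{s}{N}S_{s,p}^{N/sp}-(1-1/p)\mu_n|\Omega|$ from Lemma \ref{nscpl2}(iii) gives, after the same energy comparison as in Lemma \ref{nscpl1}, the bound $\nu_i<S_{s,p}^{N/sp}$. This contradiction forces $I=\emptyset$, so $u_{\mu_n}\to u_0$ strongly in $L^{p^{*}_s}(\Omega)$. Vitali's convergence theorem then gives the equi-integrability of $\{|u_{\mu_n}|^{p^{*}_s}\}$, and since the sequence $\mu_n$ was arbitrary, the equi-integrability holds uniformly in $\mu\in(0,\mu_*)$.

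\emph{Step (iii): Uniform weighted Hölder bound.} Rewrite the equation as $(-\Delta)_p^s u_\mu = h_\mu$ with $h_\mu:=\lambda u_{\mu+}^{p-1}+u_{\mu+}^{p^{*}_s-1}-\mu f(u_\mu)$, where $|f|\le 1$. The only dangerous term is the critical $u_{\mu+}^{p^{*}_s-1}$, but the equi-integrability from (ii) is exactly what is needed to run a Moser-type iteration uniformly in $\mu$: splitting $u_{\mu+}^{p^{*}_s-p}=A+B$ with $A$ having small $L^{N/sp}$-norm (by equi-integrability) and $B$ bounded, the small part is absorbed via the fractional Sobolev inequality and the bounded part is iterated, producing a uniform $L^\infty(\Omega)$ bound on $u_\mu$. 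Once $\|u_\mu\|_{L^\infty}$ is uniformly bounded, so is $\|h_\mu\|_{L^\infty}$, and the fine boundary regularity of \cite{IMSS1} applies to give $\|u_\mu/d^s\|_{C^{\alpha}(\overline\Omega)}\le C$ for some $\alpha\in(0,s]$ independent of $\mu$.

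The main obstacle is Step (iii): transferring the $L^{p^{*}_s}$-equi-integrability into a \emph{uniform} $L^\infty$ bound for a critical nonlinearity requires the Moser iteration to be carried out with the same splitting threshold for every $\mu$. Once this uniform $L^\infty$ bound is in hand, the weighted Hölder estimate is a direct citation of the boundary regularity theory, and the rest of the argument is essentially bookkeeping on the Nehari identity and the concentration-compactness scheme already used earlier in the paper.
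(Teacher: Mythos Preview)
Your proposal is correct and follows essentially the same route as the paper. For (i) you spell out explicitly the Nehari-type identity $E_\mu(u_\mu)-\tfrac{1}{p^{*}_s}\langle DE_\mu(u_\mu),u_\mu\rangle=c_\mu$ combined with $\lambda<\lambda_1$, which is exactly the computation the paper delegates to ``similar steps of \cite[Lemma 2.1]{PSS}'' via Lemma \ref{nscpl1}; for (ii) both you and the paper rerun the concentration--compactness/localisation argument of Lemma \ref{nscpl1} to exclude atoms and deduce strong $L^{p^{*}_s}$ convergence along subsequences (the paper phrases this as a contradiction with $\mu_j\to 0$, you phrase it as relative compactness implying equi-integrability, but the substance is identical); for (iii) the Moser iteration with equi-integrability splitting that you describe is precisely the content of the paper's Appendix Theorem \ref{anscpt3}, after which both proofs invoke \cite{IMSS1} for the uniform $C^{0,\alpha}_d$ bound.
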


\begin{proof}
As mentioned in the proof of Lemma \ref{nscpl1} the sequence $\{u_\mu\}$ is uniformly bounded in $D_0^{s,p}(\Omega)$ for $\mu\in (0,\mu_0)$ which proves $(i).$\\
Next we shall prove $(ii).$ On contrary let us suppose that for some sequence ${\mu_j}\rightarrow 0$ and $|E_j|\rightarrow 0$ we have $\liminf _{j\rightarrow \infty} \int_{E_j}|u_{\mu_j}|^{p_s^*} >0.$ Since $(u_{\mu_{j}})$ is bounded,  a renamed subsequence $(u_{\mu_{j+}})$  converges to some $u_{+}\ge 0$ weakly in $D_{0}^{s,p}(\Omega)$. Following the argument as in Lemma \ref{nscpl1} we get 
	\begin{equation}\label{nscp28}
		\int_{\Omega} u_{\mu_{j+}}^{p^{*}_{s}}\,dx \rightarrow \int_{\Omega} u_{+}^{p^{*}_{s}}\,dx.
	\end{equation}
This implies,  $(u_{\mu_{j}})$ then converges to $u$ in $D_{0}^{s,p}(\Omega)$, and so also in $L^{p^{*}_{s}}(\Omega)$. Then
	\begin{equation*}
		\|u_{\mu_{j}}\|_{L^{p^{*}_{s}}({E_{j}})}\leq \| u_{\mu_{j}}-u\|_{L^{p^{*}_{s}}({\Omega})}+\|u\|_{L^{p^{*}_{s}}({E_{j}})}\rightarrow 0,
	\end{equation*}
	which is a contradiction and hence (ii).
 
 Now Theorem \ref{anscpt3} is applicable and $\|u_{\mu_j}\|_\infty$ is uniformly bounded. Since $p\geq 2,$ \cite[Theorem 1.1]{IMSS1} is applicable and $u_{\mu}$ is uniformly bounded in $C_{d}^{0,\alpha}(\overline{\Omega}).$
\end{proof}	
\noi \textbf{Proof of the Theorem \ref{nscpt1}:}
We claim that $u_{\mu}$ is positive in $\Omega$ and hence a solution of \eqref{nscp3}. It is sufficient to show that for every sequence $\mu_{j}\rightarrow 0$, a subsequence $u_{\mu_{j}}$ is positive in $\Omega$. By Lemma \ref{nscpl2}, we have 
\begin{equation*}
    \begin{aligned}
          	E_{\mu_{j}}(u_{\mu_{j}})=	
	c_{\mu_{j}}\ge c_{0}
    \end{aligned}
\end{equation*}
and 
\begin{equation*}
    \begin{aligned}
          	\langle DE_{\mu_{j}}(u_{\mu_{j}}){,}v\rangle =0 \quad \forall
		\thinspace v\in 	D_{0}^{s,p}(\Omega).
    \end{aligned}
\end{equation*}
From Lemma \ref{nscpl3}, we have upto a subsequence $u_{\mu_j}\rightharpoonup u$ in $D^{s,p}_{0}(\Omega).$ Now with the help of the ideas of \cite[Theorem 2.15]{WSS1} and Brezis-Lieb type lemma, we can conclude for $\mu_{j}\rightarrow 0$
\begin{equation*}
    \begin{aligned}
          \int_{\mathbb{R}^N \times \mathbb{R}^N} \dfrac{|u_{\mu_{j}}(x)-u_{\mu_{j}}(y)|^{p}}{|x-y|^{N+sp}}\,dx\,dy \rightarrow \int_{\mathbb{R}^N \times \mathbb{R}^N} \dfrac{|u(x)-u(y)|^{p}}{|x-y|^{N+sp}}\,dx\,dy.
    \end{aligned}
\end{equation*}
We also have $u_{\mu_{j}}\rightarrow u$ in $C^{\alpha}(\overline{\Omega})$ for $\mu_{j}\rightarrow 0$. So we have 
\begin{equation}\label{nscp33}
	\frac{1}{p} \int_{\mathbb{R}^N \times \mathbb{R}^N} \dfrac{|u(x)-u(y)|^{p}}{|x-y|^{N+sp}}\,dx\,dy +\int_{\Omega}\left( -\frac{\lambda u^{p}_{+}}{p}-\frac{u^{p^{*}_{s}}_{+}}{p^{*}_{s}}\right)\,dx  \ge c_{0}
\end{equation}
and $u$ weakly satisfy \eqref{nscp6} with $\mu=0$.
  From mountain pass Lemma \ref{nscpl2} we have $c_{0}>0.$ Since $u$ satisfy \eqref{nscp33}, this implies that $u$ can not be identically zero. For each $x\in \Omega, \lambda u^{p-1}_{+}(x)+u_{+}^{p^{*}_{s}-1}(x) \ge 0$ and using strong minimum principle and Hopf's lemma \cite{HDLQ}, 
we have
\begin{equation*}
	u(x)\ge c d^{s}(x)>0\thickspace \text{in}\thickspace \Omega.	
\end{equation*}
Now upto a subsequence $u_{\mu_{j}}\rightarrow u$ in $C_{d}^{0,\alpha}(\overline{\Omega})$ for some $\alpha \in (0,s).$ This implies that $u_{\mu_{j}}>0$ in $\Omega$ for sufficiently large $j$. So we conclude that for small $\mu,\thinspace u_{\mu}>0 $ is solution of the problem \eqref{nscp3} with $DE_{\mu}(u_{\mu}) =0$ and $E_{\mu}(u_{\mu})=c_{\mu}$ where $c_{\mu}$ is as given in Lemma \ref{nscpl2} viz.,
\begin{equation*}
	c_{\mu}:=\inf_{\gamma \in \Gamma}	\max_{0\leq t\leq 1} E_{\mu}(\gamma(t))
\end{equation*}
where $\Gamma=\left\lbrace \gamma\in C([0,1],D^{s,p}_{0}(\Omega)):\gamma(0)=0,\gamma(1)=Ru_{\epsilon,\delta} \right\rbrace$. Now we can show that $u_{\mu}$ is ground state solution by following the ideas of \cite[Theorem 1.1]{PSS}. This completes the proof of the Theorem \ref{nscpt1}.\hfill\qed.


\begin{appendices}
\section{}
We first state some of the well known auxiliary results.\\
\underline{\textbf{Auxiliary results}}
\begin{lemma}
\label{lemC1}
Let $1<p<\infty$ and $\ba\geq1.$	For every $a,b,t\geq0,$ it holds that $$|a-b|^{p-2}(a-b)(a_t^{\ba}-b_t^{\ba})
\geq \frac{{\ba}p^p}{({\ba}+p-1)^p}\left|a_t^{\frac{{\ba}+p-1}{p}}-b_t^{\frac{{\ba}+p-1}{p}}\right|^p,$$
where $a_t=\min\{a,t\}$ and $b_t=\min\{b,t\}.$
\end{lemma}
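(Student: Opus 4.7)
By the symmetry $(a,b)\leftrightarrow(b,a)$, which flips the sign of $a-b$ in both factors on the left and leaves the right‑hand side invariant, I would assume without loss of generality that $a\geq b\geq 0$. Then $a_t\geq b_t$, the quantity $|a-b|^{p-2}(a-b)$ reduces to $(a-b)^{p-1}$, and both sides of the desired inequality are nonnegative. I would then split into three cases according to the position of $a,b$ relative to $t$.

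\textbf{Case 1: $a,b\leq t$.} Here $a_t=a$ and $b_t=b$, so the inequality becomes the classical pointwise estimate
\[
(a-b)^{p-1}\bigl(a^{\beta}-b^{\beta}\bigr)\ \geq\ \frac{\beta p^p}{(\beta+p-1)^p}\,\bigl(a^{(\beta+p-1)/p}-b^{(\beta+p-1)/p}\bigr)^p.
\]
This is the heart of the lemma. My plan is to write both differences as integrals,
\[
a^{\beta}-b^{\beta}=\int_b^a \beta\,s^{\beta-1}\,ds,\qquad a^{(\beta+p-1)/p}-b^{(\beta+p-1)/p}=\int_b^a \frac{\beta+p-1}{p}\,s^{(\beta-1)/p}\,ds,
\]
and then apply Jensen's inequality to the concave function $x\mapsto x^{1/p}$ on the interval $[b,a]$ with Lebesgue measure normalised by $(a-b)$, which yields
\[
\Bigl(\tfrac{1}{a-b}\!\int_b^a s^{(\beta-1)/p}ds\Bigr)^{p}\leq \tfrac{1}{a-b}\!\int_b^a s^{\beta-1}ds.
\]
Multiplying through by $(a-b)^{p}$ and the constant $((\beta+p-1)/p)^{p}$ gives exactly the required inequality after rearrangement using the identity $\int_b^a s^{\beta-1}ds=(a^{\beta}-b^{\beta})/\beta$.

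\textbf{Case 2: $b\leq t< a$.} Here $a_t=t$ and $b_t=b$. Since $a\geq t$ implies $(a-b)^{p-1}\geq(t-b)^{p-1}$, I would bound
\[
(a-b)^{p-1}(t^{\beta}-b^{\beta})\ \geq\ (t-b)^{p-1}(t^{\beta}-b^{\beta})
\]
and then apply Case 1 to the pair $(t,b)$ to finish. \textbf{Case 3: $a,b\geq t$.} Then $a_t=b_t=t$ and the right‑hand side vanishes while the left‑hand side is nonnegative, so there is nothing to prove.

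The main technical step is clearly Case 1, and the subtlety I expect is choosing the right convexity inequality: a naïve mean‑value argument fails because it produces $\xi^{\beta-1}$ and $\eta^{\beta-1}$ with uncontrolled intermediate points, so one really needs the Jensen/Hölder step applied to the integral representations with the correct pairing of exponents. Once that is done, Cases 2 and 3 are immediate reductions.
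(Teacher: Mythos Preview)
Your proof is correct. The paper does not actually prove Lemma \ref{lemC1}: it is listed in the appendix under ``well known auxiliary results'' and stated without argument, so there is no paper proof to compare against. Your Jensen approach for the untruncated inequality (Case 1) is a standard route to this estimate, and the reductions in Cases 2 and 3 are the natural way to absorb the truncation; nothing further is needed.
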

\begin{lemma}
\label{lem2}
Let $1<p<\infty$ and $\phi:\RR\to\RR$ be a  differentiable convex function. Then
\begin{align*}
&|a - b|^{p-2} (a - b)\left[
c~|\phi'(a)|^{p-2} \phi'(a) - t ~|\phi'(b)|^{p-2} \phi'(b)\right]\nonumber\\
&\hspace{6cm}\geq |\phi(a) - \phi(b)|^{p-2} (\phi(a)-\phi(b)) (c - t),
\end{align*}
for every $a, b \in \RR$ and every $c,t \geq 0.$
\end{lemma}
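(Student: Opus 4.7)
The plan is to reduce the stated inequality to two elementary convexity inequalities for $\phi$, using the algebraic and monotonicity properties of the map $g(x):=|x|^{p-2}x$.

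First I would record three properties of $g$ on $\RR$ (all valid for $p>1$): (a) $g$ is continuous and strictly increasing, since $g'(x)=(p-1)|x|^{p-2}\ge 0$ with equality only at $0$; (b) $g$ is odd; and (c) $g$ is multiplicative in the sense that
\[
g(xy)=|xy|^{p-2}(xy)=|x|^{p-2}|y|^{p-2}\,xy=g(x)\,g(y)\qquad \text{for all } x,y\in\RR.
\]
Property (c) is the key observation, because it lets one rewrite
\[
|a-b|^{p-2}(a-b)\,|\phi'(a)|^{p-2}\phi'(a)=g(a-b)\,g(\phi'(a))=g\bigl((a-b)\phi'(a)\bigr),
\]
and similarly with $\phi'(b)$ in place of $\phi'(a)$. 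Thus the desired inequality becomes
\[
c\,g\bigl((a-b)\phi'(a)\bigr)-t\,g\bigl((a-b)\phi'(b)\bigr)\;\ge\;(c-t)\,g\bigl(\phi(a)-\phi(b)\bigr).
\]

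Next I would invoke convexity of $\phi$ in its standard tangent-line form: for every $a,b\in\RR$,
\begin{equation*}
\phi'(b)(a-b)\;\le\;\phi(a)-\phi(b)\;\le\;\phi'(a)(a-b).
\end{equation*}
Applying the monotone increasing function $g$ from (a) preserves both inequalities, and using (c) to factor the outer arguments gives
\begin{equation*}
g(a-b)\,g(\phi'(b))\;\le\;g\bigl(\phi(a)-\phi(b)\bigr)\;\le\;g(a-b)\,g(\phi'(a)).
\end{equation*}

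Finally, since $c\ge 0$ and $t\ge 0$, I multiply the right-hand inequality by $c$ and the left-hand inequality by $t$ (preserving their directions), then subtract the second from the first:
\[
c\,g(a-b)\,g(\phi'(a))-t\,g(a-b)\,g(\phi'(b))\;\ge\;(c-t)\,g\bigl(\phi(a)-\phi(b)\bigr),
\]
which is precisely the claimed bound after substituting back $g(x)=|x|^{p-2}x$. There is no real obstacle here: the whole argument is algebraic once one spots the multiplicativity identity $g(xy)=g(x)g(y)$, which collapses the mixed product $|a-b|^{p-2}(a-b)\,|\phi'(\cdot)|^{p-2}\phi'(\cdot)$ into a single $g$ evaluated at $(a-b)\phi'(\cdot)$; from there, monotonicity of $g$ and convexity of $\phi$ do all the work, and the sign conditions $c,t\ge 0$ ensure the linear combination has the right direction.
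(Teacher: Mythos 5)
Your proof is correct. Note that the paper itself states this lemma as a well-known auxiliary result and gives no proof, so there is nothing to compare against; your argument (rewrite both sides through $g(x)=|x|^{p-2}x$, use the multiplicativity $g(xy)=g(x)g(y)$ and monotonicity of $g$, then apply the two tangent-line inequalities $\phi'(b)(a-b)\le \phi(a)-\phi(b)\le \phi'(a)(a-b)$ and combine with the signs of $c,t$) is exactly the standard proof found in the literature on the fractional $p$-Laplacian. The only cosmetic points: for $1<p<2$ the justification ``$g'(x)=(p-1)|x|^{p-2}\ge 0$ with equality only at $0$'' should be replaced by the observation that $g$ is still strictly increasing (its derivative is positive off the origin and $g$ is continuous), and one should read $|x|^{p-2}x$ as $0$ at $x=0$, consistent with the notational convention already adopted in the paper; neither affects the argument.
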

Now, we state the regularity result and prove it.
\begin{theorem}
\label{anscpt3}
Let $\Omega\subset\RR^N, N\geq2$ be a bounded domain and 
$u\in 
D^{s,p}_{0}(\Omega)$ weakly solve $(-\Delta)_{p}^s u =f_\mu( x,u)$ in $\Omega$ for a parameter $\mu>0$ and 
$f$ satisfying the following growth condition
\begin{equation}\label{anscp29}
			|f_\mu(x,t)|\leq C_{1}\left( 1+|t|^{p^{*}_{s}-1}\right) ,
\end{equation}
where $C_{1}>0$ is constant. If $u$ is uniformly bounded in $D^{s,p}_{0}(\Omega)$ and $ \int_{E} |u|^{p^{*}_{s}} \,dx \rightarrow 0 \thickspace \text{as} \thickspace |E|\rightarrow 0,\thickspace \text{uniformly in} \thickspace \mu$ then 
there exist two positive constants $C_*$ and $ C^*$ depending upon $s,p,N,\Om$ such that

\begin{equation*}
    \begin{aligned}
          \|u\|_{L^{\infty}(\Om)}\leq \left(C_*\right)^{\DD\frac{1}{p_{s}^*-p}}\left(C^*\right)^{\DD\frac{p-1}{\sqrt {p}(\sqrt{p_{s}^*}-\sqrt {p})}}\|u\|_{L^{  p_s^*}(\Om)}.
    \end{aligned}
\end{equation*}
that is, $\|u\|_{L^\infty(\Omega)}$are uniformly bounded.


\end{theorem}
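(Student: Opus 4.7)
The plan is to establish an $L^\infty$ bound via a Moser-type iteration combined with a Brezis--Kato absorption argument that exploits the equi-integrability hypothesis to tame the critical growth. First I would fix a truncation level $M>0$ and define $u_M := \min\{u_+, M\}$. For a parameter $\beta \geq 1$ (to be iterated later), I would test the weak formulation of $(-\Delta)_p^s u = f_\mu(x,u)$ with $\phi = u\, u_M^{\beta(p-1)} \in D_0^{s,p}(\Omega)$. Applying \autoref{lemC1} with the appropriate choice of exponent controls the double integral on the left by the Gagliardo seminorm of $w_M := u\, u_M^{\beta - 1}$ (up to a factor $C(\beta) \sim (\beta(p-1)+p)^p / (\beta p^p)$), so that after invoking the fractional Sobolev embedding with constant $S_{s,p}$ from \eqref{nscp7},
\[
\|w_M\|_{L^{p_s^*}(\Omega)}^p \;\leq\; C(\beta)\, S_{s,p}^{-1} \int_\Omega f_\mu(x,u)\, u\, u_M^{\beta(p-1)}\, dx.
\]

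Next I would estimate the right-hand side using the growth bound $|f_\mu(x,t)| \leq C_1(1 + |t|^{p_s^*-1})$. The critical part, after inserting $|u|^{p_s^*-p}$, reads
\[
\int_\Omega |u|^{p_s^*-p}\, |w_M|^p\, dx \;\leq\; \left(\int_{\{|u|>A\}} |u|^{p_s^*}\, dx\right)^{\!(p_s^*-p)/p_s^*} \|w_M\|_{L^{p_s^*}(\Omega)}^p + A^{p_s^*-p}\, \|w_M\|_{L^p(\Omega)}^p,
\]
via Hölder with exponents $p_s^*/(p_s^*-p)$ and $p_s^*/p$, after splitting $\Omega$ into $\{|u|\leq A\} \cup \{|u|>A\}$. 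The equi-integrability hypothesis lets me choose $A = A(C_1, S_{s,p}, \beta)$ so large, uniformly in $\mu$, that the first factor is smaller than $\tfrac{1}{2} C(\beta)^{-1} S_{s,p}$. This produces the crucial absorption into the left-hand side and leaves a Moser-type inequality
\[
\|w_M\|_{L^{p_s^*}(\Omega)}^p \;\leq\; C'(\beta)\, \bigl(1 + \|u\|_{L^{\beta(p-1)+p}(\Omega)}^{\beta(p-1)+p}\bigr).
\]

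I would then let $M \to \infty$ using Fatou's lemma, which upgrades the estimate to $\|u\|_{L^{\gamma p_s^*/p}(\Omega)}$ bounded in terms of $\|u\|_{L^\gamma(\Omega)}$ for $\gamma = \beta(p-1)+p$, and iterate along the sequence $\gamma_k = (p_s^*/p)^k\, p_s^*$. Tracking the constants $C'(\beta_k)$ and taking the product, a geometric series $\sum_k (p/p_s^*)^k$ yields an exponent of the form $\tfrac{1}{p_s^*-p}$ on one factor, while a second geometric sum $\sum_k k (p/p_s^*)^{k/2}$ (arising from the $\beta$-dependence $C'(\beta_k) \sim \beta_k^{p}$) produces the exponent $\tfrac{p-1}{\sqrt{p}(\sqrt{p_s^*}-\sqrt{p})}$ through the identity $\sum_{k\ge 0} k\, r^k = r/(1-r)^2$ applied with $r = (p/p_s^*)^{1/2}$. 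Passing to the limit $k \to \infty$ by Fatou gives the claimed bound.

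The main obstacle will be the precise bookkeeping in the absorption step: the constant $C'(\beta)$ must be controlled so that choosing $A = A(\beta)$ does not degrade the iteration. The delicate point is that, at each step $k$, the level $A_k$ depends on $\beta_k$, so I must verify that the resulting tail $\int_{\{|u|>A_k\}} |u|^{p_s^*}\, dx$ is still small uniformly in $\mu$ — this is exactly where the hypothesis $\int_E |u|^{p_s^*}\, dx \to 0$ uniformly in $\mu$ as $|E| \to 0$, together with the uniform $D_0^{s,p}$ bound (which controls $\||u| > A_k|$ via Chebyshev and Sobolev embedding), is indispensable. Once this uniformity is secured, the iteration closes and produces the stated constants with the indicated exponents.
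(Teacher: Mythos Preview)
Your proposal is essentially correct and follows the same Moser iteration with Brezis--Kato absorption that the paper uses: test with a truncated power, split $\Omega$ into $\{|u|\leq A\}\cup\{|u|>A\}$, use H\"older on the large set, absorb the critical term by choosing $A$ via the uniform equi-integrability hypothesis, and iterate along $\gamma_k=(p_s^*/p)^k p_s^*$. Your tracking of the two geometric sums that yield the exponents $\tfrac{1}{p_s^*-p}$ and $\tfrac{p-1}{\sqrt{p}(\sqrt{p_s^*}-\sqrt{p})}$ matches the paper's computation, and your identification of the delicate point (the absorption level $A$ depends on the iteration index $\beta_k$, so one must check the tail is small uniformly in $\mu$ via Chebyshev plus the uniform $D_0^{s,p}$ bound) is exactly the issue the paper handles.

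One small technical difference: you test directly with $u\,u_M^{\beta(p-1)}$, $u_M=\min\{u_+,M\}$, and invoke \autoref{lemC1}. That lemma is stated for $a,b\geq 0$, so it does not apply cleanly when $u$ changes sign. The paper inserts a preliminary step to avoid this: it tests with $\psi|h_\varepsilon'(u)|^{p-2}h_\varepsilon'(u)$ for $h_\varepsilon(t)=(\varepsilon^2+t^2)^{1/2}$, applies \autoref{lem2} (the convex-function inequality), and lets $\varepsilon\to 0$ to obtain a subsolution-type inequality for $|u|$. Only then does it take $\psi=u_l^\beta$ with $u_l=\min\{l,|u|\}$ and invoke \autoref{lemC1}, now legitimately since $|u|\geq 0$. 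Your route can be repaired either by this reduction or by handling the mixed-sign cross terms directly, but as written the appeal to \autoref{lemC1} needs that extra justification.
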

\begin{proof}
For every $0 < \e <<1, $
we define the smooth convex Lipschitz function $$h_\e(t)=(\e^2+t^2)^{\frac{1}{2}}$$ and take $\psi|h_\e'(u)|^{p-2}h_\e'(u)$ as the test function
in weak formulation, where $\psi\in C_c^\infty(\Om), \psi>0.$ 
By Lemma \ref{lem2}, we obtain
		\begin{align}\label{1}
			&\int_{{\RR^N}}\int_{\RR^N}\frac{|
				h_\e(u(x))-h_\e(u(y))|^{p-2}(h_\e(u(x))-h_\e(u(y)))(\psi(x)-\psi(y))}{|
				x-y|^{N+sp}}dxdy\n&\hspace{5.7cm}\leq\int_{\Om}|f(x,u)|~ |h_\e'(u(x))|^{p-1}\psi(x)\;dx.
		\end{align}
		Since $h_\e(t)$ converges to $h(t)=|t|$ as $\e\to 0^+$ and $|h_\e'(t)|\leq 1$, 
We note that, using Young's inequality and Lipschitz continuity of $h_\epsilon,$
  \begin{align*}
			&\frac{|
				h_\e(u(x))-h_\e(u(y))|^{p-2}(h_\e(u(x))-h_\e(u(y)))(\psi(x)-\psi(y))}{|
				x-y|^{N+sp}}\\
    &\hspace{4cm}\geq -\frac{|h_\e(u(x))-h_\e(u(y))|^{p-1}|\psi(x)-\psi(y)|}{|x-y|^{N+sp}}\\
    &\hspace{4cm}\geq -\frac{(p-1)}{p}\frac{|h_\e(u(x))-h_\e(u(y))|^{p}}{|x-y|^{N+sp}}
    -\frac{1}{p}\frac{|\psi(x)-\psi(y)|}{|x-y|^{N+sp}}~~~\\
    &\hspace{5.4cm}\geq -\frac{(p-1)}{p}\frac{|u(x)-u(y)|^{p}}{|x-y|^{N+sp}}
    -\frac{1}{p}\frac{|\psi(x)-\psi(y)|}{|x-y|^{N+sp}}~~~
		\end{align*}
 By using generalized Fatou's lemma in \eqref{1}, we get
		\begin{align}\label{2}
			&\int_{{\RR^N}}\int_{\RR^N}\frac{\Big|
				|u(x)|-|u(y)|\Big|^{p-2}\Big(|u(x)|-|u(y)|\Big)\Big(\psi(x)-\psi(y)\Big)}{|
				x-y|^{N+sp}}dxdy\n&\hspace{7.4cm}\leq\int_{\Om}\;|f(x,u)|\psi(x)\;dx.
		\end{align}  Since $C_c^\infty(\Om)$ is dense in $D_0^{s,p}(\Om),$ \eqref{2} holds true, for $0\leq\psi\in D_0^{s,p}(\Om).$ Next, for $l\in\mathbb N$, we define $u_l=\min\{l,|u(x)|\}.$ Clearly $u_l\in D_0^{s,p}(\Om).$ For $k\geq1,$ let us set
		$\beta:=kp-p+1.$ So, $\beta>1.$  Choosing $\psi=u_l^{\ba }$ in \eqref{2} and using Lemma \ref{lemC1}, we obtain
		\begin{align}\label{3}
			\frac{\beta p^p}{(\ba+p-1)^p}\int_{\RR^N}\int_{\RR^N}&\frac{\left|(u_l(x))^{\frac{\ba+p-1}{p}}-(u_l(y))^{\frac{\ba+p-1}{p}}\right|^p}{|x-y|^{N+sp}}dxdy\n
			&\hspace{3.4cm}\leq\int_{\Om}\;|f(x,u)|(u_l(x))^{\ba}\;dx.
		\end{align}
		By observing  that   $$\frac{1}{\beta}\left({\frac{\ba+p-1}{p}}\right)^p\leq \left({\frac{\ba+p-1}{p}}\right)^{p-1}, 
		\text{~~for large~} \ba$$ and using the  the relation $k=\frac{\ba+p-1}{p}$ along with continuous embedding $D_0^{s,p}(\Om)\hookrightarrow L^{p_s^*}(\Om)$, from \eqref{3}, we get
		\begin{align}\label{4}
			\left\|u_l^{k}\right\|^p_{L^{p_s^*}(\Om)}\leq \frac {(k)^{p-1}}{ S_{s,p}^p}\int_{\Om}\;|f(x,u)|(u_l(x))^{\ba}\;dx,
		\end{align}
		where the best Sobolev constant $S_{s,p}$.
		Now we will estimate the right-hand side  in \eqref{4}. Using the fact $u_l\leq |u|,$ we deduce
		\begin{align}\label{5.0.0}
		    &	\int_{\Om}\;|f(x,u)|(u_l(x))^{\ba}~dx
				\leq C_1\left(\int_\Om|u_l|^\beta\, dx+\int_\Om|u|^{p_{s}^*-1} |u_l^{\ba}|\,dx\right)\nonumber
				\\&
			=C_1\Bigg[\int_{\Om\cap\{|u|<\Lambda\}}|u_l|^{\beta} dx+\int_{\Om\cap\{|u|\geq\Lambda\}}|u_l|^{\beta} dx
			+\int_{\Om\cap\{|u|<\Lambda\}}|u|^{p_{s}^*-1} |u_l^{\ba}|\,dx\nonumber
			\\&\hspace{9cm}
			+\int_{\Om\cap\{|u|\geq\Lambda\}}|u|^{p_{s}^*-1} |u_l^{\ba}|\,dx \Bigg]\nonumber
			\\
			&\leq C_1\Bigg[\int_{\Om\cap\{|u|<\Lambda\}}|u|^{\beta}\,dx+\int_{\Om\cap\{|u|\geq\Lambda\}}|u|^{p_{s}^*+\beta-1}\,dx
			+\int_{\Om\cap\{|u|<\Lambda\}}|u|^{p_{s}^*+\beta-1} \,dx\nonumber
			\\&\hspace{9.2cm}
			+\int_{\Om\cap\{|u|\geq\Lambda\}}|u|^{p_{s}^*+\beta-1} \,dx\Bigg]
			\n&\leq C_1\Bigg[\Lambda^{1-p}\int_{\Om\cap\{|u|<\Lambda\}}|u|^{p+\beta-1} \,dx+\Lambda^{p_{s}^*-p}\int_{\Om\cap\{|u|<\Lambda\}} |u|^{p+\beta-1}\,dx\nonumber
			\n&\hspace{7.8cm}+2\int_{\Om\cap\{|u|\geq\Lambda\}}|u|^{p_{s}^*-p}\; |u|^{p+\beta-1}\,dx\Bigg]
			\n&\hspace{0.5cm}
			\leq C_1\Big[(\Lambda^{1-p} +\Lambda^{p_{s}^*-p})\;\|u\|_{L^{kp}(\Om)}^{kp}+2\int_{\Om\cap\{|u|\geq\Lambda\}}|u|^{p_{s}^*-p}\; |u|^{p+\beta-1}\,dx\Big],
		\end{align}
where $\Lambda>1$ will be chosen later, $C_1>0$ is a constant. By plugging  \eqref{5.0.0} into \eqref{4} and applying Fatou's lemma, we get
		\begin{align}\label{6}
			\|u\|_{L^{k p_s^*}(\Om)}^{k p}
			&\leq C_{1} \frac {k^{p-1}}{ (S_{s,p})^p}\Bigg[(\Lambda^{1-p} +\Lambda^{p_{s}^*-p})\;\|u\|_{L^{kp}(\Om)}^{kp}\nonumber
			\\&\hspace{4.2cm}
			+2\int_{\Om\cap\{|u|\geq\Lambda\}}|u|^{p_{s}^*-p}\; |u|^{p+\beta-1}\,dx\Bigg].
		\end{align}
		Now we estimate the second term on the right-hand side in \eqref{6}. For this, using H\"older inequality for constant exponents, we obtain
		\begin{align}	
			\int_{\Om\cap\{|u|\geq\Lambda\}}|u|^{p_{s}^*-p}\; |u|^{kp}\,dx\leq& C_2\left(\int_{\Om\cap\{|u|\geq\Lambda\}}|u|^{p_s^*} dx\right)^{\frac{p_{s}^*-p}{p_s^*}}\left(\int_{\Om}|u|^{k p_s^*} dx\right)^{\frac{p}{p_s^*}}
			\n&\hspace{3cm}
			=C(\Lambda,u)
			\|u\|_{L^{k p_s^*}(\Om)}^{kp},	
		\end{align}
		where $C_2>0$ is a constant and $C(\Lambda,u)=C_2\left(\int_{\Om\cap\{|u|\geq\Lambda\}}|u|^{p_s^*} dx\right)^{\frac{p_{s}^*-p}{p_s^*}}.$ Using uniform boundedness and uniform integrability of $u$ , $C(\La,u)$ only depends on $u$ for $\La$ large so we write this as $C(\La)$ and get
		\begin{equation}
		\label{7}
		    \begin{aligned}
		        \int_{\Om\cap\{|u|\geq\Lambda\}}|u|^{p_{s}^*-p}\; |u|^{kp}\,dx \leq C(\Lambda)
			\|u\|_{L^{k p_s^*}(\Om)}^{kp}  
		    \end{aligned}
		\end{equation}
		Combining \eqref{6} and \eqref{7}, we have
		\begin{align}\label{8}
			\|u\|_{L^{k p_s^*}(\Om)}^{k p}\leq C_{1} \frac {k^{p-1}}{ (S_{s,p})^p}\left[(\Lambda^{1-p}+\Lambda^{p_{s}^*-p} )\|u\|_{L^{kp}(\Om)}^{kp}
			+ 2C(\Lambda)
			\|u\|_{L^{k p_s^*}(\Om)}^{k p}\right].
		\end{align} 
		Now by applying Lebesgue dominated convergence theorem in \eqref{7}, we  can choose $\Lambda>1$ large enough such that   $\DD C(\Lambda)< \frac{ (S_{s,p})^p} {4\tilde{C}(k)^{p-1}}$ and
		hence, from \eqref{8}, it follows that
		\begin{align}\label{9}
			\|u\|_{L^{k p_s^*}(\Om)}\leq \left(C_*^{\frac{1}{k}}\right)^{\frac{1}{p}} (k^{\frac{1}{k}})^{\frac{p-1}{p}}\|u\|_{L^{kp}(\Om)},
		\end{align}
		where $\DD C_*= \frac {2 C_{1}~(\Lambda^{1-p}+\Lambda^{p_{\mu,s}^*-p})}{ (S_s)^p}>1.$ Next, we start bootstrap argument on \eqref{9}.\\
			Choose  $k=k_1:=\frac{p_{s}^*}{p}>1$ as the first iteration. Thus,  \eqref{9} yields that
		\begin{align}\label{it1}
			\|u\|_{L^{k{_1} p_s^*}(\Om)}\leq \left(C_*^{\frac{1}{k{_1}}}\right)^{\frac{1}{p}} (k_{1}^{\frac{1}{k_{1}}})^{\frac{p-1}{p}}\|u\|_{L^{ p_s^*}(\Om)}.
		\end{align}
		Again by taking $k=k_2:=k_1 \frac{p_{s}^*}{p}$ as the second iteration in \eqref{9} and then inserting   \eqref{it1} in \eqref{9}, we get
		\begin{align}\label{it2}
			\|u\|_{L^{k_2 p_s^*}(\Om)}&\leq \left(C_*^{\frac{1}{k_2}}\right)^{\frac{1}{p}} \left[(k_2)^{\frac{1}{k_2}}\right]^{\frac{p-1}{p}}\|u\|_{L^{ k_1 p_s^*}(\Om)}\n
			&\leq \left(C_*^{\frac{1}{k_1}+\frac{1}{k_2}}\right)^{\frac{1}{p}} \left[(k_1)^{\frac{1}{k_1}}.(k_2)^{\frac{1}{k_2}}\right]^{\frac{p-1}{p}}\|u\|_{L^{  p_s^*}(\Om)}.
		\end{align}
		In this fashion,  taking $k=k_n:=k_{n-1}\frac{p_{s}^*}{p}$ as the $n^{th}$ iteration and iterating for $n$ times, we obtain
		\begin{align}\label{itn}
			\|u\|_{L^{k_n p_s^*}(\Om)}&\leq \left(C_*^{\frac{1}{k_n}}\right)^{\frac{1}{p}} \left[(k_n)^{\frac{1}{k_n}}\right]^{\frac{p-1}{p}}\|u\|_{L^{ k_{n-1} p_s^*}(\Om)}\n
			&\leq \left(C_*^{\DD\frac{1}{k_1}+\frac{1}{k_2}\cdots +\frac{1}{k_n}}\right)^{\frac{1}{p}} \left[(k_1)^{\DD\frac{1}{k_1}}.(k_2)^{\DD\frac{1}{k_2}}\cdots (k_n)^{\DD\frac{1}{k_n}}\right]^{\frac{p-1}{p}}\|u\|_{L^{  p_s^*}(\Om)}\n
			&=\left(C_*^{\DD\sum_{j=1}^{n}{\frac{1}{k_j}}}\right)^{\frac{1}{p}}\left(\prod_{j=1}^{n}\left(k_j^{\sqrt{1/{k_j}}}\right)^{\sqrt{1/{k_j}}}\right)^{\frac{p-1}{p}}\|u\|_{L^{  p_s^*}(\Om)},
		\end{align}
		where $k_j=\left(\frac{p_{s}^*}{p}\right)^j.$ Since $\frac{p_{s}^*}{p}>1,$ we have $k_j^{\DD\sqrt{1/{k_j}}}>1,$ for all $j\in\mathbb N$ and
		$$\lim_{j\to+\infty}k_j^{\DD\sqrt{1/{k_j}}}=1.$$
		Hence, it follows that there exists a constant $C^*>1,$ independent of $j,n\in\mathbb N$ such that $k_j^{\DD\sqrt{1/{k_j}}}<C^*$ and thus, \eqref{itn} gives
		\begin{align}\label{10}
			\|u\|_{L^{k_n p_s^*}(\Om)}&\leq \left(C_*^{\DD\sum_{j=1}^{n}{\frac{1}{k_j}}}\right)^{\frac{1}{p}}\left({C^*}^{\DD\sum_{j=1}^{n}\sqrt{1/{k_j}}}\right)^{\frac{p-1}{p}}\|u\|_{L^{  p_s^*}(\Om)}.
		\end{align}
		Observe that
		$$\sum_{j=1}^{\infty}{\frac{1}{k_j}}=\sum_{j=1}^{n}\left({\frac{p}{p_{s}^*}}\right)^j=\frac{p/p_{s}^*}{1-p/p_{s}^*}=\frac{p}{p_{s}^*-p}$$ and
		$$\sum_{j=1}^{\infty}{\frac{1}{\sqrt{k_j}}}=\sum_{j=1}^{n}{\left(\sqrt{\frac{p}{p_{s}^*}}\right)^{j}}=\frac{\sqrt p}{\sqrt{ p_{s}^*}-\sqrt p},$$
		from  \eqref{10}, we get that
		\begin{align}\label{contr}
			\|u\|_{L^{\nu_n}(\Om)}\leq \left(C_*\right)^{\DD\frac{1}{p_{s}^*-p}}\left(C^*\right)^{\DD\frac{p-1}{\sqrt {p}(\sqrt{p_{s}^*}-\sqrt {p})}}\|u\|_{L^{  p_s^*}(\Om)},
		\end{align} where $\nu_n:=k_n p_s^*.$ Note that, $\nu_n\to+\infty$ as $n\to+\infty.$ Therefore, we claim that  
		\begin{align}\label{claim}
			\|u\|_{L^{\infty}(\Om)}=\|u\|_{L^{\infty}(\Om)}\leq \left(C_*\right)^{\DD\frac{1}{p_{s}^*-p}}\left(C^*\right)^{\DD\frac{p-1}{\sqrt {p}(\sqrt{p_{s}^*}-\sqrt {p})}}\|u\|_{L^{  p_s^*}(\Om)}.
		\end{align}
		Indeed, if not, let us assume $\|u\|_{L^{\infty}(\Om)}>C_3\|u\|_{L^{  p_s^*}(\Om)}, $ where $$C_3=\left(C_*\right)^{\DD\frac{1}{p_{s}^*-p}} \left(C^*\right)^{\DD\frac{p-1}{\sqrt {p}(\sqrt{p_{s}^*}-\sqrt {p})}}.$$
		Then there exists $C_4>0$ and a subset $\mathcal{S}$ of $\Om$ with $\mathcal{|S|}>0$  such that 
		$$u(x)>C_3\|u\|_{L^{  p_s^*}(\Om)}+C_4,\text{~~for } x\in\mathcal{S}.$$ The above implies
		\begin{align*}
			\DD\liminf_{\nu_n\to+\infty}\left(\int_{\Om}|u(x)|^{\nu_n}dx\right)^{\frac{1}{\nu_n}}
			&\geq \DD\liminf_{\nu_n\to+\infty}\left(\int_{\mathcal{S}}|u(x)|^{\nu_n}dx\right)^{\frac{1}{\nu_n}}\\&\geq\DD\liminf_{\nu_n\to+\infty}\left(C_3\|u\|_{L^{  p_s^*}(\Om)}+C_4\right)\left(\mathcal{|S|}\right)^{\frac{1}{\nu_n}}\\
			&=\DD\liminf_{\nu_n\to+\infty}\left(C_3\|u\|_{L^{  p_s^*}(\Om)}+C_4\right),
		\end{align*}
		a contradiction to \eqref{contr}. Therefore, \eqref{claim} holds and hence, $u\in L^{\infty}(\Om).$ 
\end{proof}
\end{appendices}

 \bibliographystyle{plain}
	\bibliography{ref_utt}

	\end{document}